\numberwithin{equation}{section}
\newcommand{\Red}{\textcolor{red}}
\newcommand{\bea}{\begin{eqnarray}}
\newcommand{\eea}{\end{eqnarray}}
\def\beaa{\begin{eqnarray*}}
\def\eeaa{\end{eqnarray*}}
\def\ba{\begin{array}}
\def\ea{\end{array}}
\def\be#1{\begin{equation} \label{#1}}
\def \eeq{\end{equation}}
\def\bsplit{\begin{split}}
\def\lab{\label}
\def\lapp{\slashed{\lap}}
\def\nabb{\slashed\nab}
\def\a{{\alpha}}
\def\b{{\beta}}
\def\be{{\beta}}
\def\Ga{\Gamma}
\def\de{\delta}
\def\ep{\epsilon}
\def\la{\lambda}
\def\si{\sigma}
\def\Si{\Sigma}
\def\om{\omega}
\def\nab{\nabla}
\def\pr{{\partial}}
\def\les{\lesssim}
\def\c{\cdot}
\def\lap{{\triangle}}
\def\R{\mathbb{R}}
\def\Z{\mathbb{Z}}
\def\m{{\bf m}}
\def\p{{\partial}}
\def\SSS{{\mathbb S}}
\def\RRR{{\mathbb R}}
\def\be{\begin{equation}}
\def\ee{\end{equation}}
\def\vo{\vspace{1\baselineskip}}
\def\Div{\mbox{Div}}
\def\Sext{\,^{(ext)}\Si}
\newtheorem{theorem}{Theorem}[section]
\newtheorem{lemma}{Lemma}[section]
\theoremstyle{definition}
\theoremstyle{remark}
\newtheorem{remark}{Remark}[section]
 \numberwithin{equation}{section}
\title{Decay estimates of wave equations in un-isotropic media}
\author[$\dag$]{Sergiu Klainerman  }
\author[$\star$]{Xuecheng Wang }
\affil[$\dag$]{\small Princeton University}
\affil[$\star$]{\small Tsinghua University \& BIMSA}
 \date{}
\begin{document}

 \maketitle 

\begin{abstract}
We prove decay estimates  for solutions to   non-isotropic  linear  systems of  wave equations.
The defining feature of these estimates  is that they depend only on  the commutation properties of the system with  the scaling vector field. As  application we  give two  surprisingly simple proofs for   small data global regularity results  non-isotropic   systems of wave  equations  in $\RRR^{1+3}$  with cubic semilinear nonlinearities.  We hope that the techniques presented here are  relevant for the more difficult and important  case   of biaxial  refraction in crystal optics.

\end{abstract}

\setcounter{tocdepth}{2}

\tableofcontents

 \section{Introduction}
Comparing with the rich literature in the study of   nonlinear wave propagation in isotropic media, see  for example  \cite{Joh81,JohKla84, Kla80,Kla82,Kla83,Kl85,Kla86,Klainerman15,KlaPon83,KlaSid96,PusSha13,Sideris, And19} as a short sample\footnote{The list would  be far longer if we  were also  to include    the  relevant  literature
 on  nonlinear field equations   and general relativity.},  the literature on   anisotropic systems of nonlinear waves is   far  sparser.   A notable  exception is    the  work  on global existence for the nonlinear equations
  of crystal optics\footnote{  Further results concerning Strichartz-type estimates in the context of  chrystal optics  can be found in \cite{Schippa}.}   due to  O. Liess \cite{Lie89, Lie91}.  Other  recent   results   are due to  \cite{And22}
and \cite{Wang23}, \Red{} As it is well known,  the proof of  long time    existence of solutions to nonlinear wave  equations
 requires   decay estimates for the  linearized system,   strong  enough  to control the nonlinear terms.  We have  two very  different   methods to derive decay, one based  on  Fourier and stationary phase methods, which provide $L^1 $ to $L^\infty$   estimates, and the   vectorfield methods,  based on commutation properties of the linearized system with   special    vectorfields,   which provide  $L^2$ to $L^\infty$  decay estimates. So far  only the  first  method   was  used for deriving decay estimates  for anisotropic  linear systems, see \cite{Lie91}. It was  widely believed that  vectorfield method  
 is inapplicable  to such systems due to the  lack of   enough  commuting vectorfields.  The goal of this  paper  is   demonstrate that this perception is wrong and that, at least, for   systems of   nonlinear,   linearly decoupled,   wave equations   of the form: 
 \begin{equation}\lab{eq:systems12}
\left\{\begin{array}{l}
 \, ^{(1)} \square\phi_1 = N_1\\
 \cdots\\ 
    \, ^{(m)} \square \phi_m = N_m\\ 
\end{array}
\right.
 \end{equation}
 where $m\in \mathbb{Z}_+, m\geq 2$ and $\forall i\in \{1,\cdots, m\}$, $ \, ^{(i)}\square:=  -  \pr_t^2 + \ep_1^i\pr_i^2+\ep_2^i \pr_2^2+\ep_3^i \pr_3^2$
 with positive  constants $ \ep_1^i, \ep_2^i, \ep_3^i$,
  the vectorfield method produces  realistic  decay estimates.  By a simple scaling  transformation  the system
   is equivalent  to the one in which   one of the two wave operators  is  the standard one in the Minkowski  space.
  The  key observation  is  that  both wave operators 
  $ \, ^{(1)} \square, \cdots,  \, ^{(m)} \square$  commute  with the scaling operator $S= t\pr_t +x^1\pr_1+x^2\pr_2+x^3 \pr_3$, i.e.
  \beaa
  \forall i\in\{1,\cdots, m\},\quad  [S,  \, ^{(i)} \square]= 2 \, ^{(i)} \square.
  \eeaa
  As a consequence of this  fact the problem of deriving combined decay estimates  for  all $\phi_1,\cdots, \phi_m$ of the linear system
   reduces to the problem of deriving  decay estimates  for  solutions of the standard wave equations $\square \phi=0$  which depend only on the commutation properties of  $\square$ with the standard derivatives $\pr_1, \pr_2, \pr_3$  and $S$. We provide two type of  results  one which establishes an $L^1-L^\infty$   estimate with $t^{-1} $ decay, based on  the stationary phase method,  and a second, restricted   to compactly supported  data,   that provides\footnote{We  give two   derivation for this second result, one based on a purely physical space argument   and a second one   which relies again on  the  method of stationary phase.} an  $L^2-L^\infty$    estimate with decay of the order of $t^{-\frac 12 } \big( 1+ \big|t-|x|\big|\big)^{-\frac 1 2 }$.      

    In the last section of the paper we apply  these estimates  to derive   preliminary  results   for  systems of type \eqref{eq:systems12} with cubic\footnote{Note that  cubic nonlinearities    are quite natural in  the context of nonlinear optics, see \cite{Metivier}. } nonlinearities $N_1, \cdots, N_m$  in the first derivatives of $\phi_1,\cdots, \phi_m$.

    \subsection{Connection  to  crystal optics}
    Systems of type \eqref{eq:systems12}   can be derived  from  the  following model of a 
 Maxwell system \eqref{classicform}  in homogeneous anisotropic media.
\be\label{classicform}
\textup{(Anisotropic Maxwell)}\quad \left\{\begin{array}{l}
\p_t B + \nabla\times E=0\\ 
\p_t D -\nabla \times H=0\\ 
\nabla\cdot D =0, \,\, \nabla\cdot B=0\\ 
D=\begin{bmatrix}
\epsilon_1 & 0 & 0\\ 
0 & \epsilon_2 & 0\\ 
0 & 0 & \epsilon_3\\
\end{bmatrix} E, \quad B= \mu H,
\end{array}\right.
\ee
where $D$ denotes the electric displacement, $E$ denotes the electric field, $H$ denotes the magnetic field, and $B$ denotes the magnetic induction. Moreover, $\epsilon_i \in \R_{+}, i \in\{1,2,3\},$ are called dielectric constants and $\mu$ is the tensor of magnetic permeability.  Without loss of generality,  one can  assume that  $\epsilon_1\geq    \epsilon_2 \geq \epsilon_3$ and $\epsilon_1=1.$ 

 The dielectric constants of crystals typically exhibit nonlinear behavior with respect to the electric field.    As mentioned above, the first step   in controlling the behavior of the nonlinear system is to 
 derive decay estimates  to  linear case, i.e when   $\ep_i$ ($i = 1, 2, 3$)  constants.
 
\begin{enumerate}
\item[(i)]
 With identical $\epsilon_1, \epsilon_2, \epsilon_3$, the system reduces to the classic Maxwell system, satisfying the standard second-order wave equation. This system admits a complete set of commuting rotational and Lorentz vector fields. A full analysis of   $L^2-L^\infty$ decay estimates   based on the vectorfield method  was given in \cite{CK90}.

 \item[(ii)] If only two of  the constants  $\epsilon_1, \epsilon_2, \epsilon_3$ are equal (the uniaxial case), the anisotropic Maxwell system becomes a coupled multi-speed wave system, with each wave satisfying a second-order wave equation. This system corresponds to a special case\footnote{ With $m=2$, in which $\,^{(1)}\square=\square $, $\ep_0=1$ and  two of the  $\ep$ constants coincide.} of  \eqref{eq:systems12} thus lacking   a complete set of commuting rotational and Lorentz vector fields. 

\item[(iii)] If all three parameters $\epsilon_1, \epsilon_2, \epsilon_3$ are distinct (the biaxial case), the anisotropic Maxwell system transforms into a coupled multi-speed wave system characterized by fourth-order wave equations for each wave component,  see \cite{Lie91}.  This is the case when the only commuting vectorfield is $S$. 

\end{enumerate}

\subsection{Main notation}

We consider the Minkowski space\footnote{ For simplicity of the presentation   we restrict our discussion to $n=3$, even though 
many  of the things done here can  be easily extended to  other dimensions.} $\RRR^{1+3}$  with  standard coordinates   $t=x^0, x^1, x^2, \ldots x^3$. We denote by 
$  \Si_t$  the time slices  of constant $t$ and $\Sext_t$ the exterior region of $\Si_t$ where $|x|\geq t$. Given  functions $\psi=\psi(t, x)$ we denote by       $\nab\psi=(\pr_1, \pr_2, \pr_3 )\psi$ its  spatial gradient and by $\pr \psi$ the spacetime gradient  $(\pr_t \psi, \nab \psi)$.  In  polar coordinate,  $r$ denotes  the radial function and $S(t,r)$ the sphere of radius $r$ on $\Si_t$. We  denote by $\nabb$ the induced covariant derivative on $S=S(t,r)$ 
  and  denote  by $\lapp=\de^{ab}\nabb_a\nabb_b $ the induced    Laplace-Beltrami operator on $S$.  Here  $(e_a)_{a=1,2}$ denotes  an arbitrary    orthonormal frames on $S$.   Given such a frame $e_1, e_2$  we note    that $\pr_r, e_1, e_2$  forms 
   an orthonormal frame on  $\Si_t$.  In polar coordinate the $3D $ Laplacian  can be  written in the form
    $\lap f = r^{-2} \pr_r(r^2 \pr_r f) +\lapp f$.  We  write  the wave operator in the form  $\square=-\p_t^2 +\Delta.$ 
    We denote by $\nab^k, \nab^\le k, \nabb^k, \nabb^{\le k}  $ the set of  the corresponding   partial   derivatives  of order $k$ and  respectively less or equal to $k$. 
     We introduce similar notation for  the set of all   commuting  vectorfields  $\Ga\in \big\{S, \pr_1, \pr_2, \pr_3\}$,  where  $S$ is the scaling vectorfield  $S=t\pr_t +x^i \pr_i$,  and denote by $ \Ga^{\le k} $ the set  of all $\Ga$  derivatives   of order    $\le k$.  Given a  function $\psi=\psi(t,x) $  the  corresponding $L^2$ norms   on the time slices $t$ are  written in the form  $\|\Ga^{\le k}\psi(t)\|_{L^2} $. 
     
For any two numbers $A$ and $B$, we use  $A\lesssim B$, $A\approx B$,  and $A\ll B$ to denote  $A\leq C B$, $|A-B|\leq c A$, and $A\leq c B$ respectively, where $C$ is an absolute constant and $c$ is a sufficiently small absolute constant. We use $A\sim B$ to denote the case when $A\lesssim B$ and $B\lesssim A$. 

 We use the convention that the time ``$t$'' presented in this paper is much larger than an absolute constant $C$, otherwise $L^\infty_x$ can be trivially bounded by the energy norm from the classic Sobolev embedding. 

     Let   {$\chi(x)$ be the  function on $\RRR$ defined as follows}
\be\label{2025July28eqn1}
 \chi(x):=\left\{\begin{array}{ll}
 0 & x\leq -3\\ 
 \frac{1}{4}(x+3)^{11} & x\in [-3,-2]\\ 
 \frac{1}{4}[-19(x+1)^{11}-22(x+1)^{10} +4]& x\in [-2,-1]\\  
 1& x\in [-1,0]\\ 
\chi(-x) & x\geq 0.
 \end{array}\right. 
 \ee
A straightforward computation shows that
\be\label{2025July28eqn2}
  \frac{|\p_x\chi(x)|^2}{\chi(x)} +  \frac{|\p_x^2\chi(x)|^2}{\chi(x)} \lesssim 1. 
\ee 
For any $k,k_1,k_2\in \Z$, we define the following rescaled bump function,
\be\label{2025July28eqn3}
\chi_{\leq k}(x):= \chi(2^{-k}x),\quad \chi_{k}(x):= \chi_{\leq k}(x)- \chi_{\leq k-1}(x), \quad \chi_{[k_1,k_2]}(x):=\sum_{k\in[k_1,k_2]\cap\Z} \chi_{k}(x).
\ee
  
    \subsection{Main results}

    As mention earlier the  goal of the paper is  to derive   realistic decay estimates  for solutions  to  standard  wave equation
    \bea
  \lab{eq:WEQ1}\lab{eq:scalarwave}
  \square \phi=f 
  \eea
  based only on the commutation properties of $\square $ with   the vectorfields  $\Ga\in\big\{\pr_1, \pr_2, \pr_3, S\big\}$, and then   use  these estimates to derive   global existence results for nonlinear  wave systems  of type \eqref{eq:systems12}.
  \ Given that  the scaling $S$ is only timelike  in the interior of the light cone $|x|<t$  in the first results given below,
     {Part (i)  of Theorem \ref{Thm:main1}},   we restrict our attention to solutions supported in that region, i.e. we assume that  that the initial data at $t=1$ has compact support  in the disk $|x|<1$ and that $f$ is likewise  supported in  $|x|<t$.  { In part (ii) of the Theorem, where we remove  the  compact support assumption, we get a  weaker  estimate in the exterior region, where $|x|>t$.   }
   
\begin{theorem}\lab{Thm:main1}
Given  $\delta>0$,  a  sufficiently small absolute constant, for solutions $\phi$  of \eqref{eq:scalarwave},  we have
\begin{enumerate}
  \item[(i)] If the solution is supported in the interior region, i.e., $|x|\leq t$, we have
\be\lab{finalestphysicspace}
\begin{split}
|\pr \phi(t,x)|&\lesssim   \delta^{-1}\Big[ \langle t \rangle^{-(1-\delta) /2 }\langle t -|x|\rangle^{-1/2+\delta }   \|\pr \,\Ga^{\le 3}\phi(t)\|_{L^2_x}   + \langle t\rangle^{1/2+\delta  }\| \Ga^{\le 2} f(t)\|_{L^2_x}\Big].
\end{split}
\ee
 \item[(ii)]  {In the general case, for any $x_0\in \R^3$}, the following estimate holds 
 \be\lab{finalestphysicspaceexterior}
 \begin{split}
 |\pr \phi(t,x_0)|   & \lesssim  
\delta^{-1} |t|^{-1/2} \langle x_0\rangle^{\delta} \|\Gamma^{\leq 3}\phi(t) \|_{L^2}^{(1-\delta)/2}\Big[    \big( \|\Gamma^{\leq 3}\phi(t)\|_{L^2} +  \|r^2\Gamma^{\leq 3}\phi(0)\|_{L^2}\big)   \\ 
 &\quad   + \langle t\rangle \| \Gamma^{\leq 1} f(t)\|_{L^2} +    \big(\int_0^t \|   u  \pr  \Gamma^{\leq 1} f\|_{L^2(\Sext_t)} \big)\Big]^{(1+\delta) /2} +\| \Gamma^{\leq 1} f(t)\|_{L^2}. 
 \end{split}
\ee
\end{enumerate}
  \end{theorem}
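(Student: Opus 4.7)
The strategy for Theorem \ref{Thm:main1} is an adapted Klainerman--Sobolev argument in which the $t$- and $(t-|x|)$-decay is extracted from the scaling vector field $S = t\pr_t + r\pr_r$ via a dyadic decomposition in $r$ and $t-r$, with the $\delta$-loss compensating for the absence of commuting rotational and Lorentz fields. Part (i) admits two proofs, one in physical space and one via stationary phase; I sketch the physical-space one and then comment on part (ii).

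For part (i), work in polar coordinates and exploit the identity
$$ r\,\square\phi \;=\; -4\pr_u\pr_v(r\phi) + r^{-1}\lapp\phi, \qquad u = t-r,\ v=t+r, $$
which reduces the 3D wave to a 1+1 wave for $r\phi$ plus the angular Laplacian. Since we lack rotations $\Om_{ij} = x^i\pr_j - x^j\pr_i$ as commuting fields, $\lapp\phi$ is controlled via $|\Om|\les r|\nab|$, paying an $r$-weight per angular derivative. The compact-support hypothesis forces $r\phi|_{r=t}=0$, so at a point $(t,x_0)$ with $r_0=|x_0|$, a fundamental-theorem-of-calculus argument in $r$ coupled with Cauchy--Schwarz gives
$$ |(r\phi)(t,r_0,\om_0)|^2 \;\les\; \|r\phi(\cdot,\om_0)\|_{L^2(r_0,t)}\,\|\pr_r(r\phi)(\cdot,\om_0)\|_{L^2(r_0,t)}. $$
A Sobolev embedding on $S^2$ (two angular derivatives, each converted to $r|\nab|$) upgrades this to pointwise control in $\om_0$ in terms of $L^2$ norms on $\Si_t$.

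The next step converts radial derivatives into the commuting set $\{\pr, S\}$. The identities $r\pr_r = S - t\pr_t$ and $(t-r)\Lb = 2S - (t+r)L$, with $L = \pr_t + \pr_r$ and $\Lb = \pr_t - \pr_r$, express the ``bad'' derivative $\Lb$ through $S$ and $L$ at the price of a $(t-r)^{-1}$ weight; this produces exactly the $\langle t-r\rangle^{-1/2+\delta}$ factor in \eqref{finalestphysicspace}. Summing over dyadic scales in $r$ and $t-r$ and inserting the Sobolev bound above on each piece yields logarithmic divergences, which the $\delta$-loss converts into convergent geometric series, giving the $\delta^{-1}$ prefactor. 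The source $f$ enters via the inhomogeneous version of the null identity (integrating $\pr_u\pr_v(r\phi) = -\tfrac14(rf - r^{-1}\lapp\phi)$), contributing the $\langle t\rangle^{1/2+\delta}\|\Ga^{\le 2}f\|_{L^2}$ piece.

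For part (ii), the compact-support integration breaks down since radiation escapes into the exterior, so I would use the Fourier representation
$$ \phi(t,x) \;=\; \sum_\pm \int e^{i(x\c\xi \pm t|\xi|)}\widehat{a}_\pm(\xi)\,d\xi + (\text{Duhamel from }f), $$
and apply stationary phase to the phase $\Phi = x\c\xi \pm t|\xi|$. The scaling $S$ acts in Fourier as $-\xi\c\nab_\xi - 3$, providing the symbol regularity needed for repeated integration by parts; the $(1\pm\delta)/2$ interpolation exponents arise from Cauchy--Schwarz between different frequency dyadic pieces, and the $\|r^2\Ga^{\le 3}\phi(0)\|_{L^2}$ weight reflects the duality between two $\xi$-derivatives of the symbol and $r^2$-weights on initial data. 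The principal obstacle throughout is the absence of rotations, which forces $|\Om|\les r|\nab|$ and would naively ruin the decay; the compensating mechanism is $S$-commutation combined with $(t-r)$-weights, summed dyadically at $\delta$-loss. Part (ii) is additionally delicate in the exterior $|x|>t$ where $S$ is no longer timelike, and the $L^2$-interpolation form of the inequality is precisely the device that still produces a meaningful bound there.
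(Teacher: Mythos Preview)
Your sketch misses the paper's central mechanism and, as written, has a real gap.

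For part (i) the paper does \emph{not} use the null decomposition $r\square\phi = -4\pr_u\pr_v(r\phi) + r^{-1}\lapp\phi$ or the Klainerman--Sideris identity $(t-r)\Lb = 2S - (t+r)L$. Instead it rewrites $\square\phi = f$ as a \emph{degenerate elliptic} equation
\[
L[\phi] := \lapp\phi + \Big(1-\frac{r^2}{t^2}\Big) r^{-2}\pr_r(r^2\pr_r\phi) = F[\phi] + f, \qquad F[\phi] = t^{-2}\big(2t\pr_t S\phi - S^2\phi - S\phi\big),
\]
and then proves a weighted B\"ochner identity for $L$ (Lemma \ref{WDBI}) which controls $\|\om^{1/2}\nabb^2\phi\|_{L^2}$ and $\|(1-r^2/t^2)\om^{1/2}\pr_r^2\phi\|_{L^2}$ in terms of $t^{-1}\|\pr\Ga^{\le 1}\phi\|_{L^2}$ plus $\|f\|_{L^2}$ and a lower-order piece. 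The $\langle t-r\rangle^{-1/2+\delta}$ gain comes from the degenerate ellipticity coefficient $(1-r^2/t^2)\sim (t-r)/t$, extracted by applying the B\"ochner identity twice (once to $\phi$, once to $S\phi$ after an integration by parts in $r$) and fed into a tailored weighted Sobolev inequality (Lemma \ref{Sobolevembedding}) that deliberately splits angular and radial derivatives. Your route via $(t-r)\Lb = 2S - (t+r)L$ leaves the term $(t+r)L\phi$ uncontrolled: $L\phi$ is only bounded in $L^2$ by energy, so $(t+r)L\phi$ grows like $t$ in $L^2$, and the mechanism you describe does not close. The FTC-from-the-cone step also produces a factor $(t-r_0)^{1/2}$, not $(t-r_0)^{-1/2}$, so you would need a compensating smallness of the integrand that your sketch does not supply.

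For part (ii) your guess that the paper switches to stationary phase is incorrect: the proof is again physical-space. The new ingredient is an \emph{exterior} weighted energy estimate (Lemma \ref{Lemma:exterior-energy}),
\[
\int_{\Sext_t} |u|^2|\pr^2\phi|^2 \;\lesssim\; \int_{\Si_0} r^2|\pr^2\phi|^2 + \Big(\int_0^t \|u\,\pr\square\phi\|_{L^2(\Sext_t)}\Big)^2,
\]
which is used to absorb the bad-sign term $\int_{\Sext_t}(r^2/t^2-1)|\nabb\pr_r\phi|^2$ appearing when one integrates the B\"ochner identity in the exterior. This is the origin of the $\|r^2\Ga^{\le 3}\phi(0)\|_{L^2}$ and the time-integrated $\|u\,\pr\Ga^{\le 1}f\|_{L^2(\Sext_t)}$ terms in \eqref{finalestphysicspaceexterior}; they are not Fourier-dual to $\xi$-derivatives as you suggest. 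The stationary-phase argument is reserved for Theorem \ref{Thm:main2}, which is a separate, sharper statement.
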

    \begin{remark}
  The proof  {of part (i) }  is based  on the fact that once we  control  the energy norm of  $S\phi$,
    the wave  equation  can be re-expressed, inside   the region where $S$ is time-like,
    as an inhomogeneous  elliptic  equation  for  $\phi$ which degenerates    along the null cone  $|x|=t$. 
    Using  a   B\"ochner identity  type calculation (Lemma \ref{WDBI})  this 
       allows us to get   $L^2$ control on the  second derivatives  of $\phi$. More precisely 
       we   get a very  good control for the  second angular derivatives $\nabb^2 \phi$  
         with degeneration   for the radial derivatives ($\partial_r^2 \phi$ and $\slashed{\nabla} \partial_r \phi$).
         These $L^2 $ estimates are then transformed  into  degenerate  sup-norm estimates  with the help of 
           a   weighted version  of the  Sobolev embedding Lemma  (see Lemma \ref{Sobolevembedding})
           tailored to take  full advantage of the  information given by our  degenerate  B\"ochner  identity (DBI).
     Due to the insufficient decay rates in the $L^2$ framework, the inhomogeneous term $f$ in \eqref{eq:scalarwave}  requires careful handling. To address this, we strategically distribute the angular and radial derivatives across different components in the weighted Sobolev embedding, allowing for improved control over the inhomogeneous term in \eqref{finalestphysicspace}. For further discussion, see Remark \ref{remark1}.  { In the proof of   part  (ii)  we observe that  negative  term in the same Bochner identity  can be controlled   in the exterior 
 region  by  an energy estimate   similar to one previously  used in     \cite{Klainerman16}. }
\end{remark} 
 {\begin{remark}
The estimate in \eqref{finalestphysicspace} is derived by applying the DBI twice. Repeated application of the DBI would further improve the decay rate to $\langle t -|x_0|\rangle^{-1+}$. This improvement stems from the fact that the most problematic term in the DBI (see \eqref{2025July25eqn1}) always involves a radial derivative. Iteratively applying the DBI to this term would, in principle, lead to a decay rate of $\langle t -|x_0|\rangle^{-1/2-1/4-1/8-\cdots}$, which simplifies to $\langle t -|x_0|\rangle^{-1+}$.  
 \end{remark}  }
 {Part (i)  of our   second theorem, based on the method of stationary phase,  { gives more precise  decay information}  than  Theorem \ref{Thm:main1}. Part (ii)   provides  a sharp  $L^1-L^\infty$  decay estimate  with a 
  $t^{-1}$ rate of decay.}

\begin{theorem}\lab{Thm:main2}
Considering the inhomogeneous wave equation $\square \phi=f$ with the initial data $(\phi,\p_t\phi)\big|_{t=0}=(\phi_0,\phi_1)$, we have the following two linear estimates. 
\begin{enumerate}
\item[(i)] We have\footnote{The  estimate  below shows that the ``$\delta$'' loss of time decay  in Theorem \ref{Thm:main1} can be removed,  using  the more precise method   stationary  phase. Moreover    the decay rate with respect to the light cone can also  be improved  in the interior region.} 
\be\label{2025July23eqn10}
|\pr \phi(t,x)|\lesssim  \delta^{-1}\Big[  \langle t \rangle^{-1 /2 }\big(\langle t -|x|\rangle^{-1+\delta   }\mathbf{1}_{|x|\leq t} + \mathbf{1}_{|x|> t}\big)  \|\pr \,\Ga^{\le 3}\phi(t)\|_{L^2_x}   + \langle t\rangle^{1/2  }\| \Ga^{\le 2} f(t)\|_{L^2_x}\Big] . 
\ee

\item[(ii)]   If $\phi$ is frequency localized, i.e.   $supp(\hat{\phi}(t,\xi))\subset \{\xi:\xi\in\R^3, |\xi|\sim 2^k\}$, for some  $  k\in \Z $, we have\footnote{ The  estimate   is tailored for applications to  the nonlinear problem.}   
\be\label{2025July23eqn11}
\begin{split}
|\pr \phi (t,x)|  &\lesssim  \langle t\rangle^{-1} 2^{2k}  \|    \widehat{\pr \Gamma^{\leq 2}\phi }(t,\xi)    \|_{L^\infty_\xi }      +  \langle t \rangle^{1/2}2^k \| \Gamma^{\leq 1} f (t,x)\|_{L^2}       .
 \end{split}
 \ee 
 Moreover, as a corollary of \eqref{2025July23eqn11}, for the homogeneous wave equation, i.e., $f=0$, we have\footnote{The  estimate  shows that,  for the standard  homogeneous case, the sharp uniform  decay rate is achievable by only using the scaling  vector field.} 
\[
  \|\pr \phi(t,x)\|_{L^\infty}\lesssim_{} \langle t\rangle^{-1} \big(\| \Gamma^{\leq 4} \phi_0 \|_{L^1_x} +\| \Gamma^{\leq 4} \phi_1 \|_{L^1_x}  \big).
\]

\end{enumerate}
\end{theorem}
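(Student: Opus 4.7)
The plan is to represent $\phi$ via its Fourier transform on each time slice and apply the method of stationary phase, with the scaling operator $S$ supplying the radial frequency regularity that would ordinarily be furnished by rotational vectorfields (unavailable in the anisotropic setting). First I split $\phi = \phi^+ + \phi^-$ into the two half-wave components, so that $\hat\phi^\pm(t,\xi) = e^{\pm it|\xi|}g_\pm(t,\xi)$ for profiles $g_\pm$. A quick calculation using $t\pr_t e^{\pm it|\xi|} = \pm it|\xi| e^{\pm it|\xi|}$ and $\xi\cdot\nab_\xi e^{\pm it|\xi|} = \pm it|\xi| e^{\pm it|\xi|}$ cancels the time-dependent phase and yields
\beaa
\widehat{S\phi^\pm}(t,\xi) = -e^{\pm it|\xi|}(\xi\cdot\nab_\xi + 3)g_\pm(t,\xi) + (\textup{Duhamel contribution of }f),
\eeaa
so radial frequency derivatives $(\rho\pr_\rho)g_\pm$ are controlled in $L^\infty_\xi$ by $\widehat{S\phi^\pm}$, while translation derivatives $\pr_j$ correspond to multiplication by $i\xi_j$ on the Fourier side.

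For Part (ii), I would write $\pr\phi^+(t,x) = \int e^{i(x\cdot\xi + t|\xi|)}(i\xi)g_+(t,\xi)\,d\xi$, pass to polar coordinates $\xi = \rho\omega$, and carry out angular stationary phase on $S^2$: the phase $\rho x\cdot\omega$ has non-degenerate critical points at $\omega=\pm x/|x|$ with Hessian of scale $|x|$, yielding a principal contribution of size $(\rho|x|)^{-1}$ times $g_+$ evaluated at those points. I handle the stationary-phase remainder by reducing the inner integral to the one-dimensional integral $\int_{-1}^1 e^{i\rho|x|s}\tilde g_+(\rho,s)\,ds$, where $\tilde g_+$ is the circle-average of $g_+$ transverse to $x/|x|$, and integrating by parts in $s$ twice; crucially, $\pr_s\tilde g_+$ is expressible using only $\rho\pr_\rho g_+$ (from $S\phi$) and $\xi$-multiplication (from $\pr_j\phi$), completely bypassing rotational vectorfields. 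After the angular reduction, the outer $\rho$-integral over a band of length $2^k$ is controlled crudely by $(2^{3k}/|x|)\|g_+\|_{L^\infty_\xi}$; in the regime $|x|\gtrsim t$ this is the desired $(2^{3k}/t)\|\hat\phi^+\|_{L^\infty_\xi}$, which combined with $\|\widehat{\pr\phi}\|_{L^\infty_\xi}\sim 2^k\|\hat\phi\|_{L^\infty_\xi}$ on the frequency-localized support gives \eqref{2025July23eqn11}, the $\Gamma^{\le 2}$ tracking the two $s$-integrations by parts. The complementary regime $|x|\ll t$ is handled by non-stationary-phase integration by parts in $\xi$, since the phase gradient $x+t\hat\xi$ has size $\sim t$ and never vanishes. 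The Duhamel contribution from $f$ produces the $\langle t\rangle^{1/2}2^k\|\Gamma^{\le 1}f\|_{L^2}$ term via a frequency-localized energy estimate. The corollary for homogeneous data then follows from $\|\hat h\|_{L^\infty_\xi}\le \|h\|_{L^1_x}$, commutation of $\Gamma$ with $e^{\pm it|\nab|}$, and a Littlewood-Paley summation in which the two extra derivatives ($\Gamma^{\le 4}$ versus $\Gamma^{\le 2}$) secure absolute convergence of the dyadic series.

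For Part (i), I Littlewood-Paley decompose $\phi=\sum_k P_k\phi$, apply the frequency-localized estimate of (ii) to each piece, and convert $L^\infty_\xi$ to $L^2_x$ by Bernstein $\|\widehat{P_k h}\|_{L^\infty_\xi}\lesssim 2^{3k/2}\|P_k h\|_{L^2}$. To extract the extra interior decay $\langle t-|x|\rangle^{-1+\delta}$ I iterate integration by parts in $\rho$ against the phase $e^{i\rho(t-|x|)}$: each step costs one application of $\rho\pr_\rho$ (i.e., one $S$) and gains a factor $\min(1,(2^k|t-|x||)^{-1})$; stopping at the iteration count that optimally balances the $2^k$ losses against the $|t-|x||^{-1}$ gains, and summing dyadically in $k$, produces $\langle t-|x|\rangle^{-1+\delta}$ with the $\delta^{-1}$ coming from the geometric series of dyadic losses. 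In the exterior region $|x|>t$ there is no stationary-phase cancellation and the simpler $\langle t\rangle^{-1/2}$ bound suffices.

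The main technical obstacle throughout is performing angular stationary phase on $S^2$ without rotational vectorfields $\Omega_{ij}$; the one-dimensional reduction to $s = x\cdot\omega/|x|\in[-1,1]$ avoids this by converting the $S^2$ integral into a problem solvable by radial ($\rho\pr_\rho$) derivatives and $\xi$-multiplication alone — precisely the tools supplied by $S$ and $\pr_j$. The delicate parts will be (a) controlling the coordinate singularity of the $(s,\phi)$ parametrization near $s=\pm 1$ and verifying that $\pr_s\tilde g_+$ does not pick up uncontrolled singularities there, and (b) the precise bookkeeping of where exactly the two $\Gamma$'s in Part (ii) (respectively three $\Gamma$'s in Part (i)) are spent so that the final estimate matches the stated form.
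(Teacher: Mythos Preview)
Your proposal contains a genuine gap at precisely the point you flag as the main obstacle. The claim that $\partial_s\tilde g_+$ is expressible using only $\rho\partial_\rho g_+$ and $\xi$-multiplication is false: in the coordinates $\xi=\rho\omega$, $s=x\cdot\omega/|x|$, the operator $\partial_s$ at fixed $\rho$ is a purely angular derivative on $S^2$, orthogonal to $\rho\partial_\rho=\xi\cdot\nabla_\xi$, and multiplication by $\xi_j$ supplies no $\xi$-derivative whatsoever. Circle-averaging does not help --- $\partial_s\tilde g_+$ is exactly the quantity detecting angular variation of $g_+$ (it vanishes identically when $g_+$ is radial). Hence your two $s$-integrations by parts demand exactly the rotational frequency regularity that $\Gamma\in\{S,\partial_1,\partial_2,\partial_3\}$ cannot provide, and both the angular reduction in (ii) and the subsequent ``outer-$\rho$ against $e^{i\rho(t-|x|)}$'' scheme in (i), which is predicated on that reduction, collapse.

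The paper's route avoids angular stationary phase entirely. It (a) integrates by parts in the full $3$D integral only along the radial frequency direction, via
\[
e^{ix\cdot\xi-it|\xi|}=\frac{\xi\cdot\nabla_\xi}{i(x\cdot\xi-t|\xi|)}\,e^{ix\cdot\xi-it|\xi|},
\]
each application costing one $S$ through the identity $\xi\cdot\nabla_\xi\widehat V=-\widehat{V^S}+te^{it|\xi|}\widehat{\square\phi}-3\widehat V$; (b) dyadically localizes in $2^l\sim|1-x\cdot\xi/(t|\xi|)|$; and (c) replaces angular decay by the geometric measure estimate
\[
\big|\{\xi:|\xi|\sim 2^k,\ |1-x\cdot\xi/(t|\xi|)|\lesssim 2^l\}\big|\lesssim 2^{3k+l},
\]
which encodes that the near-stationary directions form a spherical cap of width $\sim 2^l$. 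On the threshold shell $l=\bar l=-m-k$ one takes no integration by parts and uses the volume directly (Cauchy--Schwarz gives $2^{(3k+\bar l)/2}\|\widehat V\|_{L^2}$ for part (i), the full volume gives $2^{3k+\bar l}\|\widehat V\|_{L^\infty_\xi}$ for part (ii)); on the remaining shells each radial integration by parts gains $2^{-m-k-l}$ against a volume loss of $2^{(3k+l)/2}$ or $2^{3k+l}$, making the $l$-sum convergent. The interior factor $\langle t-|x|\rangle^{-1+\delta}$ then comes from interpolating one additional radial integration by parts (exploiting the uniform lower bound $|x\cdot\xi-t|\xi||\gtrsim 2^k(t-|x|)$ valid when $|x|<t$) against the baseline estimate. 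The ingredient your plan is missing is this measure lemma; once you have it, the argument runs entirely on radial integration by parts and no angular reduction is needed.
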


\begin{remark}\label{geometricmeasure}
The proof of   {both parts} of   Theorem \ref{Thm:main2} relies on a crucial geometric observation regarding the localized Fourier space  set $S_{k,l}(t,x):=\big\{\xi: | 1+ {\mu x\cdot\xi}\big/{t|\xi|} |\leq  2^{l}, |\xi|\in  [2^{k-1}, 2^{k}], \mu\in\{+,-\} \big\}$, where $k,l\in \Z$.  This set emerges naturally from the classical stationary phase method. Remarkably, regardless of  $x\in \R^3, t\in \R/\{0\}$, the directions of  $S_{k,l}(t,x)$,  $\{\xi/|\xi|: \xi\in S_{k,l}(t,x)\}$,  are confined to a thin spherical segment of size $2^l$. This leads to a sharp measure estimate for $S_{k,l}(t,x)$, which in turn leads to the linear estimates  \eqref{2025July23eqn10} and \eqref{2025July23eqn11}. 
\end{remark} 

 {\begin{remark}
 The improved decay rate observed near the light cone in the interior region (cf. \eqref{finalestphysicspace} and \eqref{2025July23eqn10}) can be attributed to the ellipticity of the operator $\Delta-r^2 t^{-2}\p_r^2$ in that domain. This ellipticity is lost in the exterior region, which accounts for the lack of a corresponding estimate in \eqref{2025July23eqn10}.
\end{remark}}

 The following   results are applications  of these decay estimates   to global and long time existence results for the system of type   \eqref{eq:systems12} with cubic or higher nonlinearities\footnote{  {As mentioned  earlier,  cubic (Kerr) nonlinearities  appear natural in nonlinear optics,   see for example \cite{Metivier}[Section 2].}}.
   More precisely, we assume that the nolinearities $N_i, i\in\{1,\cdots, m\}$, are given as follows,
\be
N_i:=\sum_{a_1, a_2,a_3\in\{1,\cdots, m\}} C^i_{a_1,a_2,a_3}(\pr \phi_{a_1}, \pr\phi_{a_2}, \pr\phi_{a_3}),  
\ee
where  $ C^i_{a_1,a_2,a_3}(\cdot, \cdot, \cdot), i \in\{1,\cdots, m\}, a_1,a_2, a_3\in \{1,\cdots, m\}$ are  some  smooth functions.  
The following theorem establishes small-data global existence for the system \eqref{eq:systems12}    { using exclusively the generalized  energy norm (based on the vectorfields $\Ga\in\big\{ \pr_1, \pr_2, \pr_3, S\big\}$}.
\begin{theorem}\lab{Thm:main3}
Let $m\geq 3$. Assume that the initial data  $(\phi_i,\p_t \phi_i)\big|_{t=0}$, $i\in\{1,\cdots, m\}$, of  the coupled system  \eqref{eq:systems12} with nonlinearities as in \eqref{nonlinearityass}     are  regular and compactly supported.  Moreover, we impose the following  {three structural assumptions\footnote{These are technical assumptions  meant to take full advantage  of the estimates derived in  Theorems \ref{Thm:main1}, \ref{Thm:main2}. } of on our system}, 
\begin{enumerate}
\item[(i)] Separation assumption on the  light cones.\\ 
Define  $r_i:=\sqrt{ (\epsilon_{1}^{i})^{-2} x_1^2 + (\epsilon_{2}^{i})^{-2} x_2^2+ (\epsilon_{3}^{i})^{-2} x_3^2 }$. The light cones $t=r_i$ associated with the wave $\phi_i$ are well separated.  More precisely, for any $i, j\in \{1,\cdots, m\}$, we have
\be\label{2025June14eqn41}
  |r_i-r_j|\sim |r_i|\sim |r_j|. 
\ee
\item[(ii)] Cubic and higher order assumption for the nonlinearities. 

For $\forall i\in \{1,\cdots,m\},|\pr\phi_i|\leq 1$, we have
\be\label{2025July30eqn1}
\forall  i,a_1,a_2,a_3\in \{1,\cdots, m\},\quad  |C^i_{a_1,a_2,a_3}(\pr \phi_{a_1}, \pr\phi_{a_2}, \pr\phi_{a_3})|\lesssim \sum_{a\in \{1,\cdots,m\}} |\pr \phi_a|^3.
\ee
\item[(iii)] No self-interaction assumption.\\ 
For any $i,a_1,a_2,a_3\in \{1,\cdots, m\}$, we assume that   $C^i_{a_1,a_2,a_3}(\cdot, \cdot, \cdot)=0$ if there exist distinct indices $i, j \in \{1, 2, 3\}$, such that $a_i = a_j$.
\end{enumerate}
Then there exists a sufficiently small absolute constant $\epsilon_0>0$ s.t., if  the following estimate holds for the initial data
 holds, 
\be\label{nonlinearityass}
    \sum_{
\begin{subarray}{c}
i\in \{1,\cdots, m\}   
\end{subarray}} \|\pr\Gamma^{\leq 30}\phi_i\|_{L^2_x}\big|_{t=0}  \leq \epsilon_0,
\ee
then the coupled system \eqref{eq:systems12}  has a unique global solution. Moreover, for any $i\in\{1,\cdots,m\}$, the nonlinear wave  $\phi_i$ scatters to a linear wave. 
  \end{theorem}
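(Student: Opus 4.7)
\textbf{Proof plan for Theorem \ref{Thm:main3}.} The plan is a standard bootstrap/continuity argument whose only non-routine input is the pointwise decay produced by Theorems \ref{Thm:main1}--\ref{Thm:main2} from the generalized energy norm based on $\Gamma\in\{\partial_1,\partial_2,\partial_3,S\}$. For each $i\in\{1,\dots,m\}$ I first perform a diagonal spatial rescaling that turns $^{(i)}\square$ into the standard Minkowski wave operator; under this rescaling each $\partial_{x_j}$ becomes a constant multiple of the new $\partial_{y_j}$ and the scaling field $S$ is invariant, so Theorems \ref{Thm:main1}--\ref{Thm:main2} apply verbatim to each $\phi_i$, with distance to the relevant wavefront measured by $r_i$. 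Set
\[
E_N(t):=\sum_{i=1}^m \|\partial\,\Gamma^{\le N}\phi_i(t)\|_{L^2_x},\qquad N=30,
\]
and make the bootstrap assumption $E_N(t)\le 2\varepsilon$ with $\epsilon_0\ll\varepsilon\ll 1$; the goal is to upgrade this to $E_N(t)\le (3/2)\varepsilon$ on the maximal existence interval.

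The decisive ingredient is a pointwise product bound combining the decay estimate with the geometric assumptions (i) and (iii). Theorem \ref{Thm:main1}(i) applied to the rescaled equation for $\phi_i$ gives, for $k\le 17$,
\[
|\partial\,\Gamma^{\le k}\phi_i(t,x)|\les \delta^{-1}\langle t\rangle^{-(1-\delta)/2}\langle t-r_i\rangle^{-1/2+\delta}\,E_{k+3}(t) + (\text{nonlinear remainder}).
\]
By assumption (iii), every cubic term in $N_j$ has three pairwise distinct indices $(a_1,a_2,a_3)$, and by (i), at any point $(t,x)$ at most one of $\langle t-r_{a_\ell}\rangle$ can be $\ll\langle t\rangle$ while the other two are $\sim\langle t\rangle$. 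Consequently, for any two distinct indices $a\ne b$ and any $k_1,k_2\le 14$,
\[
|\partial\,\Gamma^{\le k_1}\phi_a(t,x)\,\partial\,\Gamma^{\le k_2}\phi_b(t,x)| \les \langle t\rangle^{-3/2+2\delta}\,\varepsilon^2,
\]
the gain of $\langle t\rangle^{-1/2+\delta}$ coming from whichever of the two indices is far from its cone.

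With this bound the energy inequality closes. Since $[S,\,^{(i)}\square]=2\,^{(i)}\square$ and $[\partial_j,\,^{(i)}\square]=0$, commuting $\Gamma^{\le 30}$ through $^{(i)}\square$ produces only $\Gamma^{\le 30}N_i$ on the right, and the standard energy identity for $^{(i)}\square$ gives $\frac{d}{dt}E_N(t)\les \sum_i \|\Gamma^{\le 30}N_i(t)\|_{L^2}$. In each summand of $\Gamma^{\le 30}N_i$ the factor carrying the most derivatives (at least $10$) is placed in $L^2$ and absorbed into $E_N(t)$, while the remaining two factors (each with at most $15$ derivatives) are absorbed into $L^\infty$ via the product bound above. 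This yields $\|\Gamma^{\le 30}N_i(t)\|_{L^2}\les \langle t\rangle^{-3/2+2\delta}\,\varepsilon^2\,E_N(t)$, and since $\langle t\rangle^{-3/2+2\delta}$ is integrable on $[1,\infty)$, Gronwall gives $E_N(t)\le \epsilon_0\exp(C\varepsilon^2)\le (3/2)\varepsilon$, closing the bootstrap. The same time integrability of $\|N_i\|_{L^2}$ then yields scattering of each $\phi_i$ to a linear wave in the usual way.

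The main obstacle is the behaviour outside the wavefronts $r_i=t$, where the interior estimate of Theorem \ref{Thm:main1}(i) degenerates and the $\langle t-r_i\rangle$ gain is lost for waves whose cone a given point has overtaken. Either one exploits finite propagation speed from the compactly supported data to confine each $\phi_i$ to a bounded neighbourhood of its own cone, or one replaces the interior estimate by Theorem \ref{Thm:main1}(ii) / Theorem \ref{Thm:main2}(i) in the exterior and uses separation to guarantee that any given $(t,x)$ lies in the exterior of at most one of $r_{a_1},r_{a_2},r_{a_3}$, so that at least two of the three cubic factors always enjoy the full interior decay and the product bound survives. The inhomogeneous error $\langle t\rangle^{1/2+\delta}\|\Gamma^{\le 2}N_i\|_{L^2}$ in Theorem \ref{Thm:main1}(i) is a minor bookkeeping nuisance, absorbed by the separation-induced extra $t$-decay.
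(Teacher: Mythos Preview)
Your overall strategy --- bootstrap on the generalized energy, apply Theorem~\ref{Thm:main1}(i) in rescaled coordinates for each $\phi_i$, and exploit assumptions (i) and (iii) so that in every cubic term at least two factors sit far from their own cones --- matches the paper's. The gap is in what you bootstrap. You propagate only $E_N(t)$ and then write off the inhomogeneous contribution $\langle t\rangle^{1/2+\delta}\|\Gamma^{\le k+2}N_i\|_{L^2}$ in \eqref{finalestphysicspace} as ``minor bookkeeping.'' It is not. To bound that quantity you need pointwise decay on two factors of $N_i$, which in turn requires \eqref{finalestphysicspace} again, whose inhomogeneous term again needs the nonlinearity bound: the argument is circular. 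Moreover, even if you add a single bootstrap assumption $\|\Gamma^{\le 30}N_i\|_{L^2}\le\epsilon_1\langle t\rangle^{-3/2+2\delta}$, the recomputation of $N_i$ picks up an extra $\langle t\rangle^{3\delta/2}$ from the remainder and returns $\langle t\rangle^{-3/2+7\delta/2}$, so the $\delta$-losses accumulate and the loop does not close.

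The paper breaks this by bootstrapping, in addition to the energy, \emph{two} nonlinearity norms at different derivative levels and different rates: $N^{high}_i=\|\Gamma^{\le 30}N_i\|_{L^2}\lesssim\langle t\rangle^{-3/2+2\delta}$ and $N^{low}_i=\|\Gamma^{\le 25}N_i\|_{L^2}\lesssim\langle t\rangle^{-2+9\delta}$. The stronger $N^{low}$ bound makes the remainder in \eqref{finalestphysicspace} for $\le 20$ derivatives decay like $\langle t\rangle^{-3/2+10\delta}$, i.e.\ strictly subordinate to the main term, so the product bound you wrote is genuinely available at that level and recovers $N^{high}$ with the \emph{same} exponent but constant $\epsilon_1^3\ll\epsilon_1$. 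Conversely, the $N^{high}$ bound controls the remainder for $\le 25$ derivatives at $\langle t\rangle^{-1+3\delta}$; then, partitioning $\mathbb R^3$ into neighbourhoods of the cones $r_j=t$ and putting in $L^\infty$ the two inputs that are far from their cones in each region (which is possible because of (iii), regardless of where the derivatives fall), one recovers $N^{low}\lesssim\langle t\rangle^{-2+6\delta}\epsilon_1^3$, strictly better than the assumed $\langle t\rangle^{-2+9\delta}\epsilon_1$. The interlocking of the two levels is what stops the $t^\delta$ accumulation; this is the content of the Remark following the statement of Theorem~\ref{Thm:main3}. Your plan becomes a correct proof once you enlarge the bootstrap to include $N^{high}_i$ and $N^{low}_i$ with these rates.
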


  {\begin{remark}
The proof  of Theorem \ref{Thm:main3}   relies on the decay estimate \eqref{finalestphysicspace} established in Theorem \ref{Thm:main1}.  {Let $k_0=30$ denote the highest order derivative used in the energy estimate}. The separation assumption on the light cone, combined with the no self-interaction assumption, guarantees that at least two of the three inputs in  the nonlinearities $N_i$ exhibit an enhanced decay rate of $\langle t\rangle^{-1+}$ due to the additional decay along the light cones. For example\footnote{ In view of  the separation condition in \eqref{2025June14eqn41}.}, given a cubic  nonlinearity $C(\phi_1, \phi_2, \phi_3)$, according  to   \eqref{finalestphysicspace},  the decay rates of $\phi_1 $ and $\phi_2$ are improved to  $\langle t\rangle^{-1+}$,  near the light cone of $\phi_3$.

For lower-order estimates (e.g., $k\leq k_0-5$), we obtain an improved decay rate of $\langle t \rangle^{-2+}$ for $\|\Gamma^{\leq k} N_i\|_{L^2}$. This improvement stems from the flexibility to choose which two inputs are estimated in $L^\infty$. Furthermore, for higher-order estimates (e.g., $k_0-4\leq k \leq k_0$), $\|\Gamma^{\leq k} N_i\|_{L^2}$ decays at a rate of $\langle t \rangle^{-3/2+}$ because at least one input in $L^\infty_x$ benefits from the enhanced $\langle t \rangle^{-1+}$ decay. This crucial hierarchical structure, with its varying orders of $|\Gamma^{\leq k} N_i|_{L^2}$, prevents the accumulation of $t^\delta$ losses, thereby stabilizing the bootstrap argument.
\end{remark}}

Our last  theorem establishes that small-data global existence for the system \eqref{eq:systems12} with \emph{ cubic or higher order nonlinearties} (as in \eqref{2025July30eqn1}) holds true, provided the initial data belongs to $L^1_x$, and no other conditions are necessary.
  \begin{theorem}\lab{Thm:main4}
     Let $m\geq 2$. There exists a sufficiently small absolute constant $\epsilon_0>0$ s.t., if  the following estimate holds for the initial data
 holds, 
\[
     \sum_{
\begin{subarray}{c}
i\in \{1,\cdots, m\}   
\end{subarray}} \|\pr\Gamma^{\leq 30}\phi_i\|_{L^2_x\cap L^1_x}\big|_{t=0}  \leq \epsilon_0,
\]
then the coupled system  \eqref{eq:systems12} has a unique global solution. Moreover, for any $i\in\{1,\cdots,m\}$, the nonlinear wave  $\phi_i$ scatters to a linear wave. 
   \end{theorem}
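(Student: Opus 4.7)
The plan is a bootstrap argument in two norms: the generalized-energy norm $\|\pr\Gamma^{\le 30}\phi_i(t)\|_{L^2_x}$ and a Fourier-side norm $\|\widehat{\pr\Gamma^{\le 4}\phi_i}(t,\xi)\|_{L^\infty_\xi}$; the latter is controlled at $t=0$ by the $L^1_x$ hypothesis via the elementary bound $\|\hat g\|_{L^\infty_\xi}\le \|g\|_{L^1_x}$. After a diagonal rescaling of the spatial coordinates making each $^{(i)}\square$ equal to the standard d'Alembertian $\square$, and using the fact that every $\Gamma\in\{\pr_1,\pr_2,\pr_3,S\}$ commutes with $\square$, the commuted equation $\square \Gamma^\alpha\phi_i=\Gamma^\alpha N_i$ still has a cubic right-hand side in the first derivatives of $\{\Gamma^\beta\phi_j:|\beta|\le |\alpha|\}$. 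On the maximal interval of existence I would carry the two bootstrap assumptions
\[
 \mathrm{(A)}\ \ \sum_i \|\pr\Gamma^{\le 30}\phi_i(t)\|_{L^2_x}\le C\epsilon,\qquad \mathrm{(B)}\ \ \sum_i \bigl\|\widehat{\pr\Gamma^{\le 4}\phi_i}(t,\xi)\bigr\|_{L^\infty_\xi}\le C\epsilon,
\]
both of which are satisfied at $t=0$ with constant $1$ by the hypothesis of the theorem.

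Applying the frequency-localized estimate \eqref{2025July23eqn11} of Theorem \ref{Thm:main2}(ii) to each Littlewood--Paley piece $\phi_{i,k}=P_k\phi_i$ built from \eqref{2025July28eqn3}, using (B) at low frequencies (where the factor $2^{2k}$ that appears in \eqref{2025July23eqn11} furnishes the summability) and Bernstein combined with (A) at high frequencies (where the $26$ extra $\Gamma$-derivatives in (A) give rapid dyadic decay), and summing in $k\in\Z$, I expect to obtain the pointwise bound
\[
 \|\pr\phi_i(t)\|_{L^\infty_x}\lesssim C\epsilon\,\langle t\rangle^{-1},
\]
possibly up to a harmless logarithmic loss of the type already present in Theorem \ref{Thm:main2}. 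With this $\langle t\rangle^{-1}$ decay in hand, the cubic structure \eqref{2025July30eqn1} yields $\|\Gamma^{\le 30}N_i(t)\|_{L^2_x}\lesssim (C\epsilon)^3\langle t\rangle^{-2}$, simply by placing two factors of $\pr\phi$ in $L^\infty_x$ and one in $L^2_x$; this bound is integrable in time, and the standard generalized-energy estimate then closes (A) with a strictly smaller constant.

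The delicate step, and main obstacle, is the closure of (B). I would apply Duhamel in Fourier variables,
\[
 \widehat{\pr\Gamma^{\le 4}\phi_i}(t,\xi) = \widehat{(\mathrm{free\ part})}(t,\xi)+\int_0^t \cos\bigl((t-s)|\xi|\bigr)\,\widehat{\Gamma^{\le 4}N_i(s)}(\xi)\,ds,
\]
and bound the integrand pointwise in $\xi$ by $\|\Gamma^{\le 4}N_i(s)\|_{L^1_x}\lesssim \|\pr\phi(s)\|_{L^\infty_x}^2\|\pr\Gamma^{\le 4}\phi(s)\|_{L^1_x}$. The difficulty is that the $L^1_x$ norm on the right is \emph{not} preserved by the linear wave flow, so $\|\pr\Gamma^{\le 4}\phi(s)\|_{L^1_x}$ can grow in time; the cubic nature of the nonlinearity is essential here, because it provides the $\|\pr\phi(s)\|_{L^\infty_x}^2\lesssim s^{-2}$ factor, which is exactly what is needed to compensate an at-worst polynomial growth of $\|\pr\Gamma^{\le 4}\phi(s)\|_{L^1_x}$, a bound that can in turn be extracted by interpolating (A) against an $r$-weighted $L^2_x$ estimate coming from the scaling vectorfield $S=t\pr_t+x\cdot\nabla$. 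Quadratic nonlinearities would only produce $s^{-1}$ decay on this factor and force logarithmic divergences at each iteration; this explains precisely why the cubic assumption \eqref{2025July30eqn1} suffices with no further structural conditions. Once (A) and (B) both close, global existence is automatic, and scattering of each $\phi_i$ to a free solution follows from the integrability $\int_0^\infty\|\Gamma^{\le 30}N_i(s)\|_{L^2_x}\,ds<\infty$.
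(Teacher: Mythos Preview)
Your overall architecture (bootstrap the energy norm together with a Fourier $L^\infty_\xi$ norm, feed the latter into the frequency-localized estimate \eqref{2025July23eqn11}, close the energy using cubic $L^2\times L^\infty\times L^\infty$) matches the paper's. The genuine gap is in your closure of (B). You propose to bound $\|\Gamma^{\le 4}N_i(s)\|_{L^1_x}$ by $\|\pr\phi(s)\|_{L^\infty_x}^2\,\|\pr\Gamma^{\le 4}\phi(s)\|_{L^1_x}$ and then control the $L^1_x$ factor by ``interpolating (A) against an $r$-weighted $L^2_x$ estimate coming from $S$.'' This last step is not a standard maneuver and you do not indicate how it would actually work; the scaling vectorfield $S$ does not directly produce the $\langle x\rangle^s$-weighted $L^2$ control (with $s>3/2$) that a Cauchy--Schwarz bound $\|g\|_{L^1}\le\|\langle x\rangle^{-s}\|_{L^2}\|\langle x\rangle^{s}g\|_{L^2}$ would require, and without compact support there is no a priori polynomial bound on $\|\pr\Gamma^{\le 4}\phi(s)\|_{L^1_x}$.

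The paper sidesteps this entirely by estimating $\|\Gamma^{\le 30}N_i(s)\|_{L^1_x}$ via an $L^2\times L^2\times L^\infty$ Hölder bound instead of $L^\infty\times L^\infty\times L^1$: two factors go to the energy (A), and only one factor needs the pointwise decay. This yields only $\|\Gamma^{\le 30}N_i(s)\|_{L^1_x}\lesssim \epsilon_1^3\langle s\rangle^{-1+\delta}$, which is \emph{not} integrable; the resolution is to relax (B) and propagate $\|\widehat{\pr\Gamma^{\le 30}\phi_i}(t)\|_{L^\infty_\xi}\lesssim\langle t\rangle^{\delta}\epsilon_1$ (i.e.\ allow slow growth). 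The decay then degrades to $\|\pr\Gamma^{\le 20}\phi_i(t)\|_{L^\infty_x}\lesssim\langle t\rangle^{-1+\delta}\epsilon_1$, but the cubic structure absorbs this loss in the energy step, since $\langle t\rangle^{-2+2\delta}$ is still integrable. Note also that (B) with only four $\Gamma$'s is insufficient: the Leibniz expansion of $\Gamma^{\le 30}N_i$ forces many derivatives onto the $L^\infty_x$ factors, so you need the $L^\infty_\xi$ bootstrap at essentially top order (the paper carries it at order $30$), which is painless precisely because the $L^2\times L^2\times L^\infty$ bound on $\|N_i\|_{L^1_x}$ works at any order.
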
  

  \begin{remark}
The proof of Theorem \ref{Thm:main4}  relies on the decay estimate \eqref{2025July23eqn11} established in Theorem \ref{Thm:main2}. We simultaneously control the energy and the $L^\infty_\xi$-norm of the solution. Briefly, the Duhamel formula for the wave equation suggests that controlling the $L^\infty_\xi$-norm requires controlling the $L^1_x$-norm of the nonlinearity $N_i$. This can be achieved using an $L^2_x-L^2_x-L^\infty_x$ type estimate. However, even with the sharp $\langle t \rangle^{-1}$ decay, a logarithmic loss arises, hindering a standard bootstrap argument. To circumvent this, we permit a slow growth of the $L^\infty_\xi$-norm, specifically $\langle t\rangle^{\delta}$. Consequently, the decay rate becomes $\langle t \rangle^{-1+\delta}$, as per the estimate \eqref{2025July23eqn11} in Theorem \ref{Thm:main3}. Crucially, because the nonlinearities $N_i$ are cubic, the $L^2$-estimate benefits from a decay rate of $\langle t \rangle^{-2+2\delta}$, which is sufficient for the energy estimate. Furthermore, this $\langle t \rangle^{-1+\delta}$ decay rate of the $L^\infty_x$-norm ensures that the $t^\delta$ growth rate remains consistent with the $L^2_x-L^2_x-L^\infty_x$ estimate when controlling the $L^1_x$-norm of the nonlinearities $N_i$. 
\end{remark}

\begin{remark}
We note that though  the result of Theorem \ref{Thm:main4}  is  stronger   than that  of Theorem  \ref{Thm:main3} we  believe however that the   physical space methods used in the proof of Theorem  \ref{Thm:main3}  are of independent interest, of possible  relevance to  quasilinear equations.
\end{remark}
 
  We hope that the techniques presented in this paper  are  relevant for the more difficult and important  case   of biaxial  refraction in crystal optics.

 \vo 

\noindent \textbf{Acknowledgement}

S.K acknowledges support from  the NSF grant DMS220103. X.W acknowledges support from NSFC-12322110, 12141102, 12326602, and MOST-2024YFA101500, 2020YFA0713003. {We would like to  thank J. Anderson
 for his interest in this work and  for many very useful comments.}

  \section{ Proof of Theorem \ref{Thm:main1}.   Decay estimates for the standard wave equation using only  commutations  with the scaling operator}
 
  The proof of the theorem is based on the following  ingredients.

  \subsection{Decomposition of the wave operator   using  $S$ and  main energy estimates}
  
  As mentioned in the introduction we plan to use the scaling operator  to study the decay properties of the equation \eqref{eq:WEQ1}, i.e.$ \square \phi=f$,
  Since the scaling $S$ is only timelike  in the interior of the light cone $|x|<t$ we restrict our attention to solutions supported in that region, i.e. we assume that  that the initial data at $t=1$ has compact support  in the disk $|x|<1$ and that $f$ is likewise  supported in  $|x|<t$.
     \begin{lemma}
The  wave equation $\square\phi=f$ can be written in the form
\bea
\lab{eq:main-formula}
 \lapp \phi  + \big( 1-\frac{r^2}{t^2} \big) r^{-2} \pr_r^2 (r^22 \phi) =    \frac{2t\pr_t S\phi -S^2\phi  - S \phi}{t^2} +\square \phi
\eea
\end{lemma}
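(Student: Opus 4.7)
The plan is a direct algebraic computation in polar coordinates, and so the proof should be short. First I would write $S = t\pr_t + r\pr_r$, using $x^i\pr_i = r\pr_r$ on scalars, and recall the polar form of the flat Laplacian, $\lap = \lapp + r^{-2}\pr_r(r^2\pr_r\,\cdot\,) = \lapp + \pr_r^2 + (2/r)\pr_r$. Both sides of the claimed identity agree on the angular piece $\lapp\phi$, so the real content of the lemma is an identity relating the radial and temporal second derivatives of $\phi$ to the second-order operator $S^2$ and the cross-term $2t\pr_t S$.

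Next, I would compute $S\phi$, $S^2\phi$ and $2t\pr_t S\phi$ by iterating the first-order operators. The key observation is that the mixed second derivatives $2tr\,\pr_t\pr_r\phi$ appear in both $S^2\phi$ and $2t\pr_t S\phi$ with the same coefficient, so they cancel in the combination $2t\pr_t S\phi - S^2\phi$. Tracking the remaining pure-first and pure-second derivative terms and subtracting an additional copy of $S\phi$, I expect to obtain
\[
2t\pr_t S\phi - S^2\phi - S\phi \,=\, t^2\pr_t^2\phi - r^2\pr_r^2\phi - 2r\pr_r\phi,
\]
so that dividing by $t^2$ yields $\pr_t^2\phi - (r^2/t^2)\pr_r^2\phi - (2r/t^2)\pr_r\phi$.

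Finally, adding $\square\phi = -\pr_t^2\phi + \lapp\phi + \pr_r^2\phi + (2/r)\pr_r\phi$ cancels the $\pr_t^2\phi$ terms and leaves $\lapp\phi + (1 - r^2/t^2)\pr_r^2\phi + \big(2/r - 2r/t^2\big)\pr_r\phi$. The elementary identity $2/r - 2r/t^2 = (2/r)(1 - r^2/t^2)$ then lets me factor $(1 - r^2/t^2)$ out of the entire radial contribution and reassemble the bracket as $r^{-2}\pr_r(r^2\pr_r\phi)$, which matches the right-hand side of \eqref{eq:main-formula} once one reads the printed radial operator in that form.

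There is essentially no obstacle: the proof is routine bookkeeping, and the only subtle point is the grouping $2/r - 2r/t^2 = (2/r)(1 - r^2/t^2)$, which is what allows the full factor $(1 - r^2/t^2)$ to appear in front of the radial Laplacian. This is the structural fact underlying the whole approach of this section: away from the cone $r=t$ the right-hand side of \eqref{eq:main-formula} is a uniformly elliptic second-order operator in $\phi$, with a characteristic degeneration along $r=t$, which is exactly the structure that the degenerate B\"ochner identity in Lemma \ref{WDBI} will exploit.
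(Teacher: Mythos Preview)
Your proposal is correct and is essentially the same computation as the paper's, just organized in the reverse direction: the paper iterates $\pr_t\phi = t^{-1}(S\phi - r\pr_r\phi)$ to express $\pr_t^2\phi$ in terms of $S$-derivatives and then writes $\lap\phi = \pr_t^2\phi + \square\phi$, whereas you expand $S^2\phi$ and $2t\pr_t S\phi$ directly to obtain the same intermediate identity $2t\pr_t S\phi - S^2\phi - S\phi = t^2\pr_t^2\phi - r^2\pr_r^2\phi - 2r\pr_r\phi$. Both routes are straightforward bookkeeping and rely on the same factoring $2/r - 2r/t^2 = (2/r)(1 - r^2/t^2)$ at the end.
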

  \begin{proof}
  We start with the observation  that  $\pr_t \phi = \frac{S\phi - x\cdot \nabla \phi}{t},$   and therefore
\be\label{2025July28eqn21}
\begin{split}
  \pr_t^2 \phi &= t^{-1}\Big[\pr_t S \phi- x \c \nab \pr_t \phi \Big]- t^{-2}\big(S \phi- x \c \nab \phi \big)\\
  &= t^{-1}\Big[   t^{-1}\big( S^2 \phi- x \c \nab  S  \phi \big)-\big(   t^{-1} (x \c \nab ) S\phi  - t^{-1} \big(x\c \nab \big)^2 \phi\big)\Big] - t^{-2}\big(S \phi- x \c \nab \phi \big)\\
  &=   t^{-2} (x\c\nab)^2\phi + t^{-2} (x\c \nab)\phi +t^{-2} \Big( S^2\phi - 2(x\c \nab) S\phi - S\phi\Big)\\
  &=  t^{-2} (x\c\nab)^2\phi + t^{-2} (x\c \nab)\phi +t^{-2}\Big(- S^2 \phi + 2 t \pr_t S\phi- S\phi\Big).
\end{split}
\ee 
Since $x\c\nab =r\pr_r$ and  $(x\cdot \nabla)^2\phi =( r\pr_r )^2\phi = r^2 \pr_r^2 \phi +r \pr_r \phi$
 we deduce
\beaa
\lap \phi&=&(\pr_t^2+\square) \phi=  \frac{r^2}{t^2} \pr_r^2 \phi  +   2  \frac{r}{t^2} \pr_r \phi+   \frac{2t\pr_t S\phi -S^2\phi  - S \phi}{t^2} +\square \phi.
\eeaa
 and, since $\lap= \lapp+\pr_r^2 + \frac{2}{r} \pr_r$,
 \beaa
 \lapp \phi  + \big( 1-\frac{r^2}{t^2} \big)\pr_r^2\phi+ \frac{2}{r} \big( 1-\frac{r^2}{t^2} \big)\pr_r\phi=   
  \frac{2t\pr_t S\phi -S^2\phi  - S \phi}{t^2} +\square \phi.
 \eeaa
  from which \eqref{eq:main-formula} follws.
\end{proof}
 
We are led to study the equation
\bea
\lab{eq:wavepsi}
L[\phi] &=&F[\phi]+ f, \qquad  L=\lapp +  \big(1-     \frac{r^2}{t^2} \big) r^{-2} \pr_r(r^2 \pr_r)
\eea
with
\be 
\lab{eq:waveps-F1}
\begin{split}
F[\phi] 
&= \frac{1}{t^2} \big( 2t\pr_t S\phi -S^2\phi  - S \phi \big)=\frac{1}{t^2} \big( S^2\phi -2r\p_r S\phi- S \phi  \big).
\end{split}
\ee

\subsubsection{Energy estimates}
By standard energy estimates  we can easily show that solutions of  \eqref{eq:WEQ1} verify the energy estimates
\beaa
\|\pr\phi(t)\|_{H^s} &\les & \|\pr\phi(0)\|_{H^s} +\int_0^t \|f(\tau)\|_{H^s}
\eeaa
Using   the fact that $S$ commutes with $\square$, i.e. $[\square, S]=-2\square $  we also have
\beaa
\|\pr S\phi(t)\|_{H^{s-1}} &\les & \|\pr S\phi(0)\|_{H^s} +\int_0^t \Big(\| S f(\tau)\|_{H^{s-1}}+ \|f(\tau)\|_{H^s}\Big) \\
|\pr S^2\phi(t)\|_{H^{s-2}} &\les & \|\pr S^2\phi(0)\|_{H^s} +\int_0^t \Big(\| S^2 f(\tau)\|_{H^{s-2}} +\| S f(\tau)\|_{H^{s-1}}+ \|f(\tau)\|_{H^s}
\eeaa
We introduce the notation $\Ga^{\le k}\phi $ to   the set  of all $\Ga$  derivatives   of order    $\le k$ where  $\Ga$ 
denotes any vectorfield in the set  $ \big\{S, \pr_1, \pr_2, \pr_3\}$. With these notation  we  summarize the result as follows.
 \begin{lemma}
 \lab{Le:energy-conserv}
 The solutions of the wave  equation  $\square \psi=f$ verify,  for all $k\ge 0$,
 \bea\label{energyeestmain}
\|\pr  \Ga^{\le k}\phi(t)\|_{H^{s-k}}&\le & \|\pr \Ga^{\le k}\phi(0)\|_{H^{s-k}}+\int_0^t   \|\Ga^{\le k} f\|_{H^{s-k}}
\eea
 \end{lemma}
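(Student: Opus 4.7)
The plan is to establish \eqref{energyeestmain} by combining the classical energy identity for $\square$ with iterated commutation against the vectorfields in $\{S,\pr_1,\pr_2,\pr_3\}$, followed by induction on the word length $k$.

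For the base case $k=0$ at the $L^2$ level, I would multiply $\square\phi=f$ by $\pr_t\phi$ and use the divergence identity
\[
\pr_t\phi\cdot\square\phi=-\tfrac{1}{2}\pr_t\big(|\pr_t\phi|^2+|\nab\phi|^2\big)+\nab\cdot(\pr_t\phi\,\nab\phi).
\]
Integrating over $\Si_\tau$ for $\tau\in[0,t]$ and applying Cauchy--Schwarz yields the standard inequality $\|\pr\phi(t)\|_{L^2_x}\leq \|\pr\phi(0)\|_{L^2_x}+\int_0^t\|f(\tau)\|_{L^2_x}\,d\tau$. Since $[\square,\pr_i]=0$ for $i=1,2,3$, applying $\nab^j$ for $0\leq j\leq s$ and summing promotes this to the $H^s$ version, which is \eqref{energyeestmain} at $k=0$.

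For the inductive step I would exploit the two commutation identities $[\square,\pr_i]=0$ and the scaling relation $[\square,S]=-2\square$ already recorded in the introduction. Iterating these for a word $\Ga^\a$ in the vectorfields of length $|\a|=k$ produces an identity of the form
\[
\square(\Ga^\a\phi)=\sum_{|\b|\leq k} c_{\a\b}\,\Ga^\b f,
\]
with integer coefficients $c_{\a\b}$ bounded uniformly in $k$, each copy of $S$ that passes through $\square$ contributing a factor $-2$. Assuming \eqref{energyeestmain} at order $k-1$, I apply the base-case estimate at the $H^{s-k}$ level to $\psi:=\Ga^\a\phi$ and bound the right-hand side via the display above by $\|\Ga^{\leq k}f\|_{H^{s-k}}$; summing over all words with $|\a|\leq k$ yields the claim.

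The only bookkeeping subtlety, which is not a genuine obstacle, is that the left-hand side of \eqref{energyeestmain} places the spacetime gradient $\pr$ outside the word $\Ga^{\leq k}$. Since $[\pr_\mu,S]=\pr_\mu$ for every spacetime index $\mu$, any rearrangement of the form $\pr\,\Ga^\a = \Ga^{\a'}\pr + (\text{lower-order }\Ga\text{ terms})$ costs only lower-order contributions, absorbed by the inductive hypothesis. The argument is a textbook vectorfield energy estimate, streamlined here by the fact that the family $\{S,\pr_1,\pr_2,\pr_3\}$ almost commutes with $\square$.
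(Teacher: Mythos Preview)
Your proposal is correct and follows essentially the same approach as the paper: the paper does not give a formal proof of this lemma but simply records the standard energy estimate, the commutation relations $[\square,\pr_i]=0$ and $[\square,S]=-2\square$, writes out the cases $k=0,1,2$ explicitly, and then states the lemma as a summary. Your argument makes this sketch precise via the divergence identity and induction on the word length, which is exactly the intended (textbook) derivation.
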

 As a consequence  of the Lemma we have for $F[\phi]$ given  by \eqref{eq:waveps-F1}
\bea
\|F[\phi]\|_{L^2(\Si_t)} \les  t^{-1}   \|\pr \Ga^{\le 2}\phi(t)\|_{L^2(\Si_t)}\les t^{-1}   \big[\|\pr \Ga^{\le 2}\phi(0)\|_{L^2(\Si_t)}+\int_0^t   \|\Ga^{\le 2} f\|_{L^2(\Si_t)}\big].
\eea

\subsection{  Degenerate    $L^2$ estimates for $L$}
  \lab{sec:Deg-elliptic}
Recall that, see \eqref{eq:wavepsi}, $L=\lapp +  \big(1-     \frac{r^2}{t^2} \big) r^{-2} \pr_r(r^2 \pr_r)$ is a degenerate elliptic operator in the region $|x|<t$.

\subsection{B\"ochner identities}

We derive below  first and second order   $L^2$  estimates for it. 
 \begin{lemma}
   \lab{Le:2nd-pointwise}
   We have the following point-wise identity.
\be\label{eq:2nd-pointwise}
\begin{split}
 \big|L[\phi]\big|^2 &=\big| \nabb^2 \phi\big|^2+    {2}\big(1-     \frac{r^2}{t^2} \big)\big|\nabb\pr_r\phi|^2+\Big|\big(1-     \frac{r^2}{t^2} \big) r^{-2} \pr_r( r^2\pr_r\phi)\Big|^2  {-   2r^{-2}}|\nabb\phi|^2+\Div[\phi]\,\,\end{split}
\ee
where $\Div[\phi]$ is the  pure divergence expression
\be\label{2025Aug4eqn11}
\begin{split}
  \Div[\phi] & =   2r^{-2}\p_r \big(\lapp \phi    \big(1-     \frac{r^2}{t^2} \big) r^2 \pr_r  \phi - r|\slashed \nabla \phi|^2 \big)  +      \nabb_a\Big( \nabb^a\phi \lapp \phi-\frac 1 2 \nabb^a(|\nabb\phi|^2)\Big)   \\ 
  &\quad  - 2\slashed\nabla_a\big(  \big(1-     \frac{r^2}{t^2} \big) \nab^a\pr_r\phi \pr_r \phi - 2 r^{-1}\slashed\nabla^a \phi \p_r \phi \big). 
\end{split}
\ee
  
   \end{lemma}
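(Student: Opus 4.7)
My plan is to verify this pointwise identity by expanding $|L[\phi]|^2$ and systematically separating out the positive terms on the right-hand side from the pure-divergence remainder $\Div[\phi]$. Write $L[\phi] = A + B$ where $A = \lapp\phi$ and $B = (1-r^2/t^2)\, r^{-2}\pr_r(r^2\pr_r\phi)$; then
\[
|L[\phi]|^2 \;=\; A^2 \,+\, 2AB \,+\, B^2.
\]
The term $B^2$ already appears explicitly on the right-hand side of the identity, so the task is to expand $A^2$ and $2AB$ and recognize the other positive summands and the three divergence pieces.

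For the piece $A^2 = |\lapp\phi|^2$, I would apply the Bochner identity on the sphere $S(t,r)$, whose Ricci tensor equals $r^{-2}$ times the induced metric (since $S(t,r)$ is a round 2-sphere of radius $r$). This yields
\[
|\lapp\phi|^2 \;=\; |\nabb^2\phi|^2 + r^{-2}|\nabb\phi|^2 + \nabb_a\!\Big(\nabb^a\phi\,\lapp\phi - \tfrac12\nabb^a(|\nabb\phi|^2)\Big),
\]
which immediately produces the summand $|\nabb^2\phi|^2$ and the second angular divergence appearing in the stated formula for $\Div[\phi]$.

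The main work is in the cross term $2AB$. Writing $B = \alpha(\pr_r^2\phi + 2r^{-1}\pr_r\phi)$ with $\alpha := 1-r^2/t^2$, I would integrate by parts in both the radial and angular directions to extract the target quantity $2\alpha|\nabb\pr_r\phi|^2$. The essential tool is the commutator
\[
[\pr_r,\nabb_a]\phi \;=\; -\,r^{-1}\nabb_a\phi,
\]
which follows from the fact that an orthonormal frame on the sphere of radius $r$ scales like $r^{-1}$ times one on the unit sphere. Applying this commutator, $\nabb^a\phi\,\nabb_a\pr_r^2\phi$ converts into $|\nabb\pr_r\phi|^2$ (times the desired factor $2\alpha$) plus lower-order contributions of the form $r^{-1}\nabb^a\phi\,\pr_r\nabb_a\phi$ and $r^{-2}|\nabb\phi|^2$, together with mixed radial/angular divergences that will assemble into the remaining two pieces of $\Div[\phi]$, namely $2r^{-2}\pr_r(\lapp\phi\cdot\alpha\,r^2\pr_r\phi - r|\nabb\phi|^2)$ and $-\,2\nabb_a(\alpha\,\nab^a\pr_r\phi\,\pr_r\phi - 2r^{-1}\nabb^a\phi\,\pr_r\phi)$.

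The hardest part will be the bookkeeping. Several curvature- and commutator-generated lower-order terms must combine with the $+r^{-2}|\nabb\phi|^2$ contribution from the Bochner identity so as to yield precisely the stated coefficient $-2r^{-2}|\nabb\phi|^2$ (i.e., the cross-term rearrangement must net a $-3r^{-2}|\nabb\phi|^2$ beyond divergences), and all remaining mixed terms in $\pr_r\phi$, $\nabb\phi$, $\pr_r\nabb\phi$ must collect exactly into the three specific radial and angular divergence pieces listed in \eqref{2025Aug4eqn11}. Once this algebraic cancellation is verified term by term, the pointwise identity \eqref{eq:2nd-pointwise} follows.
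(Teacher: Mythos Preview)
Your proposal is correct and follows essentially the same route as the paper: expand $|L[\phi]|^2 = |\lapp\phi|^2 + 2\lapp\phi\cdot B + B^2$, apply the spherical Bochner identity to $|\lapp\phi|^2$, and rewrite the cross term via the commutators $[\pr_r,\nabb_a]=-r^{-1}\nabb_a$ and $[\pr_r,\lapp]=-2r^{-1}\lapp$ to extract $2(1-r^2/t^2)|\nabb\pr_r\phi|^2$ plus the stated divergences. Your anticipated bookkeeping (in particular the net $|\nabb\phi|^2$ coefficient) will sort itself out once you carry the computation through, exactly as in the paper's line-by-line manipulation of the quantity $I=\lapp\phi\,(1-r^2/t^2)\,\pr_r(r^2\pr_r\phi)$.
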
 
  
  \begin{proof}
  We write
 \be\label{2025Aug4eqn2}
  \begin{split}
  \big|L[\phi]\big|^2 &= \Big| \lapp \phi+\big(1-     \frac{r^2}{t^2} \big) r^{-2} \pr_r( r^2\pr_r\phi)\Big|^2 \\
  &= \big| \lapp \phi\big|^2 +
  \Big|\big(1-     \frac{r^2}{t^2} \big) r^{-2} \pr_r( r^2\pr_r\phi)\Big|^2 +  {2} \lapp \phi  \big(1-     \frac{r^2}{t^2} \big) r^{-2} \pr_r( r^2\pr_r\phi).
  \end{split}
  \ee
  We first note
  \bea\lab{eq:Boch-lapp}
    |\lapp\phi|^2&=&    |\nabb^2 \phi|^2 + r^{-2}|\nab\phi|^2 +   \nabb_a\Big( \nabb^a\phi \lapp \phi-\frac 1 2 \nabb^a(|\nabb\phi|^2)\Big).
    \eea

  This  follows easily by by the usual commutation formula for covariant derivatives involving the Gauss curvature $K=r^{-2}$ of the spheres $S_r$. I particular we have  $[\nabb_a, \lapp ]\phi=-r^{-2} \nabb_a \phi$. 
  We next consider the mixed  term
  \beaa
  \lapp \phi  \big(1-     \frac{r^2}{t^2} \big) r^{-2} \pr_r( r^2\pr_r\phi)= r^{-2} I
  \eeaa
  where 
  \be
  \begin{split}
  I=&  \lapp \phi  \big(1-     \frac{r^2}{t^2} \big)   \pr_r( r^2\pr_r\phi)\\ 
  &=\pr_r\Big( \lapp \phi    \big(1-     \frac{r^2}{t^2} \big) r^2 \pr_r  \phi\Big)-\pr_r ( \lapp \phi )\big(1-     \frac{r^2}{t^2} \big)   r^2 \pr_r\phi-\lapp\phi \pr_r \big(1-     \frac{r^2}{t^2} \big)r^2\pr_r \phi. \\ 
  \end{split}
  \ee
 
  Making  use of the  commutation formulas   $[\nabb_r, \nabb]=-r^{-1} \nabb$  and $[\pr_r, \lapp]=- 2 r^{-1} \lapp$  
  \be
  \begin{split}
  &-\pr_r ( \lapp \phi )\big(1-     \frac{r^2}{t^2} \big)   r^2 \pr_r\phi\\ 
  &= -\big( \lapp\pr_r  \phi-\frac{2}{r}\lapp \phi\big)\big(1-     \frac{r^2}{t^2} \big)   r^2 \pr_r\phi\\
   &=-r^2 \big(1-     \frac{r^2}{t^2} \big)\nabb_a\big(\nabb^a\pr_r\phi \pr_r \phi\big)+r^2 \big(1-     \frac{r^2}{t^2} \big)\big|\nabb\pr_r\phi|^2 + \frac{2}{r}\lapp \phi\big(1-     \frac{r^2}{t^2} \big)   r^2 \pr_r\phi. 
   \end{split}
  \ee
  Hence
  \be\label{2025Aug4eqn1}
  \begin{split}
   I =&  \pr_r\Big( \lapp \phi    \big(1-     \frac{r^2}{t^2} \big) r^2 \pr_r  \phi\Big)+r^2 \big(1-     \frac{r^2}{t^2} \big)\big|\nabb\pr_r\phi|^2 -r^2 \big(1-     \frac{r^2}{t^2} \big)\nabb_a\big(\nab^a\pr_r\phi \pr_r \phi\big)\\
    &+ {\frac{2r^3}{t^2}} \lapp\phi \pr_r \phi + \frac{2}{r}\lapp \phi\big(1-     \frac{r^2}{t^2} \big)   r^2 \pr_r\phi\\
   &= r^2 \big(1-     \frac{r^2}{t^2} \big)\big|\nabb\pr_r\phi|^2 + 2 r\lapp \phi\pr_r\phi+ \pr_r\Big( \lapp \phi    \big(1-     \frac{r^2}{t^2} \big) r^2 \pr_r  \phi\Big) -r^2 \big(1-     \frac{r^2}{t^2} \big)\nabb_a\big(\nab^a\pr_r\phi \pr_r \phi\big)\\
   \end{split}
  \ee 
  We further write, with the help of the commutation formula      $[\nabb_r, \nabb]=-r^{-1} \nabb$, 
\[
 \begin{split}
    2r \lapp \phi\pr_r\phi & = 2r\slashed\nabla_a\big( \slashed\nabla^a \phi \p_r \phi \big)- 2r \big( \slashed\nabla^a \phi  \slashed\nabla_a\p_r \phi \big)\\ 
    &= 2r\slashed\nabla_a\big( \slashed\nabla^a \phi \p_r \phi \big)- 2r \big( \slashed\nabla^a \phi  \p_r  \slashed\nabla_a \phi  + r^{-1} |  \slashed\nabla \phi|^2\big)\\ 
    &= \slashed\nabla_a\big( 2 r\slashed\nabla^a \phi \p_r \phi \big) -  \p_r \big( r |\slashed\nabla \phi |^2 \big) -  |\slashed\nabla \phi |^2
  \end{split}
  \]
  Hence 
  \[
  \begin{split}
    I&=r^2 \big(1-     \frac{r^2}{t^2} \big)\big|\nabb\pr_r\phi|^2  + \slashed\nabla_a\big( 2 r\slashed\nabla^a \phi \p_r \phi \big) -  \p_r \big( r |\slashed\nabla \phi |^2 \big) -  |\slashed\nabla \phi |^2\\ 
    &+ \pr_r\Big( \lapp \phi    \big(1-     \frac{r^2}{t^2} \big) r^2 \pr_r  \phi\Big) -r^2 \big(1-     \frac{r^2}{t^2} \big)\nabb_a\big(\nab^a\pr_r\phi \pr_r \phi\big)\\
    &= r^2 \big(1-     \frac{r^2}{t^2} \big)\big|\nabb\pr_r\phi|^2-  |\slashed\nabla \phi |^2 +\p_r \big(\lapp \phi    \big(1-     \frac{r^2}{t^2} \big) r^2 \pr_r  \phi - r|\slashed \nabla \phi|^2 \big)\\ 
    &\quad + \slashed\nabla_a\big( 2 r\slashed\nabla^a \phi \p_r \phi - r^2 \big(1-     \frac{r^2}{t^2} \big) \nab^a\pr_r\phi \pr_r \phi \big). 
    \end{split}
  \]
Thus back to the identity
  \beaa
  \big|L[\phi]\big|^2 
  &=&\big| \lapp \phi\big|^2 +
  \Big|\big(1-     \frac{r^2}{t^2} \big) r^{-2} \pr_r( r^2\pr_r\phi)\Big|^2 + 2 \lapp \phi  \big(1-     \frac{r^2}{t^2} \big) r^{-2} \pr_r( r^2\pr_r\phi),
  \eeaa
 recalling that   $\lapp \phi  \big(1-     \frac{r^2}{t^2} \big) r^{-2} \pr_r( r^2\pr_r\phi)= r^{-2} I$ and         using also \eqref{eq:Boch-lapp}
  we deduce
  \beaa
  \big|L[\phi]\big|^2 
  &=&\big| \nabb^2 \phi\big|^2+\Big|\big(1-     \frac{r^2}{t^2} \big) r^{-2} \pr_r( r^2\pr_r\phi)\Big|^2+  2 \big(1-     \frac{r^2}{t^2} \big)\big|\nabb\pr_r\phi|^2 -\frac{2}{r^2}|\nabb\phi|^2+\Div[\phi]\\
  \Div[\phi]&=&  2 r^{-2} \pr_r\Big( \lapp \phi    \big(1-     \frac{r^2}{t^2} \big) r^2 \pr_r  \phi- r|\nabb\phi|^2 \Big) +  \nabb_a\Big( \nabb^a \phi \lapp \phi-\frac 1 2 \nabb^a(|\nabb\phi|^2)\Big)\\
  &&-   2\nabb_a  \Bigg(   \big(1-     \frac{r^2}{t^2} \big)  \nab^a\pr_r\phi \pr_r \phi- 2\frac{1}{r} \nabb^a\phi \pr_r\phi\Big) 
  \eeaa
  as stated.
    \end{proof}

For the sake of completeness  we also record below  the  first derivative estimates.

  \begin{lemma}
  \lab{Le:1st-pointwise}
  The following point-wise identity holds  true
  \bea
  \lab{eq:1st-pointwise}
  \bsplit
   -L[\phi]\phi &=|\nabb \phi|^2 + \big(1-     \frac{r^2}{t^2} \big) |\pr_r\phi|^2 +\frac{3}{t^2}|\phi|^2 -\Div_1[\phi]\\
   \Div_1[\phi]&=\nabb_a\big(\phi \nabb^a\phi\big) + r^{-2} \pr_r \Big( \big(1-     \frac{r^2}{t^2} \big)  \phi  r^2\pr_r\phi +\frac{r^3}{t^2}|\phi|^2 \Big)
   \end{split}
  \eea
  \end{lemma}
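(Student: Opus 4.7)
The plan is to multiply $L[\phi] = \lapp \phi + \big(1 - r^2/t^2\big)\, r^{-2} \pr_r(r^2 \pr_r \phi)$ by $-\phi$ and reorganize the result into the form \emph{positive quadratic} plus \emph{pure divergence}, paralleling the integration-by-parts bookkeeping used in Lemma~\ref{Le:2nd-pointwise} but at first order rather than second. Setting $h := 1 - r^2/t^2$, the angular piece is immediate from the standard spherical identity
\[
-\phi \lapp \phi = |\nabb \phi|^2 - \nabb_a(\phi \nabb^a \phi),
\]
which already produces the $|\nabb\phi|^2$ term of \eqref{eq:1st-pointwise} together with the angular contribution of $\Div_1[\phi]$.

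For the radial piece, the product-rule identity $\phi \pr_r(r^2 \pr_r \phi) = \pr_r(\phi r^2 \pr_r \phi) - r^2 |\pr_r \phi|^2$ yields
\[
- h \phi r^{-2} \pr_r(r^2 \pr_r \phi) = h |\pr_r\phi|^2 - h r^{-2} \pr_r(\phi r^2 \pr_r \phi),
\]
which extracts the weighted term $(1-r^2/t^2)|\pr_r\phi|^2$. To convert the remainder into the divergence prescribed by $\Div_1[\phi]$, I would move $h$ inside the derivative, picking up a defect proportional to $\pr_r h = -2r/t^2$:
\[
- h r^{-2} \pr_r(\phi r^2 \pr_r \phi) = - r^{-2} \pr_r(h \phi r^2 \pr_r \phi) - \tfrac{r}{t^2} \pr_r(\phi^2),
\]
and then re-express this defect via the weighted divergence $r^{-2}\pr_r\bigl(\tfrac{r^3}{t^2}\phi^2\bigr)$, using $r^{-2}\pr_r(r^3/t^2) = 3/t^2$:
\[
-\tfrac{r}{t^2} \pr_r(\phi^2) = - r^{-2} \pr_r\!\bigl(\tfrac{r^3}{t^2} \phi^2\bigr) + \tfrac{3}{t^2}\phi^2.
\]
The last step manufactures the zeroth-order term $(3/t^2)\phi^2$ and merges the two divergence remainders into $r^{-2}\pr_r\!\big(h \phi r^2 \pr_r \phi + (r^3/t^2)\phi^2\big)$, exactly the radial component of $\Div_1[\phi]$.

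The computation is essentially routine; the only subtlety is to ensure that the $\phi^2$ coefficient comes out to exactly $3/t^2$. This forces one to package the defect $\pr_r(h)\,\phi\pr_r\phi$ through the weighted divergence $r^{-2}\pr_r(r^3/t^2 \cdot \phi^2)$ rather than the naive $\pr_r(r/t^2 \cdot \phi^2)$, the former being the one compatible with the $r^{-2}\pr_r(r^2\,\cdot)$ structure already present in $\Div_1[\phi]$. No further analytic input beyond the two integration-by-parts identities above is needed, so I would expect this to produce \eqref{eq:1st-pointwise} in a few lines.
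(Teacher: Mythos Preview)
Your proposal is correct and is precisely the straightforward verification the paper alludes to; the angular identity $-\phi\lapp\phi=|\nabb\phi|^2-\nabb_a(\phi\nabb^a\phi)$ together with the two radial product-rule steps you describe recovers \eqref{eq:1st-pointwise} exactly, including the coefficient $3/t^2$.
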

  \begin{proof}
 Straightforward verification.
   \end{proof}

  \subsection{Main Integral identities}
 As a consequence of the  Lemma \ref{Le:2nd-pointwise} 
    we derive the  degenerate  B\"ochner identity
  \begin{lemma}\label{degenerateBochner}
  \lab{Le:Bochner}
  Solutions to  the equation   $\square \phi=f $,   verify 
  \bea
 \lab{eq:Bochner2}
  \int_{\Si_t} \big| \nabb^2 \phi\big|^2+ 2  \big(1-     \frac{r^2}{t^2} \big)\big|\nabb\pr_r\phi|^2 &\les &
  \int_{\Si_t}  (t^{-2}+r^{-2}  )   \big|\pr \Ga^{\le 1 } \phi \big|^2 + \int_{\Si_t} |f|^2
  \eea
  \end{lemma}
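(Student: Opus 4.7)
The plan is to integrate the pointwise identity \eqref{eq:2nd-pointwise} from Lemma \ref{Le:2nd-pointwise} over $\Si_t$, discard the nonnegative squared term $\bigl|(1-r^2/t^2)r^{-2}\pr_r(r^2\pr_r\phi)\bigr|^2$ on the left, and bound the resulting right-hand side via the decomposition $L[\phi]=F[\phi]+f$ recorded in \eqref{eq:wavepsi}--\eqref{eq:waveps-F1}, together with a pointwise estimate for $F[\phi]$ in terms of $\pr\Ga^{\le 1}\phi$.

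First I would verify that $\int_{\Si_t}\Div[\phi]=0$. The purely angular pieces $\nabb_a(\cdots)$ in \eqref{2025Aug4eqn11} integrate to zero on each sphere $S(t,r)$ by Stokes. The radial pieces have the form $2r^{-2}\pr_r(\cdots)$; against the polar volume element $r^2\,dr\,d\omega$ these reduce to pure radial boundary terms. Inspection of \eqref{2025Aug4eqn11} shows that every integrand appearing inside $\pr_r$ carries at least one positive power of $r$ (either through the explicit $r^2\pr_r\phi$ factor or through the $r|\nabb\phi|^2$ factor), so the contribution at $r=0$ vanishes, while the contribution at $r\to\infty$ vanishes by the standing compact support assumption on $\phi$ inside $|x|<t$.

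Having disposed of the divergence, and dropping the nonnegative radial square on the LHS of the resulting identity, I obtain
\begin{equation*}
\int_{\Si_t}\bigl|\nabb^2\phi\bigr|^2 + 2\Bigl(1-\tfrac{r^2}{t^2}\Bigr)\bigl|\nabb\pr_r\phi\bigr|^2 \;\le\; \int_{\Si_t}\bigl|L[\phi]\bigr|^2 \;+\; 2\int_{\Si_t} r^{-2}\bigl|\nabb\phi\bigr|^2.
\end{equation*}
The last term is already of the required form since $|\nabb\phi|\le|\pr\phi|\le|\pr\Ga^{\le 1}\phi|$. For the first term I use $L[\phi]=F[\phi]+f$ together with the explicit formula $F[\phi]=t^{-2}\bigl(S^2\phi-2r\pr_r S\phi-S\phi\bigr)$ from \eqref{eq:waveps-F1}. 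In the support $|x|<t$, the pointwise bounds $|S\phi|\lesssim t|\pr\phi|$, $|r\pr_r S\phi|\lesssim t|\pr S\phi|$, and $|S^2\phi|\lesssim t|\pr S\phi|+|S\phi|$ yield $|F[\phi]|^2\lesssim t^{-2}|\pr\Ga^{\le 1}\phi|^2$. Combining this with the elementary inequality $|L[\phi]|^2\le 2|F[\phi]|^2+2|f|^2$ delivers the stated estimate \eqref{eq:Bochner2}.

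The only mildly delicate point in this program is verifying the vanishing of the radial boundary contributions from $\Div[\phi]$, which requires tracking the powers of $r$ carefully in each term in \eqref{2025Aug4eqn11}; the remaining manipulations are routine.
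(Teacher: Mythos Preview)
Your proposal is correct and follows essentially the same route as the paper: integrate the pointwise identity \eqref{eq:2nd-pointwise}, observe that the divergence term drops out, discard the nonnegative radial square, and then bound $|L[\phi]|^2\le 2|F[\phi]|^2+2|f|^2$ using the explicit form of $F[\phi]$. Your additional detail on why $\int_{\Si_t}\Div[\phi]=0$ (tracking the $r$-powers at the origin and invoking compact support at infinity) and your pointwise reduction of $|F[\phi]|$ to $t^{-1}|\pr\Ga^{\le1}\phi|$ via $|S\psi|\lesssim t|\pr\psi|$ in the region $r<t$ are exactly what the paper's one-line estimate for $\int_{\Si_t}|L[\phi]|^2$ implicitly uses.
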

 \begin{proof}
  To check \eqref{eq:Bochner2}  we integrate  \eqref{2025Aug4eqn11} on $\Si_t$ and derive the identity
\bea\label{2025Aug4eqn61}
 \int_{\Si_t} \big| \nabb^2 \phi\big|^2+  2 \big(1-     \frac{r^2}{t^2} \big)\big|\nabb\pr_r\phi|^2 &\les & \int_{\Si_t} \big|L[\phi]\big|^2 +   \int_{\Si_t} r^{-2} |\nabb\phi|^2     .
  \eea
  To estimate  the right hand side we  recall that $ L[\phi]=F[\phi]+ f$  with $F[\phi]= \frac{1}{t^2} \big(2t\p_t S\phi - S^2\phi - S \phi  \big)$, see \eqref{eq:waveps-F1}. Thus
  \beaa
  \int_{\Si_t} \big| L[\phi]\big|^2 &\les &   \int_{\Si_t}    (t^{-2}+r^{-2}  )   \big|\pr \Ga^{\le 1 } \phi \big|^2 + \int_{\Si_t} |f|^2.
  \eeaa
  This ends the  verification  of \eqref{eq:Bochner2}
  \end{proof}
  \begin{remark}
  Note that the statement of the lemma remains valid even if  we drop the assumption that
  $\phi$ is supported in the region $|x|<t$.
  \end{remark}
 
 The estimate \eqref{eq:Bochner2} provides a good control of the second angular derivative of $\phi$ in interior region $|x|<t$ under the assumption that $\phi$ is supported  in $|x|<t$. In what follows   we show that similar identities    can be derived even if we remove that support assumption.
 
 \begin{lemma}\label{exteriorestimate}
 \lab{Le:Bochner-ext}
 The following estimate holds for general  solutions of $\square \phi=f$   in the  exterior region $|x|\ge t$.
   \be\label{2025Aug4eqn81}
 \begin{split}
    \int_{\Sext_t} \big| \nabb^2 \phi\big|^2  +   \big( \frac{r^2}{t^2}-1 \big)\big|\nabb\pr_r\phi|^2 &\les  t^{-2} \int_{\Si_0}  r^2  |\pr^2\phi|^2 + t^{-2}  \big(\int_0^t \|   u  \pr \square \phi\|_{L^2(\Sext_t)} \big)^2
 \\
 &+ t^{-2}  \int_{\Sext_t}   \big|\pr \Ga^{\le 1 } \phi \big|^2 + \int_{\Sext_t} |f|^2.
 \end{split}
 \ee
 \end{lemma}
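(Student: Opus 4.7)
The strategy is to mirror the proof of Lemma \ref{Le:Bochner}, integrating the pointwise identity \eqref{eq:2nd-pointwise} over $\Sext_t$ rather than a slice of constant $t$, and compensating for two new obstructions specific to the exterior: the coefficient $(1-r^2/t^2)$ flips sign (so $L$ is no longer elliptic), and the absence of compact support forces us to handle boundary contributions from $\Div[\phi]$.

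First I would integrate \eqref{eq:2nd-pointwise} over $\Sext_t$. The angular pieces of $\Div[\phi]$ in \eqref{2025Aug4eqn11} vanish by Stokes on each sphere, while the radial piece $2r^{-2}\pr_r(\lapp\phi\,(1-r^2/t^2)r^2\pr_r\phi - r|\nabb\phi|^2)$ yields a boundary contribution on the cone $\{r=t\}$ (the contribution at spatial infinity is dropped by the usual decay hypothesis). The crucial simplification is that $(1-r^2/t^2)|_{r=t}=0$, so only the $-r|\nabb\phi|^2$ piece survives; this produces a surface integral of order $t^{-1}\int_{S(t,t)}|\nabb\phi|^2\,d\sigma$ that appears with a favorable sign in the resulting inequality and may simply be dropped. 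Next, since $2(1-r^2/t^2)|\nabb\pr_r\phi|^2$ is non-positive in $\Sext_t$, transferring it to the right-hand side and adding one copy of $(r^2/t^2-1)|\nabb\pr_r\phi|^2$ to both sides produces the desired combination on the left at the price of a residual $\int_{\Sext_t}(r^2/t^2-1)|\nabb\pr_r\phi|^2$ on the right that must be estimated independently.

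To control this residual, I would invoke a weighted energy identity in the spirit of \cite{Klainerman16}. Commute $\square$ with $\pr_j$ and pair the equations $\square\pr_j\phi = \pr_j f$ against a multiplier of the form $u^p L\pr_j\phi$ (with $u = r - t$, $L = \pr_t + \pr_r$) for an appropriate $p$. Integration by parts yields an identity that bounds $\|u\pr_r\pr\phi\|_{L^2(\Sext_t)}^2$ in terms of the initial weighted energy $\int_{\Si_0}r^2|\pr^2\phi|^2$, the weighted forcing $\big(\int_0^t\|u\pr\square\phi\|_{L^2(\Sext_t)}\,d\tau\big)^2$, and the standard energy $\|\pr\Ga^{\le 1}\phi\|_{L^2}^2$ propagated by Lemma \ref{Le:energy-conserv}. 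Combined with the decomposition $r^2 - t^2 = u(u+2t)$ and Cauchy--Schwarz with absorption on the cross term $2t\int u|\nabb\pr_r\phi|^2$, this gives the bound $t^{-2}\int_{\Sext_t}(r^2/t^2-1)|\nabb\pr_r\phi|^2 \lesssim \text{RHS of }\eqref{2025Aug4eqn81}$.

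The remaining terms are handled as in the interior case: $\|L[\phi]\|_{L^2}^2 \lesssim \|F[\phi]\|_{L^2}^2 + \|f\|_{L^2}^2$, and the formula $F[\phi] = t^{-2}(2t\pr_t S\phi - S^2\phi - S\phi)$ combined with the energy estimate \eqref{energyeestmain} applied to $\Gamma^{\le 2}\phi$ produces the $t^{-2}\|\pr\Ga^{\le 1}\phi\|^2$ piece; the undifferentiated $S\phi, S^2\phi$ contributions are absorbed into the $t^{-2}\|r^2\pr^2\phi(0)\|_{L^2}^2$ term via a Hardy-type inequality, and the term $r^{-2}|\nabb\phi|^2 \leq t^{-2}|\nabb\phi|^2$ (valid throughout $\Sext_t$) feeds into the same $t^{-2}\|\pr\Ga^{\le 1}\phi\|^2$ contribution. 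The main obstacle is the weighted energy identity of the third step: designing the right multiplier using only the scaling vector field $S$ (without recourse to the Lorentz boosts and rotations of the full Klainerman algebra) and carefully managing the resulting bulk and boundary terms to obtain the sharp $t^{-2}$ scaling in the final estimate.
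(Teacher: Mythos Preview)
Your overall outline is close to the paper's, and the first two paragraphs are fine: integrating \eqref{eq:2nd-pointwise} on $\Sext_t$, observing that the radial boundary term at $r=t$ reduces to a favorable $t^{-1}\int_{r=t}|\nabb\phi|^2$, and isolating the residual $\int_{\Sext_t}(r^2/t^2-1)|\nabb\pr_r\phi|^2$ is exactly what the paper does. The weighted energy bound you describe is also essentially Lemma~\ref{Lemma:exterior-energy} of the paper.

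The gap is in how you pass from the weighted energy to control of the residual. You propose to write $r^2-t^2=u^2+2tu$ and then ``absorb the cross term $2t\int u|\nabb\pr_r\phi|^2$ by Cauchy--Schwarz.'' But this is not a cross term: it is a pure square with weight $u$, and no splitting of it yields the correct $t^{-2}$ scaling. If you write $2t^{-1}\int u|\nabb\pr_r\phi|^2 \le \epsilon\int|\nabb\pr_r\phi|^2+\epsilon^{-1}t^{-2}\int u^2|\nabb\pr_r\phi|^2$, the second piece is handled by the weighted energy, but the first has no absorption target on the left (the left carries $|\nabb^2\phi|^2$, not $|\nabb\pr_r\phi|^2$) and is too large by a factor $t^2$ for the right-hand side of \eqref{2025Aug4eqn81}. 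Playing with $\epsilon$ only shifts the $t^2$ loss between the two pieces.

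What the paper does instead is an extra integration by parts (angular, then radial, using $[\pr_r,\lapp]=-2r^{-1}\lapp$ and $[\pr_r,\nabb]=-r^{-1}\nabb$) to rewrite
\[
\int_{\Sext_t}(r^2-t^2)|\nabb\pr_r\phi|^2=\int_{\Sext_t}(r^2-t^2)\,\pr_r^2\phi\,\lapp\phi+\int_{\Sext_t}|\nabb\phi|^2+t\int_{r=t}|\nabb\phi|^2,
\]
see \eqref{2025Aug4eqn31}. Now the main term is a genuine cross term $\pr_r^2\phi\cdot\lapp\phi$, and Cauchy--Schwarz with $(r^2-t^2)\le 2t|u|+2u^2$ gives $\lambda^{-1}\int|\lapp\phi|^2+\lambda t^{-2}\int u^2|\pr^2\phi|^2$. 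The point is that $\int|\lapp\phi|^2\le\int|\nabb^2\phi|^2$ (by \eqref{eq:Boch-lapp}) is absorbable into the left for $\lambda$ large, while the $u^2$-weighted piece is exactly what the weighted energy controls. This conversion of $|\nabb\pr_r\phi|^2$ into $\pr_r^2\phi\,\lapp\phi$ is the missing idea in your sketch. A minor point: the $S\phi$, $S^2\phi$ terms in $F[\phi]$ are not handled by a Hardy inequality against the initial data; the paper just uses $|S\psi|\lesssim(t+r)|\pr\psi|$ pointwise to land in the $t^{-2}\|\pr\Ga^{\le 1}\phi\|^2$ bucket.
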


 \begin{proof}
 We  integrate the identity  \eqref{2025Aug4eqn11}   on $\Sext_t$. Observe that
  \beaa
 \int_{\Sext_t} \Div[\phi]=\int_t^\infty   \int_{\SSS^2}2 \pr_r\Big( \lapp \phi    \big(1-     \frac{r^2}{t^2} \big) r^2 \pr_r  \phi- {r} |\nabb\phi|^2 \Big)= 2 t^{-1} \int_{r=t} | \nabb\phi|^2 
 \eeaa
 Therefore,  after neglecting the nonnegative term $\Big|\big(1-     \frac{r^2}{t^2} \big) r^{-2} \pr_r( r^2\pr_r\phi)\Big|^2 $, we have 
 \bea
 \lab{eq:Ext0}
 \int_{\Sext_t} \big| \nabb^2 \phi\big|^2+  2 \big(1-     \frac{r^2}{t^2} \big)\big|\nabb\pr_r\phi|^2  -\frac{2}{r^2}|\nabb\phi|^2+ 2 t^{-1} \int_{r=t} | \nabb\phi|^2  & \leq   &  \int_{\Sext_t} \big|L[\phi]\big|^2
 \eea
 Hence
 \bea
 \lab{eq:Ext1}
 \begin{split}
 \int_{\Sext_t} \big| \nabb^2 \phi\big|^2 -\frac{2}{r^2}|\nabb\phi|^2 + 2 t^{-1} \int_{r=t} | \nabb\phi|^2  & \leq  2t^{-2}  \int_{\Sext_t} \big(r^2-t^2\big)\big|\nabb\pr_r\phi|^2\\
 &+  \int_{\Sext_t} \big|L[\phi]\big|^2.
 \end{split}
 \eea
 Integrating by parts  we  derive in the integral $I=\int_{\Sext_t} (r^2-t^2) \big|\nabb\pr_r\phi|^2$ we deduce
 \beaa
 I&=& \int_{\Sext_t} (r^2-t^2) \big|\nabb\pr_r\phi|^2=  \int_{\Sext_t} (r^2-t^2) \nabb\pr_r\phi\nabb\pr_r \phi= -\int_{\Sext_t} (r^2-t^2)  \pr_r\phi\lapp\pr_r \phi\\
  &=& -\int_{\Sext_t} (r^2-t^2)  \pr_r\phi\big(\pr_r \lapp\phi+ 2r^{-1} \lapp\phi\big)
  \\
  &=&   \int_{\Sext_t}  r^{-2} \pr_r \big( r^2(r^2-t^2)\pr_r \phi \big) \lapp \phi- \int_{\Sext_t} (r^2-t^2)\pr_r \phi 2r^{-1} \lapp\phi\\
  &=&   \int_{\Sext_t}  (r^2-t^2)\pr^2_r \phi \lapp \phi+ \int_{\Sext_t} \big( 4 r - 2 r^{-1} t^2 \big)\pr_r \phi \lapp \phi
  - \big( 2 r - 2 r^{-1} t^2 \big) \pr_r \phi \lapp \phi
  \eeaa
  Hence
  \beaa
 I &=& \int_{\Sext_t}  (r^2-t^2)\pr^2_r \phi \lapp \phi+ \int_{\Sext_t} 2 r  \pr_r \phi \lapp \phi\\
  &=& \int_{\Sext_t}  (r^2-t^2)\pr^2_r \phi \lapp \phi-  \int_{\Sext_t} 2 r  \nabb \pr_r \phi \nabb \phi\\
  &=& \int_{\Sext_t}  (r^2-t^2)\pr^2_r \phi \lapp \phi -  \int_{\Sext_t} 2 r \big(  \pr_r  \nabb\phi + r^{-1} \nabb\phi \big)\nabb \phi\\
  &=&  \int_{\Sext_t}  (r^2-t^2)\pr^2_r \phi \lapp \phi-   \int_{\Sext_t}  r \pr_r\big(|\nabb\phi|^2 \big) 
  -2 \int_{\Sext_t}|\nabb\phi|^2 
 \eeaa
 Now
 \beaa
  \int_{\Sext_t}  r \pr_r\big(|\nabb\phi|^2 \big) &=&\int_t^\infty \int_{|\xi|=1 } r  \pr_r\big(|\nabb\phi|^2 \big) r^2 d\si(\xi)=-  3   \int_{\Sext_t}  |\nabb\phi|^2 - t  \int_{|x|=t} |\nabb\phi|^2 
 \eeaa
 Hence
 \be\label{2025Aug4eqn31}
 I =  \int_{\Sext_t}  (r^2-t^2)\pr^2_r \phi \lapp \phi+\int_{\Sext}|\nabb\phi|^2 +   t  \int_{|x|=t} |\nabb\phi|^2 
 \ee 
 Back to \eqref{eq:Ext1} we deduce
  \beaa
&& \int_{\Sext_t} \big| \nabb^2 \phi\big|^2  -\frac{2}{r^2} |\nabb\phi|^2+ \frac{2}{t} \int_{r=t} | \nabb\phi|^2  \leq t^{-2} I+  \int_{\Sext_t} \big|L[\phi]\big|^2\\
 &\leq &  t^{-2}  \int_{\Sext_t}  (r^2-t^2)\pr^2_r \phi \lapp \phi+ t^{-2}\int_{\Sext}|\nabb\phi|^2 + t^{-1} \int_{r=t} | \nabb\phi|^2  +  \int_{\Sext_t} \big|L[\phi]\big|^2
 \eeaa
 Hence
 \be\lab{2025Aug4eqn32}
 \begin{split}
  \int_{\Sext_t} \big| \nabb^2 \phi\big|^2+  \frac{1}{t} \int_{r=t} | \nabb\phi|^2 &\les   t^{-2}  \int_{\Sext_t}  (r^2-t^2)\pr^2_r \phi \lapp \phi+  \int_{\Sext_t} \big|L[\phi]\big|^2\\ 
  &\quad  {+ t^{-2} \int_{\Sext} |\nabb\phi|^2}.
  \end{split}
 \ee
 Using the inequality  $(r^2-t^2)\le 2 t(r-t)+  2 (r-t)^2 $  we write, for some $\la>0$,
 \beaa
  \Big| \int_{\Sext_t}  (r^2-t^2)\pr^2_r \phi \lapp \phi\Big|&\les &  \int_{\Sext_t}  2 t(r-t) \big| \pr^2_r \phi \lapp \phi\big|
  + \int_{\Sext_t} (r-t)^2 |\pr^2\phi|^2 \\
  &\les & \la^{-1}   \int_{\Sext_t}  t^2  \big| \lapp \phi\big|^2 +\la \int_{\Sext_t}(r-t)^2    \big| \pr_r^2 \phi \big|^2+
   \int_{\Sext_t} (r-t)^2 |\pr^2\phi|^2 
 \eeaa
 Therefore,
 \be\label{2025Aug7eqn1}
   t^{-2}  \int_{\Sext_t}  (r^2-t^2)\pr^2_r \phi \lapp \phi  \les \la^{-1}    \int_{\Sext_t}   \big| \lapp \phi\big|^2+\la  t^{-2} \int_{\Sext_t}(r-t)^2    \big| \pr^2  \phi \big|^2
 \ee 
 Thus, taking $\la$ large we deduce  from \eqref{eq:Boch-lapp}, \eqref{2025Aug4eqn32}, and the above estimate
\be\lab{eq:Ext3}
 \begin{split}
 \int_{\Sext_t} \big| \nabb^2 \phi\big|^2  +  \frac{1}{t} \int_{r=t} | \nabb\phi|^2&\les   t^{-2} \int_{\Sext_t}(r-t)^2    \big| \pr^2  \phi \big|^2+ \int_{\Sext_t} \big|L[\phi]\big|^2    +  t^{-2} \int_{\Sext} |\nabla\phi|^2.
 \end{split}
 \ee
 It remains to  estimate the term $  \int_{\Sext_t}(r - t )^2|\pr_r^2 \phi|^2 $. To do this we appeal to the following.
\begin{lemma}
\lab{Lemma:exterior-energy}
The following estimate holds true, with $u= t-r$ in the exterior region $\Sext$  where $r\ge t$.
\bea
\lab{eq:eq5'}
\int_{\Sext_t}  |u|^2  |\pr^2  \phi|^2 &\les & \int_{\Si_0}  r^2  |\pr^2\phi|^2   +\big(\int_0^t \|   u  \pr \square \phi\|_{L^2(\Sext_t)} \big)^2 
\eea
\end{lemma}
Using  the Lemma we then deduce  from \eqref{eq:Ext3} and \eqref{eq:wavepsi}
\be
\begin{split}
 &\int_{\Sext_t} \big| \nabb^2 \phi\big|^2  +  \frac{1}{t} \int_{r=t} | \nabb\phi|^2\\ 
  &\les  t^{-2} \int_{\Si_0}  r^2  |\pr^2\phi|^2 + t^{-2}  \big(\int_0^t \|   u  \pr \square \phi\|_{L^2(\Sext_t)} \big)^2+\int_{\Sext}\big|L[\phi]\big|^2   + t^{-2} \int_{\Sext} |\nabla\phi|^2\\ 
 &\les  t^{-2} \int_{\Si_0}  r^2  |\pr^2\phi|^2 + t^{-2}  \big(\int_0^t \|   u  \pr \square \phi\|_{L^2(\Sext_t)} \big)^2 +   \int_{\Sext_t} t^{-2} \big|\pr \Ga^{\le 1 } \phi \big|^2 + \int_{\Sext_t} |f|^2.
 \end{split}
\ee 
Finally, in view of  \eqref{2025Aug4eqn31}, \eqref{2025Aug7eqn1} and \eqref{eq:Ext3}, we further deduce that
\beaa
 \int_{\Sext_t}    \big(      \frac{r^2}{t^2}-1 \big)\big|\nabb\pr_r\phi|^2 &\les &t^{-2} \int_{\Si_0}  r^2  |\pr^2\phi|^2 + t^{-2}  \big(\int_0^t \|   u  \pr \square \phi\|_{L^2(\Sext_t)} \big)^2
 \\
 &&+  t^{-2}  \int_{\Sext_t}  \big|\pr \Ga^{\le 1 } \phi \big|^2 + \int_{\Sext_t} |f|^2.
 \eeaa
 as stated. This ends the proof of  Lemma \ref{Le:Bochner-ext}.
 \end{proof}
\begin{proof}[Proof of Lemma \ref{Lemma:exterior-energy}]
We start  with  the usual identity  $\pr^\b Q_{\a\b}=2\pr_\a \phi \square\phi$,    for the energy momentum  tensor  $Q_{\a\b}=\pr_\a\phi\pr_\b \phi -\frac 1 2 \m_{\a\b} \pr_\la\phi \pr^\la \phi $, which we write in the form
\beaa
\pr_t Q_{00}-\pr^ i Q_{0i}=-\pr_t \phi \square\phi.
\eeaa
We calculate, with $u=t-r$, 
\beaa
\pr_t\big(|u|^2 Q_{00} \big)-\pr^i \big(|u|^2 Q_{0i} \big)&=&u^2\big( \pr_t Q_{00}-\pr^ i Q_{0i})+ 2 u \pr_t u Q_{00}-2 u\pr_i u Q_{0i}\\
&=& 2 u\big( \pr_t u Q_{00} -\pr_i u Q_{0i} \big)- u^2 \pr_t \phi \square \phi\\
&=& 2 u \big(Q_{00} +\frac{x^i}{r} Q_{0i} \big) - u^2 \pr_t \phi \square \phi\\
&=&2u\Big(\frac 1 2\big(| \pr_t\phi|^2 +|\pr_r\phi|^2 +|\nabb\phi|^2\big) + \pr_t\phi\pr_r\phi\Big)- u^2 \pr_t \phi \square \phi\\\
&=& u \big( |(\p_t +\p_r )\phi|^2 +|\nabb\phi|^2 \big) - u^2 \pr_t \phi \square \phi\
\eeaa
Thus, by integration in the  causal  region where  $u<0$,
\beaa
\frac {d}{dt}\int_{\Si_t\cap \{u<0\}} |u|^2  Q_{00} &=&\int _{\Si_t\cap \{u<0\}}  u \big(|(\p_t +\p_r )\phi|^2 +|\nabb\phi|^2 \big) -\int_{\Si_t\cap \{u<0\}}  |u|^2\pr_t \phi \square\phi 
\eeaa
or,
\be
\begin{split}
&\int_{\Si_t\cap \{u<0\}} |u|^2  Q_{00} -\int_0^t \int _{\Si_t\cap \{u<0\}}  u \big(| (\p_t +\p_r )\phi|^2 +|\nabb\phi|^2 \big)\\ 
&=\int_{\Si_0\cap \{u<0\}} |u|^2  Q_{00} -\int_0^t \int _{\Si_0\cap \{u<0\}} u^2  \pr_t \phi      \square \phi. \\ 
\end{split}
\ee
In particular,
\beaa
 \int_{\Si_t\cap \{u<0\}} |u|^2  |\pr \phi|^2 \les  \int_{\Si_0\cap \{u<0\}} |u|^2 |\pr\phi|^2 + \int_0^t \int _{\Si_0\cap \{u<0\}} u^2  |\pr_t \phi  |\, |    \square \phi|
\eeaa
By Gronwall  we deduce,
\bea
\lab{eq:eq5}
\int_{\Si_t\cap \{u<0\}} |u|^2  |\pr_t\phi|^2 &\les & \int_{\Si_0\cap \{u<0\}}  |u|^2  |\pr \phi|^2   +\Big(\int_0^t \|   u   \square \phi\|_{L^2(\Si_t\cap\{u<0\})} \Big)^2 
\eea
In the same manner, by commuting  with $\pr$, we yield
\beaa
\int_{\Si_t\cap \{u<0\}} |u|^2  |\pr^2 \phi|^2 &\les & \int_{\Si_0\cap \{u<0\}}  u^2  |\pr^2\phi|^2   +\Big(\int_0^t \|   u  \pr \square \phi\|_{L^2(\Si_t)\cap\{u<0\})} \Big)^2.
\eeaa
Hence finishing the proof of \eqref{eq:eq5'}. 
\end{proof}

 As another consequence of the  Lemma \ref{Le:2nd-pointwise}, 
  we derive the following  degenerate \emph{weighted} B\"ochner identity
\begin{lemma}\label{WDBI}
Let $\omega (t,r)$   be a   nonnegative $C^2$ weight function.   Solutions to  the equation   $\square \phi=f $,  verify 
\be\label{2025July25eqn1}
\begin{split}
 &\int_{\Si_t} \omega(t,r)\big[\big| \nabb^2 \phi\big|^2+   \big(1-     \frac{r^2}{t^2} \big)\big|\nabb\pr_r\phi|^2 + \big(1-     \frac{r^2}{t^2} \big)^2\big| \pr^2_r\phi|^2  \big]\\ 
 &\lesssim  \int_{\Si_t} t^{-4} \omega(t,r) \big| \Ga^{\le 2 } \phi \big|^2+
    (t^{-2} +  r^{-2})\langle \frac{r}{t}\rangle^{2} \omega(t,r) \big|\p_r \Ga^{\le 1 } \phi \big|^2   \\ 
    &\quad +  \omega(t,r) |f|^2 +  \big(\widetilde{\omega}(t,r)\big)^2 \big( {\omega}(t,r)\big)^{-1} |  \phi|^2,\\
\end{split}
\ee
where $\widetilde{\omega}(t,r)$ is uniquely determined by the radial derivatives of ${\omega}(t,r)$, see \eqref{variantomega} for its precise definition. 
\end{lemma}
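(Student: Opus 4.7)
\emph{Proof proposal.} This is a weighted version of Lemma~\ref{Le:Bochner}, and the natural plan is to carry the weight $\omega(t,r)$ through the same argument: start from the pointwise identity \eqref{eq:2nd-pointwise} of Lemma~\ref{Le:2nd-pointwise}, rearrange it as
\[
|\nabb^2\phi|^2 + 2\bigl(1-\tfrac{r^2}{t^2}\bigr)|\nabb\pr_r\phi|^2 + \bigl|\bigl(1-\tfrac{r^2}{t^2}\bigr)r^{-2}\pr_r(r^2\pr_r\phi)\bigr|^2 = |L[\phi]|^2 + 2r^{-2}|\nabb\phi|^2 - \Div[\phi],
\]
multiply by $\omega$, and integrate over $\Si_t$. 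The new feature compared with \eqref{eq:Bochner2} is the positive third term $\omega(1-r^2/t^2)^2|\pr_r^2\phi|^2$ on the left, which I would extract by expanding $r^{-2}\pr_r(r^2\pr_r\phi) = \pr_r^2\phi + 2r^{-1}\pr_r\phi$ and using Cauchy--Schwarz: this isolates the desired $(1-r^2/t^2)^2|\pr_r^2\phi|^2$ and leaves a residual $(1-r^2/t^2)^2 r^{-2}|\pr_r\phi|^2$ which fits within the allowed $(t^{-2}+r^{-2})\langle r/t\rangle^2\omega|\pr_r\Ga^{\le 1}\phi|^2$ bound on the right.

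The two algebraic source terms on the right of the rearranged identity are handled as in the unweighted case. For $\omega|L[\phi]|^2$ I would use the decomposition $L[\phi] = F[\phi] + f$ of \eqref{eq:wavepsi} with $F[\phi] = t^{-2}(S^2\phi - 2r\pr_r S\phi - S\phi)$ from \eqref{eq:waveps-F1}, which yields $|L[\phi]|^2 \lesssim |f|^2 + t^{-4}|\Ga^{\le 2}\phi|^2 + (r/t)^2 t^{-2}|\pr_r\Ga^{\le 1}\phi|^2$ directly. The term $2\omega r^{-2}|\nabb\phi|^2$ is controlled using $|\nabb\phi|\le|\nab\phi|$ combined with $\pr_t\phi = t^{-1}(S\phi - r\pr_r\phi)$ to convert all first derivatives into radial derivatives of $\Ga^{\le 1}\phi$, so that it fits inside the same $(t^{-2}+r^{-2})\langle r/t\rangle^2\omega|\pr_r\Ga^{\le 1}\phi|^2$ bound.

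The main obstacle is the divergence integral $\int_{\Si_t}\omega\,\Div[\phi]$. Since $\omega$ depends only on $(t,r)$, the purely angular divergences in \eqref{2025Aug4eqn11} integrate to zero on each sphere $S(t,r)$, and only the radial piece $2r^{-2}\pr_r\bigl(\lapp\phi(1-r^2/t^2)r^2\pr_r\phi - r|\nabb\phi|^2\bigr)$ contributes. Integrating by parts in $r$ with $dx = r^2\,dr\,d\sigma$ transfers $\pr_r\omega$ onto the integrand; a subsequent angular integration by parts moves $\lapp$ off $\phi$, producing cross terms of the shape $(\pr_r\omega)(1-r^2/t^2)\nabb\pr_r\phi\cdot\nabb\phi$ that I would absorb into the principal $\omega(1-r^2/t^2)|\nabb\pr_r\phi|^2$ via Cauchy--Schwarz. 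A second integration by parts in $r$ on the remaining $\pr_r\phi$-factors moves the last radial derivative off $\phi$ and brings in $\pr_r^2\omega$; a final AM--GM then yields the stated $\widetilde\omega^2\omega^{-1}|\phi|^2$ term, with $\widetilde\omega$ the explicit combination of $\pr_r\omega$ and $\pr_r^2\omega$ defined in \eqref{variantomega}.

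The delicate part will be the bookkeeping of these successive integrations by parts: each Cauchy--Schwarz has to be split so that the ``bad'' half is absorbable into one of the three principal positive quantities on the left of \eqref{2025July25eqn1} (or into the allowed right-hand side), and it is precisely the structural bound on the weight $\omega$, analogous to the one imposed on $\chi$ in \eqref{2025July28eqn2}, that makes this absorption and the definition of $\widetilde\omega$ consistent.
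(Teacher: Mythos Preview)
Your overall architecture is right and matches the paper: weight the pointwise identity \eqref{eq:2nd-pointwise}, expand the radial second-order term to isolate $(1-r^2/t^2)^2|\pr_r^2\phi|^2$, estimate $\omega|L[\phi]|^2$ via \eqref{eq:waveps-F1}, and handle $\int\omega\,\Div[\phi]$ by successive integrations by parts. But two of your intermediate steps do not work as written, and they both concern the same missing idea.

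First, your treatment of the term $2\omega r^{-2}|\nabb\phi|^2$ is incorrect. The angular gradient $\nabb\phi$ is purely spatial and tangential to the spheres; the identity $\pr_t\phi=t^{-1}(S\phi-r\pr_r\phi)$ is irrelevant here, and there is no way to rewrite $|\nabb\phi|$ as $|\pr_r\Ga^{\le 1}\phi|$. So this term does \emph{not} fit into the $(t^{-2}+r^{-2})\langle r/t\rangle^2\omega|\pr_r\Ga^{\le 1}\phi|^2$ bucket.

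Second, and more importantly, your endgame for the divergence term is muddled. After your first radial and angular integrations by parts you are left (as in the paper) with an expression of the form $\int_{\Si_t}\widetilde\omega(t,r)\,|\nabb\phi|^2$, where $\widetilde\omega$ is the combination in \eqref{variantomega}. Your sentence ``a second integration by parts in $r$ on the remaining $\pr_r\phi$-factors \ldots\ a final AM--GM then yields $\widetilde\omega^2\omega^{-1}|\phi|^2$'' does not bridge this gap: after your Cauchy--Schwarz absorption there are no $\pr_r\phi$-factors left, and AM--GM alone cannot pass from $|\nabb\phi|^2$ to $|\phi|^2$. The step you are missing is a further \emph{angular} integration by parts,
\[
\int_{\Si_t}\widetilde\omega\,|\nabb\phi|^2 \;=\; -\int_{\Si_t}\widetilde\omega\,\phi\,\lapp\phi
\;\le\; c\int_{\Si_t}\omega\,|\nabb^2\phi|^2 \;+\; c^{-1}\int_{\Si_t}\widetilde\omega^2\omega^{-1}|\phi|^2,
\]
with $c$ small so the first piece is absorbed into the left-hand side; this is exactly \eqref{2025July25eqn19} in the paper, and the remark following the proof flags it as the crucial step. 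The same trick disposes of the $2\omega r^{-2}|\nabb\phi|^2$ term (group it with $\widetilde\omega|\nabb\phi|^2$). Without this angular IBP you cannot produce the $|\phi|^2$ term in \eqref{2025July25eqn1}, and the argument does not close.
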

\begin{remark}
 
In \eqref{2025July25eqn1}, we isolate the most troublesome term, $\partial_r \Gamma^{\leq 1} \phi$, noting that the final term in \eqref{2025July25eqn1} is a good term, see  \eqref{2025July31eqn22}.  This term arises specifically from the $\partial_r S\phi$ component of    $F[\phi]$ (see \eqref{eq:waveps-F1}) and  the low order term $ \pr_r\phi$ in $L(\phi)$ (see \eqref{eq:2nd-pointwise}).  {Reapplying a degenerate weighted B\"ochner identity to $\partial_r \Gamma^{\leq 1} \phi$, using once more the commutation properties of $\square$ with $\Ga$, allows us to further improve  its decay properties near the light cone\footnote{This is done first by  integrating  by parts with respect to $r$ to bound $\partial_r \Gamma^{\leq 1} \phi$ by $\partial_r^2 \Gamma^{\leq 1} \phi$ and $\Gamma^{\leq 1} \phi$, and then use the estimate \eqref{2025July25eqn1} to control $\partial_r^2 \Gamma^{\leq 1} \phi$ (see  the   precise estimates in  \eqref{2025June16eqn32} and \eqref{2025July28eqn52})}.
}
 {The estimate \eqref{2025July25eqn1} reveals that the weight function is unaffected by the inhomogeneous term $f$. Critically, the weighted Bochner identity is not applied to $f$, nor can it be. The treatment of $f$ occurs later, during the weighted Sobolev embedding, as explained in Remark \ref{remark1}.}
\end{remark}
\begin{proof}
We integrate  \eqref{eq:2nd-pointwise} on $\Si_t$ and derive the identity
\be\label{2025July25eqn2}
\begin{split}
 &\int_{\Si_t} \omega(t,r)\big[\big| \nabb^2 \phi\big|^2+   \big(1-     \frac{r^2}{t^2} \big)\big|\nabb\pr_r\phi|^2 + \big(1-     \frac{r^2}{t^2} \big)^2\big| \pr^2_r\phi|^2  \big]\\ 
 &\lesssim \int_{\Si_t } r^{-2}\langle \frac{r}{t} \rangle^2 \omega(t,r)|\p_r\phi|^2 + \int_{\Si_t }  \omega(t,r)  \big|L[\phi]\big|^2 + \big|  \int_{\Si_t } \omega(t,r) Div[\phi]  \big|. 
 \end{split}
 \ee
   To estimate  the right hand side we  recall that $ L[\phi]=F[\phi]+ f$  with $F[\phi]= \frac{1}{t^2} \big( S^2\phi -2r\p_r S\phi- S \phi\big)$, see \eqref{eq:waveps-F1}. Thus
\be\label{2025July25eqn3}
\int_{\Si_t }  \omega(t,r)  \big|L[\phi]\big|^2\lesssim  t^{-4} \int_{\Si_t} \omega(t,r) \big| \Ga^{\le 2 } \phi \big|^2+
  \int_{\Si_t}   t^{-2}\langle \frac{r}{t} \rangle^2 \omega(t,r) \big|\p_r \Ga^{\le 1 } \phi \big|^2 + \int_{\Si_t}\omega(t,r) |f|^2.
   \ee
  Recall \eqref{2025Aug4eqn11}. For the divergence part in \eqref{2025July25eqn2}, we do integration by parts.  As a result, we have
\be\label{2025July25eqn7}
\begin{split}
 \int_{\Si_t } \omega(t,r) Div[\phi] &=  -2\int_{\Si_t } \p_r \omega(t,r) \Big( \lapp \phi    \big(1-     \frac{r^2}{t^2} \big) \pr_r  \phi- r^{-1}|\nabb\phi|^2 \Big) \\ 
 \end{split}
 \ee
 For the first term on the right hand side, we first do integration by parts for the angular derivative and then use the  commutation rule $[\slashed \nabla, \p_r]= r^{-1}\slashed \nabla$. As a result, we have 
 \be\label{2025July25eqn11}
\begin{split}
-\int_{\Si_t } \p_r \omega(t,r)  \lapp \phi    \big(1-     \frac{r^2}{t^2} \big)  \pr_r  \phi& =  \int_{\Si_t } \p_r \omega(t,r)      \big(1-     \frac{r^2}{t^2} \big) \slashed\nabla \phi\cdot \slashed\nabla \pr_r  \phi\\
& =  \int_{\Si_t } \p_r \omega(t,r)      \big(1-     \frac{r^2}{t^2} \big)\big[ \frac{1}{2}\p_r\big(|\slashed\nabla \phi|^2\big) + r^{-1}|\slashed \nabla \phi|^2 \big].\\ 
 \end{split}
 \ee
 For the first term on the right hand side of \eqref{2025July25eqn11}, we do integration by parts for the radial derivative. As a result, we have
  \be\label{2025July25eqn13}
\begin{split}
& \int_{\Si_t } \p_r \omega(t,r)      \big(1-     \frac{r^2}{t^2} \big)  \p_r\big(|\slashed\nabla \phi|^2\big) = - \int_{\Si_t } r^{-2}\p_r \big(r^2\p_r \omega(t,r)      \big(1-     \frac{r^2}{t^2} \big)\big)   |\slashed\nabla \phi|^2. \\  
 \end{split}
 \ee
 To sum up, after combining the obtained estimates and equalities  in  \eqref{2025July25eqn2}--\eqref{2025July25eqn13}, we have 
 \be\label{2025July25eqn16}
\begin{split}
 & \int_{\Si_t} \omega(t,r)\big[\big| \nabb^2 \phi\big|^2+   \big(1-     \frac{r^2}{t^2} \big)\big|\nabb\pr_r\phi|^2 + \big(1-     \frac{r^2}{t^2} \big)^2\big| \pr^2_r\phi|^2  \big] \\ 
 &\lesssim  \int_{\Si_t}   t^{-4}\omega(t,r) \big| \Ga^{\le 2 } \phi \big|^2  +
   (t^{-2} +  r^{-2})\langle \frac{r}{t}\rangle^{2} \omega(t,r) \big|\p_r \Ga^{\le 1 } \phi \big|^2  \\ 
   &\quad  +  \omega(t,r) |f|^2  + \big| \int_{\Si_t} \widetilde{\omega}(t,r) |\slashed \nabla \phi|^2 \big| ,
 \end{split}
 \ee
 where  $\widetilde{\omega}(t,r)$ is defined as follows, 
 \be\label{variantomega}
   \widetilde{\omega}(t,r):=\frac{2}{r} \p_r \omega(t,r) -\frac{1}{2r^2} \p_r \big(r^2\p_r \omega(t,r)      \big(1-     \frac{r^2}{t^2} \big)\big). 
 \ee
 Moreover, for the last term in \eqref{2025July25eqn16}, we exploit the benefit of the first order angular derivative  by   doing integration by parts for the angular derivative first and then using the Cauchy-Schwarz inequality. As a result, we have
\be\label{2025July25eqn19}
\begin{split}
  \big| \int_{\Si_t} \widetilde{\omega}(t,r) |\slashed \nabla \phi|^2 \big| & = \big| \int_{\Si_t} \widetilde{\omega}(t,r)  \phi \slashed \Delta \phi  \big| \\ 
&\leq \big| \int_{\Si_t} \big(\widetilde{\omega}(t,r)\big)^2 \big( {\omega}(t,r)\big)^{-1}  |\phi|^2  \big|^{1/2} \big| \int_{\Si_t}  {\omega}(t,r)  |\slashed \Delta \phi|^2  \big|^{1/2}\\ 
&\leq c \int_{\Si_t}  {\omega}(t,r)  |\slashed \nabla^2 \phi|^2  + c^{-1}\int_{\Si_t} \big(\widetilde{\omega}(t,r)\big)^2 \big( {\omega}(t,r)\big)^{-1}  |\phi|^2,
 \end{split}
 \ee
where $c$ is some sufficiently small absolute constant.  

Combining the obtained estimate \eqref{2025July25eqn16} and \eqref{2025July25eqn19},  our desired estimate \eqref{2025July25eqn1} holds. It's worth to point out that, due to the estimate \eqref{2025July28eqn2}, there is no singularity for the weight function $\big(\widetilde{\omega}(t,r)\big)^2 \big( {\omega}(t,r)\big)^{-1} $ in our application. 
\end{proof}
\begin{remark}

It is important to note that the integration by parts in \eqref{2025July25eqn19} is crucial. In application, the weight function we use for $\omega(t,r)$ has the form $r^{\alpha} \chi_{\leq -10}((t-r)/(t-r_0))$, where $r_0$ is a fixed positive constant s.t., $r_0\gtrsim t$.  
 For simplicity in explanation, we assume $\alpha=0$. As a result of direct computation, we have
\[
(\textbf{Example case only}):  \big| \int_{\Si_t} \widetilde{\omega}(t,r) \big|\slashed \nabla \phi|^2 \big|\lesssim |t|^{-1}|t-r_0|^{-1}  \int_{\Si_t}  \big|\slashed \nabla \phi|^2. 
\]
 Moreover,  
\be\label{2025July31eqn22}
(\textbf{Example case only}):  \big| \int_{\Si_t} \big(\widetilde{\omega}(t,r)\big)^2 \big( {\omega}(t,r)\big)^{-1}  |\phi|^2  \big|\lesssim |t|^{-2}|t-r_0|^{-2}  \int_{\Si_t}  \big|  \phi|^2. 
\ee
The benefit of using integration by parts  is now evident in the above improved estimate.
\end{remark}

  \subsection{ Weighted Sobolev inequalities.}
 To prove Theorem \ref{Thm:main1}   we need  a weighted  version of the Sobolev Lemma  that  makes maximal use of the $L^2$ estimate \eqref{2025July25eqn1}.

 \begin{lemma}\label{Sobolevembedding}
Let $0<\delta\ll  1$. For any $x_0\in \R^3$, s.t., $|x_0|\geq 2^{-5} |t|$, we have
\be\label{2025June9eqn1}
\begin{split}
|\phi(t,x_0)|&\lesssim \delta^{-1}|x_0|^{-1}\big( \min\{|t|,\langle t-|x_0|\rangle\}\big)^{-1/2+2\delta}\|\p_r^{\leq 2} \phi(t,x)\|_{L^2(\R^3)}^{\delta} \\ 
&\quad\times  \big(\|\phi(t,x)\|_{L^2(\R^3)}  +  \|   \omega(t,|x|)    ||x|^2 \slashed\nabla^2 \phi(t,x)  |    \|_{L^2(\R^3)}\big)^{(1+\delta)/2} \\ 
    &\quad \times \big(\|\phi(t,x)\|_{L^2(\R^3)} +   \|   \omega(t,|x|)    \langle t-|x|\rangle  \p_r \phi(t,x)   \|_{L^2(\R^3)}\big)^{(1-3\delta)/2},
   \end{split}
  \ee
  where
  \be\label{weightfunctionsobo}
    \omega(t,|x|):=\chi_{\leq -10}\big(\frac{|x|-|x_0|}{\min\{\langle t-|x_0| \rangle, |t|\} } \big).
  \ee
 \end{lemma}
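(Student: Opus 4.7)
My plan is to prove the bound by localizing $\phi$ to the radial shell $\{|r-|x_0||\lesssim \sigma\}$ of width $\sigma:=\min\{|t|,\langle t-|x_0|\rangle\}$ (captured by $\omega$), combining a one-dimensional Sobolev inequality along the ray in the direction $u_0:=x_0/|x_0|$ with a two-dimensional borderline Sobolev embedding on $\mathbb{S}^2$, and converting the resulting $L^2(\mathbb{S}^2\times dr)$ norms into $L^2(\R^3)$ norms via the spherical Jacobian $r^2$. The hypothesis $|x_0|\geq 2^{-5}|t|$ together with the fact that $\mathrm{supp}(\omega)$ has width $\lesssim \sigma/2^{10}\leq |x_0|/10$ guarantees two geometric scalings on $\mathrm{supp}(\omega)$: first $r\sim |x_0|$ (producing the prefactor $|x_0|^{-1}$), and second $\langle t-r\rangle\gtrsim \sigma$ (so that the weight $\langle t-r\rangle$ in the final estimate can absorb a $\sigma^{-1}$ loss).

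\textbf{Step 1 (radial Sobolev) and Step 2 (angular Sobolev).} Since $\omega(t,|x_0|)=\chi_{\leq-10}(0)=1$ and $\omega$ is compactly supported in $r$, the fundamental theorem of calculus applied to $|(\omega\phi)(\cdot,u_0)|^2$, together with the pointwise bound $|\partial_r\omega|\lesssim \omega^{1/2}/\sigma$ that follows from rescaling \eqref{2025July28eqn2}, yields
\[
|\phi(t,x_0)|^2 \;\lesssim\; \|\omega\phi(\cdot,u_0)\|_{L^2_r}\,\|\omega\partial_r\phi(\cdot,u_0)\|_{L^2_r} \;+\; \sigma^{-1}\|\omega\phi(\cdot,u_0)\|_{L^2_r}^2.
\]
I then bound each $u_0$-pointwise integrand by its $L^\infty(\mathbb{S}^2)$ norm at that $r$ and invoke the sharpened two-dimensional embedding
\[
\|f\|_{L^\infty(\mathbb{S}^2)}^2 \;\lesssim\; \delta^{-1}\,\|f\|_{L^2(\mathbb{S}^2)}^{1-\delta}\,\|f\|_{H^2(\mathbb{S}^2)}^{1+\delta},
\]
a consequence of $H^{1+\delta}(\mathbb{S}^2)\hookrightarrow L^\infty$ (the $\delta^{-1}$ loss coming from the Littlewood--Paley sum $\sum_k 2^{k}\cdot 2^{-\delta k}$) and the interpolation $H^{1+\delta}=[L^2,H^2]_{(1+\delta)/2}$; on a sphere of radius $r$ the $H^2$ norm is comparable to $\|f\|_{L^2(\mathbb{S}^2)}+\|r^2\slashed\nabla^2 f\|_{L^2(\mathbb{S}^2)}$, which produces the weight $r^2$ on $\slashed\nabla^2\phi$. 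Applying the same embedding to $\partial_r\phi$ would naively call for $\slashed\nabla^2\partial_r\phi$; instead, an angular integration by parts combined with an $r$-interpolation transfers the extra $\delta$ of Sobolev regularity onto two radial derivatives of $\phi$, yielding the unweighted auxiliary factor $\|\partial_r^{\leq 2}\phi\|_{L^2(\R^3)}^\delta$.

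\textbf{Step 3 (norm conversion and exponent tracking).} Integrating the Step 2 bounds against $\omega^2\,dr$ and applying H\"older with conjugate exponents $2/(1-\delta)$ and $2/(1+\delta)$ turns each inner $L^\infty_{\mathbb{S}^2}$ integral into a product of $L^2(\mathbb{S}^2)$ and $H^2(\mathbb{S}^2)$ norms in $r$. The identity
\[
\int \omega^2\,\|g(r,\cdot)\|_{L^2(\mathbb{S}^2)}^2\,dr \;\sim\; |x_0|^{-2}\,\|\omega g\|_{L^2(\R^3)}^2,
\]
valid since $r\sim|x_0|$ on $\mathrm{supp}(\omega)$, transports every norm to $L^2(\R^3)$ and, after taking square roots, delivers the prefactor $|x_0|^{-1}$. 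The radial weight $\langle t-r\rangle$ is introduced by writing $\omega\partial_r\phi=\omega\langle t-r\rangle^{-1}\cdot\langle t-r\rangle\partial_r\phi$ and using $\langle t-r\rangle\gtrsim\sigma$ on $\mathrm{supp}(\omega)$, contributing the $\sigma^{-1}$ share of $\sigma^{-1/2+2\delta}$; the remaining $\sigma^{-1/2+O(\delta)}$ records the 1D Sobolev and H\"older scales. The exponents $\delta+(1+\delta)/2+(1-3\delta)/2=1$ sum correctly.

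\textbf{Main obstacle.} The delicate bookkeeping is to arrange the $\delta^{-1}$ Littlewood--Paley loss so that it appears only once in the final bound, and to keep the $\|\partial_r^{\leq 2}\phi\|^\delta$ factor \emph{unweighted} so that it behaves as a harmless high-regularity bulk term in the application to Theorem \ref{Thm:main1}. The asymmetric exponents $(1+\delta)/2$ and $(1-3\delta)/2$ reflect the different weights on $\slashed\nabla^2\phi$ (dimensionless in combination with $r^2$) versus $\partial_r\phi$ (which gains a $\langle t-r\rangle$ factor of size $\sigma$ from the shell width), and matching them requires a careful use of $|\partial_r\omega|^2/\omega\lesssim \sigma^{-2}$ from \eqref{2025July28eqn2} so that the boundary contributions from the 1D Sobolev step introduce no additional $\delta^{-1}$ loss.
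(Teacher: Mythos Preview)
Your high-level plan (localize to a radial shell via $\omega$, Sobolev-embed, convert $L^2(\mathbb{S}^2\times dr)$ to $L^2(\R^3)$ using $r\sim|x_0|$) matches the paper's Step~2, and your geometric observations ($r\sim|x_0|$, $\langle t-r\rangle\gtrsim\sigma$ on $\mathrm{supp}\,\omega$, $|\p_r\omega|\lesssim\omega^{1/2}/\sigma$) are correct. The gap is in your Step~2. A sequential ``1D radial then 2D angular'' physical-space Sobolev argument inevitably forces an $L^\infty(\mathbb{S}^2)$ bound on $\p_r\phi$, and any such bound requires at least $H^{1+}(\mathbb{S}^2)$ regularity of $\p_r\phi$, i.e.\ a mixed derivative $\slashed\nabla\p_r\phi$ or $\slashed\nabla^2\p_r\phi$. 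The statement, however, involves \emph{only} pure angular ($\slashed\nabla^2\phi$) and pure radial ($\p_r\phi$, $\p_r^2\phi$) norms. Your proposed fix---``an angular integration by parts combined with an $r$-interpolation transfers the extra $\delta$ of Sobolev regularity onto two radial derivatives''---is not an argument: there is no interpolation that trades angular regularity of $\p_r\phi$ for purely radial regularity of $\phi$, since these live in orthogonal frequency directions.

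The paper resolves this by proving first an abstract embedding on $\R\times\mathbb{S}^2$ via a \emph{joint} Littlewood--Paley/spectral decomposition $\mathcal{P}_{k,n}:=P_k^{rad}P_n^{sph}$: from $\|f\|_{L^\infty}\lesssim\sum_{k,n}2^{k/2+n}\|\mathcal{P}_{k,n}f\|_{L^2}$ one applies two nested H\"older inequalities to the dyadic sum (with exponents $(2/(1+\delta),2/(1-\delta))$ and then $((1-\delta)/(1-3\delta),(1-\delta)/(2\delta))$) to arrive at
\[
\Big(\sum 2^{4n}a_{k,n}^2\Big)^{(1+\delta)/4}\Big(\sum 2^{2k}a_{k,n}^2\Big)^{(1-3\delta)/4}\Big(\sum 2^{4k}a_{k,n}^2\Big)^{\delta/2},
\]
which by Plancherel gives exactly $(\|f\|+\|\slashed\nabla^2 f\|)^{(1+\delta)/2}\|\p_r^{\le1}f\|^{(1-3\delta)/2}\|\p_r^{\le2}f\|^{\delta}$ with no mixed term. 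The decoupling of angular ($n$) and radial ($k$) frequencies in this arithmetic step is the point, and it is what your sequential physical-space argument cannot reproduce. The $\delta^{-1}$ loss and the asymmetric exponents $(1+\delta)/2,\,(1-3\delta)/2,\,\delta$ all emerge from this single dyadic manipulation rather than from bookkeeping.
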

 \begin{remark}\label{remark1}
In the weighted Sobolev embedding \eqref{2025June9eqn1}, we carefully separate the angular and radial derivatives. This arrangement is crucial for applying the degenerate Bochner identity to both the second angular derivatives and the first radial derivative\footnote{By integrating by parts in $r$ once, we control the $L^2$-norm of the  first radial derivative part by the second radial derivatives and the zero derivative. We then apply the degenerate Bochner identity again for the second radial derivatives part.}. This allows us to obtain the crucial $\langle t\rangle^{1/2+}$ factor associated with the inhomogeneous term in the estimate \eqref{finalestphysicspace}.  
\end{remark}
 \begin{remark}
We put the restriction  $|x_0|\geq 2^{-5} |t|$ because the weighted Sobolev embedding will only be used for this case and the absolute constant $2^{-5}$ doesn't play a role as long as it's strictly less than $1$. For the case  $|x_0|\leq 2^{-5} |t|$, we use the uniform ellipticity of the operator $\Delta-r^2 t^{-2}\p_r^2.$
 \end{remark}

\begin{proof}

We proceed in two steps. Firstly, we prove a general  $L^\infty\rightarrow L^2$ type Sobolev embedding on the product space $\R\times \mathbb{S}^2$, which allows us to go from $L^\infty(\R\times \mathbb{S}^2)$-type space to $L^2(\R\times \mathbb{S}^2)$-type space. Lastly, we use the spherical coordinates  and  incorporate the weight function  to   go from $L^2(\R\times \mathbb{S}^2)$-type space to $L^2(\R^3)$-type space, which give us the desired weighted Sobolev embedding.

\textbf{Step 1.}\qquad A $L^\infty\rightarrow L^2$ type Sobolev embedding on the product space $\R\times \mathbb{S}^2. $

This step is devoted to prove the following estimate
\be\label{2025July24eqn6}
\begin{split}
 \|  f(x,w)\|_{L^\infty(\R\times  \mathbb{S}^2)}    &\lesssim  \delta^{-1}  \big( \|   f(x,w)\|_{L^2(\R\times  \mathbb{S}^2)} + \| \slashed\nabla^2  f(x,w)\|_{L^2(\R\times  \mathbb{S}^2)}  \big)^{(1+\delta)/2} \\ 
  &\quad \times    \| \p_{x}^{\leq 1}f(x,w)\|_{L^2(\R\times  \mathbb{S}^2)}^{(1-3\delta)/2} \| \p_{x}^{\leq 2}f(x,w)\|_{L^2(\R\times  \mathbb{S}^2)}^{ \delta}.
\end{split}
\ee

Before proving the above estimate, we introduce some necessary notation, which will be only used in this Lemma.

 Let $\{\lambda_j^2\}_{j=0}^{+\infty}$ be the set of eigenvalues of $-\Delta_{\mathbb{S}^2}$ and $\pi_j$ be the projection operator associated with the eigenvalue $\lambda_j^2$, i.e., $-\Delta_{\mathbb{S}^2} \pi_j h = \lambda_j^2 \pi_j h $, where $h: \mathbb{S}^2\rightarrow \R$. 
  We define the inhomogeneous  dyadic decomposition operator $ {P}^{sph}_n$, $n\in {\mathbb{Z}_+}$, with respect to the spectrum of $-\Delta_{\mathbb{S}^2}$ as follows, 
\[
  {P}^{sph}_0 f := \pi_0 f, \quad \forall n\geq 1,\quad    {P}^{sph}_n h :=\sum_{\lambda_j^2 \in [2^{n-1}, 2^{n})} \pi_j h.
\]
Moreover, we use ${P}^{rad}_k$ to denote the  inhomogeneous dyadic decomposition operator in $\R$ as follows, 
\[
 g:\R\rightarrow \R, \quad   {P}^{rad}_k g(x):=\left\{\begin{array}{ll}
   \displaystyle{ \int_{\R} e^{ix \cdot\xi} \chi_k(\xi) \widehat{g}(\xi) d \xi}  &  k\geq 1,k\in \Z_+\\ 
          \displaystyle{ \int_{\R} e^{ix \cdot\xi} \chi_{\leq 0}(\xi) \widehat{g}(\xi) d \xi}  & k=0. \\ 
\end{array}
   \right.
\]

For any $f(x,\omega)\in H^2(\R\times \mathbb{S}^2)$, we use $\mathcal{P}_{k,N}$ to denote the localization operator that localizes the frequency of $x\in \R$ by using the operator $   {P}^{rad}_k $ and localizes the eigenvalues of $-\Delta_{\mathbb{S}^2}$ associated with $\omega\in \mathbb{S}^2$ by using the operator ${P}_n^{sph}$, i.e., 
\[
   \mathcal{P}_{k,n}(f)(x,w):=P^{rad}_k  {P}_n^{sph} f. 
\]
 Note that,  from the classic  $L^\infty\rightarrow L^2$ type Sobolev embedding on $\R\times \mathbb{S}^2$, we have 
 \[
  \|  \mathcal{P}_{k,n}(f)(x,w) \|_{L^\infty(\R\times \mathbb{S}^2)}\lesssim 2^{k/2+n}   \| \mathcal{P}_{k,n}(f) \|_{L^2(\R\times \mathbb{S}^2)}.
 \] 
 From the above estimate, after using the partition of unity, we have
\be\label{2025June12eqn1}
\begin{split}
 \| f(x,w) \|_{L^\infty(\R\times  \mathbb{S}^2)}  & \lesssim \sum_{k, n\in \Z_+}     \|\mathcal{P}_{k,n}(f)  \|_{ L^\infty (\R\times  \mathbb{S}^2) }   \lesssim   \sum_{k, n\in \Z_+}    2^{k/2+n}     \| \mathcal{P}_{k,n}(f) \|_{L^2(\R\times  \mathbb{S}^2)}. \\ 
  \end{split}
\ee 
Let $M\in \Z_+$ be arbitrary. Note that, after expanding the product once and using the fact that $ \forall a,b\in\R, |ab|\leq a^2+b^2,$  we have 
\be\label{2025July24eqn1}
\begin{split}
&\big(\sum_{n_1, n_2=0}^M  2^{n_1} 2^{n_2/2} a_{n_1, n_2}\big)^2 =   \sum_{n_1, n_2=0}^M\sum_{m_1, m_2=0}^M   2^{n_1} 2^{n_2/2} a_{n_1, n_2}   2^{m_1} 2^{m_2/2} a_{m_1,m_2}  \\
&= \sum_{n_1, n_2=0}^M \sum_{m_1, m_2=0}^M  2^{ (1+\delta)(n_1+n_2/2) } a_{n_1,n_2}  2^{- \delta (m_1+m_2/2) } 2^{ (1+\delta)(m_1+m_2/2) } a_{m_1,m_2}  2^{- \delta (n_1+n_2/2) }  \\  
 & \lesssim     \sum_{n_1, n_2=0}^M \sum_{m_1, m_2=0}^M  2^{ (1+\delta)(2n_1+n_2) } a_{n_1,n_2}^2 2^{- \delta (2m_1+m_2) } +  2^{ (1+\delta)(2m_1+m_2) } a_{m_1,m_2}^2 2^{- \delta (2n_1+n_2) }   \\
\end{split}
\ee 
Since the geometric series is summable, after  using the Taylor expansion  of $2^{-\delta}$ and the   $ (2/(1+\delta), 2/(1-\delta))$ type H\"older inequality , we have  
\be\label{2025July24eqn2}
\begin{split}
\eqref{2025July24eqn1}&   \lesssim   \big(\frac{1}{1-2^{- \delta}}\big)^2  \big( \sum_{n_1, n_2=0}^M  2^{ (1+\delta)(2n_1+n_2)} a_{n_1,n_2}^2\big) \\ 
&\lesssim \delta^{-2}  \big( \sum_{n_1, n_2=0}^M  ( 2^{2n_1} a_{n_1,n_2} )^{(1+\delta)}  ( 2^{(1+\delta)n_2/(1-\delta)} a_{n_1,n_2} )^{(1-\delta)}     \big) \\ 
& \lesssim  \delta^{-2} \big(\sum_{n_1, n_2=0}^M 2^{4n_1 } a_{n_1, n_2}^2 \big)^{(1+\delta)/2}\big( \sum_{n_1, n_2=0}^M 2^{2n_2(1+\delta)/(1-\delta)} a_{n_1, n_2}^2 \big)^{(1-\delta)/2}.\\  
\end{split}
\ee
For the second term in the last line of \eqref{2025July24eqn2}, we use the  $((1-\delta)/(1-3\delta),(1-\delta)/(2\delta))$ type H\"older inequality. As a result, we have  
\be\label{2025July24eqn4}
\begin{split}
 &\sum_{n_1, n_2=0}^M 2^{2n_2(1+\delta)/(1-\delta)} a_{n_1, n_2}^2 \\ 
 &=  \sum_{n_1, n_2=0}^M \big(2^{4n_2} a^2_{n_1, n_2} \big)^{2\delta/(1-\delta)}  \big(2^{2n_2} a_{n_1, n_2}^2 \big)^{(1-3\delta)/(1-\delta)} \\ 
 &\lesssim  \big( \sum_{n_1, n_2=0}^M 2^{4n_2} a^2_{n_1, n_2} \big)^{2\delta/(1-\delta)} \big( \sum_{n_1, n_2=0}^M  2^{2n_2} a_{n_1, n_2}^2 \big)^{(1-3\delta)/(1-\delta)}.
\end{split}
\ee
To sum up, after combining the obtained estimates \eqref{2025July24eqn2} and \eqref{2025July24eqn4}, we have 
\be
\begin{split}
 \sum_{n_1, n_2=0}^M  2^{n_1} 2^{n_2/2} a_{n_1, n_2}  & \lesssim  \delta^{-1} \big(\sum_{n_1, n_2=0}^M 2^{4n_1 } a_{n_1, n_2}^2 \big)^{(1+\delta)/4}\big( \sum_{n_1, n_2=0}^M 2^{2n_2} a_{n_1, n_2}^2 \big)^{(1-3\delta)/4}\\ 
&\quad \times \big( \sum_{n_1, n_2=0}^M 2^{ 4n_2} a_{n_1, n_2}^2 \big)^{\delta/2}.\\  
\end{split}
\ee

  From the above estimate, by letting $M\rightarrow +\infty$, we conclude 
\be\label{2025June15eqn31}
\begin{split}
   \sum_{n_1, n_2\in \Z_+}   2^{n_1} 2^{n_2/2} a_{n_1, n_2}   &\lesssim  \delta^{-1}  \big( \sum_{n_1, n_2\in \Z_+}  2^{4n_1  } a_{n_1, n_2}^2 \big)^{(1+\delta)/4}\big( \sum_{n_1, n_2\in \Z_+} 2^{2n_2}  a_{n_1, n_2}^2 \big)^{(1-3\delta)/4}\\ 
     &\quad \times \big( \sum_{n_1, n_2\in \Z_+} 2^{4n_2}  a_{n_1, n_2}^2 \big)^{\delta/2}. 
\end{split}
\ee
Therefore, from the above estimate and the obtained estimate \eqref{2025June12eqn1},  we have
\be\label{2025May27eqn14}
\begin{split}
   &\| f(x,w) \|_{L^\infty(\R\times  \mathbb{S}^2)} \lesssim  \delta^{-1}      (   \sum_{k, n\in \Z_+}    2^{4n }\| \mathcal{P}_{k,n}(f)   \|_{L^2(\R\times  \mathbb{S}^2)}^2)^{(1+\delta)/4} \\ 
  &\quad \times (   \sum_{k, n\in \Z_+}    2^{2k}\|   \mathcal{P}_{k,n}(f)  \|_{L^2(\R\times  \mathbb{S}^2)}^2 )^{(1-3\delta)/4} (   \sum_{k, n\in \Z_+}    2^{4k}\|  \mathcal{P}_{k,n}(f)  \|_{L^2(\R\times  \mathbb{S}^2)}^2 )^{ \delta/2}\\  
  &\lesssim  \delta^{-1}  \big( \|   f(x,w)\|_{L^2(\R\times  \mathbb{S}^2)} + \| \slashed\nabla^2  f(x,w)\|_{L^2(\R\times  \mathbb{S}^2)}  \big)^{(1+\delta)/2} \\ 
  &\quad \times    \| \p_{x}^{\leq 1}f(x,w)\|_{L^2(\R\times  \mathbb{S}^2)}^{(1-3\delta)/2} \| \p_{x}^{\leq 2}f(x,w)\|_{L^2(\R\times  \mathbb{S}^2)}^{ \delta}.
\end{split}
\ee
Hence finishing the proof of  our desired estimate \eqref{2025July24eqn6}.

\textbf{Step 2.}\qquad Weighted Sobolev inequality  on $\R^3$.

Denote $q_0:=|t|-|x_0|$, $q= t-r$. Based on the possible size of $q_0$, we proceed in two steps as follows.

\textbf{Step 2A}.\quad If $|q_0|\leq 1$.

Let 
\be\label{2025Aug9eqn11}
f(t,r,\omega):=\phi(t,r\omega) \chi_{\leq -20}\big(\frac{r-|x_0|}{\langle t-|x_0|\rangle } \big)  .
\ee
From the estimate \eqref{2025July24eqn6} obtained in 
{Step 1}  and the volume measure in $\R^3$ in spherical coordinates,   we have
\be\label{2025July24eqn14}
\begin{split}
 |\phi(t,x_0)| & =\big| f(t,|x_0|, x_0/|x_0|)\big| \\ 
 &\lesssim  \delta^{-1}  \big( \|   f(t,r,w)\|_{L^2(\R\times  \mathbb{S}^2)} + \| \slashed\nabla^2  f(t,r,w)\|_{L^2(\R\times  \mathbb{S}^2)}  \big)^{(1+\delta)/2} \\ 
  &\quad \times    \| \p_{r}^{\leq 1}f(t,r,w)\|_{L^2(\R\times  \mathbb{S}^2)}^{(1-3\delta)/2} \| \p_{r}^{\leq 2}f(t,r,w)\|_{L^2(\R\times  \mathbb{S}^2)}^{ \delta}. \\ 
   & \lesssim \delta^{-1} |x_0|^{-1} \big(    \int_{\R} \int_{\mathbb{S}^2}  \big(\omega(t,|x|) \big)^2  \big(|   \phi(t,r\omega)|^2  +  |r^2 \slashed\nabla^2 \phi(t,rw)  |^2 \big) r^2 dr d\omega \big)^{(1+\delta)/2}\\ 
     &\quad \times \big[  \int_{\R} \int_{\mathbb{S}^2}  \big(\omega(t,|x|) \big)^2   \big(|   \phi(t,r\omega)|^2  +   |   \p_r \phi(t,r\omega)|^2 \big)   r^2 dr d\omega  \big]^{(1-3\delta)/2}\\ 
    &\quad \times \big[  \int_{\R} \int_{\mathbb{S}^2}  \big(\omega(t,|x|) \big)^2     \big(|   \phi(t,r\omega)|^2 +   |   \p_r \phi(t,r\omega)|^2 +   |   \p_r^2 \phi(t,r\omega)|^2 \big)   r^2 dr d\omega  \big]^{\delta},  \\  
   \end{split}
\ee
where $\omega(t,|x|)$ is defined in \eqref{weightfunctionsobo}. 
In the above estimate, we used the fact that the angular derivatives of the cutoff function are zero and the radial derivatives of the cutoff function are much less than $1$. If $\p_r$ hits   the cutoff function $\chi_{\leq -20}\big(\frac{r-|x_0|}{\langle t-|x_0|\rangle } \big)$ of $f$, see \eqref{2025Aug9eqn11}, we simply yield a good lower order term.

\textbf{Step 2B}.\quad If $|q_0|\geq 1$.

Let 
\[
   v(t,a, \omega):= \chi_{\leq -20}(a) \phi (t, (t-  \min\{q_0, t\}(1+a) ) \omega ).
\]
   From the estimate \eqref{2025July24eqn6} obtained in 
Step 1  and the volume measure in $\R^3$ in spherical coordinates,   we have   
\be\label{2025July24eqn15}
\begin{split}
 & |\phi(t, x_0)|^2= |v(t, 0, \omega_0)|^2 \\ 
   &\lesssim \delta^{-2} \big[   \int_{\R} \int_{\mathbb{S}^2} |   v(t,a,\omega)|^2  + |\slashed \nabla^2  v(t,a,\omega) | d \omega  d a\big]^{(1+\delta)/2}  \\
   &\quad \times \big[  \int_{\R} \int_{\mathbb{S}^2} | \p_a^{\leq 1} v(t,a,\omega)|^2 d \omega  d a \big]^{(1-3\delta)/2} \big[   \int_{\R} \int_{\mathbb{S}^2} | \p_a^{\leq 2} v(t,a,\omega)|^2 d \omega  d a \big]^{\delta} \\ 
   &\lesssim \delta^{-2} |x_0|^{-2} ( \min\{q_0, t\})^{-1} \big[   \int_{\R} \int_{\mathbb{S}^2}  \big(\omega(t,|x|)\big)^2  \big(|   \phi(t,r\omega)|^2  +  |r^2 \slashed\nabla^2 \phi(t,rw)  |^2 \big) r^2 dr d\omega \big]^{(1+\delta)/2}\\ 
    &\quad \times \big[  \int_{\R} \int_{\mathbb{S}^2}   \big(\omega(t,|x|)\big)^2 \big(|   \phi(t,r\omega)|^2  +   | |t-r|  \p_r \phi(t,r\omega)|^2 \big)   r^2 dr d\omega  \big]^{(1-3\delta)/2}\\ 
    &\quad \times \big[  \int_{\R} \int_{\mathbb{S}^2}   \big(\omega(t,|x|)\big)^2 \big(|   \phi(t,r\omega)|^2 +   | |t-r|  \p_r \phi(t,r\omega)|^2 +   | |t-r|^2  \p_r^2 \phi(t,r\omega)|^2 \big)   r^2 dr d\omega  \big]^{\delta},\\ 
\end{split}
\ee
where $\omega(t,|x|)$ is defined in \eqref{weightfunctionsobo}.  To sum up, after combining the obtained estimates \eqref{2025July24eqn14} and \eqref{2025July24eqn15}, we have 
\be\label{2025July30eqn35}
\begin{split}
 &|x_0|( \min\{q_0, t\})^{1/2}|\phi(t,x_0)| \\ 
 &\lesssim \delta^{-1}  \| \omega(t,|x|)  \big(|   \phi(t,x)|   +  ||x|^2 \slashed\nabla^2 \phi(t,x)  | \big)  \|_{L^2(\R^3)}^{(1+\delta)/2}\\ 
    &\quad \times  \| \omega(t,|x|)  \big(|   \phi(t,x)|   +  \langle t-|x|\rangle | \p_r \phi(t,x)  | \big)  \|_{L^2(\R^3)}^{(1-3\delta)/2}\\ 
    &\quad \times   \|  \omega(t,|x|) \big(|   \phi(t,x)|   +  \langle t-|x|\rangle | \p_r \phi(t,x)  |+  \langle t-|x|\rangle^2 | \p_r^2 \phi(t,x)  | \big)  \|_{L^2(\R^3)}^{\delta} \\ 
    &\lesssim \delta^{-1}( \min\{q_0, t\})^{2\delta} \big(\|\phi(t,x)\|_{L^2(\R^3)}  +  \|  \omega(t,|x|)   ||x|^2 \slashed\nabla^2 \phi(t,x)  |    \|_{L^2(\R^3)}\big)^{(1+\delta)/2} \\ 
    &\quad \times \big(\|\phi(t,x)\|_{L^2(\R^3)} +   \| \omega(t,|x|) \langle t-|x|\rangle  \p_r \phi(t,x)   \|_{L^2(\R^3)}\big)^{(1-3\delta)/2}\|\p_r^{\leq 2} \phi(t,x)\|_{L^2(\R^3)}^{\delta}.
\end{split}
\ee
Hence finishing the proof of our desired weighted Sobolev embedding \eqref{2025June9eqn1}.
\end{proof}
 
  \subsection{Proof of \eqref{finalestphysicspace} in  Theorem \ref{Thm:main1}}
  
  Based on the size of $x_0$, we proceed in two steps as follows.

\textbf{Step 1.}\qquad If $x_0\in supp(\chi_{\leq -8}(\cdot/|t|)) $.

Let $u $ be $\phi$ or derivatives of $\phi$. Recall \eqref{2025July28eqn21}. We have
\be\label{2025July28eqn22}
 \Delta u  - \frac{x_ix_j\p_{i}\p_{j}}{t^2} u = \frac{  S^2 u -2r \p_r S u     - S u + 2r\p_r u }{t^2} + \square u.
\ee
 Note that, 
\be\label{2025June15eqn1}
\begin{split}
& \int_{\Si_t}  |\Delta u -t^{-2} x_i x_j\p_{i}\p_{j} u|^2  \chi_{\leq -5}(\frac{|x|}{t})   \\ 
&=  \int_{\Si_t} \big[\big(|\Delta u|^2 + t^{-4}(x_ix_j)^2 (\p_{i}\p_{j}u)^2  \big)  - 2 t^{-2}(x_ix_j)  (\p_{i}\p_{j}u)\Delta u \big] \chi_{\leq -5}(\frac{|x|}{t}).   \\ 
\end{split}
\ee 
From the Cauchy-Schwarz inequality, we have
\be\label{2025July28eqn23}
\begin{split}
\big|\int_{\Si_t} t^{-2}(x_ix_j)  (\p_{i}\p_{j}u)\Delta u \chi_{\leq -5}(\frac{|x|}{t})\big|&\leq 2^{-8}\int_{\Si_t} \big|\nabla^2 u\big|\big|\Delta u\big| \chi_{\leq -5}(\frac{|x|}{t}) \\ 
&\leq 2^{-8}\big[ \int_{\Si_t} \big|\nabla^2 u\big|^2 \chi_{\leq -5}(\frac{|x|}{t})+ \big|\Delta u\big|^2 \chi_{\leq -5}(\frac{|x|}{t})  \big].
\end{split}
\ee 
Moreover, for any $i,j\in\{1,2,3\}$, by doing integration by parts in $x$, we have
\be\label{2025July28eqn24}
  \big| \int_{\Si_t} \p_i \p_j u \p_i\p_j u  \chi_{\leq -5}(\frac{|x|}{t}) - \p_i \p_i u \p_j\p_j u  \chi_{\leq -5}(\frac{|x|}{t})\big| \lesssim |t|^{-2} \int_{\Si_t} | \nabla   u|^2 
\ee
After combining the obtained estimates \eqref{2025July28eqn22}--\eqref{2025July28eqn24}, we conclude
\be\label{2025July28eqn25}
\begin{split}
 \int_{\R^3} |\nabla^2 u(t,x)| \chi_{\leq -5}(\frac{|x|}{t}) d x &\lesssim  \int_{\Si_t}  |\Delta u -t^{-2} x_i x_j\p_{i}\p_{j} u|^2  \chi_{\leq -5}(\frac{|x|}{t}) + |t|^{-2} \int_{\Si_t} | \nabla   u|^2 \\  
 &\lesssim  |t|^{-2} \int_{\Si_t} | \pr \Gamma^{\leq 1}   u|^2 + \int_{\Si_t} |\square u|^2. 
\end{split}
\ee 
Note that, from the $L^\infty\rightarrow L^2$-type Sobolev embedding in $3D$, we have
\be\label{2025July28eqn61}
  \|u (t,x)\|_{L^\infty_x}  \lesssim     \| \nabla \Gamma^{\leq 1}u(t,x)\|_{L^2_x}. 
\ee
From the above Sobolev embedding   and the obtained estimate \eqref{2025July28eqn25}, we conclude  
\be\label{smallxcase}
\begin{split}
 |\nabla \phi(t,x_0)|^2     \chi_{\leq -8}(\frac{|x_0|}{t}) 
&\lesssim    \int_{\R^3 } |\nabla^2\Gamma^{\leq 1} \phi|^2 \chi_{\leq -5}(\frac{|x|}{t}) d x\\
&\lesssim     |t|^{-2} \int_{\Si_t} | \pr \Gamma^{\leq 2}   \phi |^2 + \int_{\Si_t} |\square  \Gamma^{\leq 1}   \phi|^2.
\end{split}
\ee 
Moreover, after using the fact that $\p_t u = t^{-1}(Su-r\p_r u)$ and the Sobolev embedding \eqref{2025July28eqn61}, from the above   estimate \eqref{smallxcase}, we have 
\be\label{smallxcasetder}
\begin{split}
 |\p_t\phi(t,x_0)|^2  \chi_{\leq -8}(\frac{|x_0|}{t}) & \lesssim t^{-2}|S\phi(t,x_0)|^2  \chi_{\leq -10}(\frac{|x_0|}{t})  +  |\nabla \phi(t,x_0)|^2      \chi_{\leq -10}(\frac{|x_0|}{t}) \\ 
 &\lesssim     |t|^{-2} \int_{\Si_t} | \pr \Gamma^{\leq 2}   \phi |^2 + \int_{\Si_t} |\square  \Gamma^{\leq 1}   \phi|^2.
\end{split}
\ee

\textbf{Step 2.}\qquad If    $x_0\in supp(\chi_{> -8}(\cdot/|t|)) $.

Denote $q_0:=|t|-|x_0|$ $q= t-r$. 
Based on the possible size of $q_0$, we proceed in two steps as follows.

\textbf{Step 2A}.\quad If $|q_0|\leq 1$.

From the degenerated weighted  B\"ochner equality  \eqref{2025July25eqn1} in Lemma \ref{WDBI} with weight     $\om=r^4 \chi^2_{\geq -20}(|x|/t)$,   we have 
\be\label{2025June14eqn52}
\begin{split}
&\| |r|^{2}   \slashed\nabla_{}^{2} \phi  (x) \chi_{\geq -20}(|x|/t)\|_{L^2(\R^3 )}^2  \lesssim t^2   \int_{\Si_t}  | \Gamma^{\leq 2 }  \phi |^2 d x + t^4  \int_{\Si_t}  |\square  \phi|^2 d x . \\ 
   \end{split}
\ee
After combining the above estimate  \eqref{2025June14eqn52} and the weighted Sobolev embedding \eqref{2025June9eqn1} in Lemma \ref{Sobolevembedding}, we conclude 
\be\label{2025June14eqn53}
\begin{split}
  |\phi(t,x_0)|&\lesssim \delta^{-1} |t|^{-1}  \|\p_r^{\leq 2} \phi(t,x)\|_{L^2(\R^3)}^{ (1-\delta)/2}\big(    t  \|\Gamma^{\leq 2}\phi\|_{L^2} + t^2 \| \square \phi\|_{L^2}  \big)^{(1+\delta)/2} \\ 
  &\lesssim \delta^{-1} \big[|t|^{-(1-\delta)/2} \|\Gamma^{\leq 2}\phi\|_{L^2} + |t|^{\delta} \|\Gamma^{\leq 2}\phi\|_{L^2}^{ (1-\delta)/2} \| \square \phi\|_{L^2}^{(1+\delta)/2}\big]\\ 
  &\lesssim \delta^{-1} \big[  |t|^{-(1-\delta)/2} \|\Gamma^{\leq 2}\phi\|_{L^2}  + |t|^{ (1+\delta)/2 } \| \square \phi\|_{L^2}\big]. 
\end{split}
\ee

\textbf{Step 2B}.\quad If $|q_0|\geq 1$.

From the degenerated weighted  B\"ochner equality  \eqref{2025July25eqn1} in Lemma \ref{WDBI} with weight
$\om=\chi_{\leq -10}\big(\frac{r-|x_0|}{\langle t-|x_0|\rangle } \big) r^{4}$,   we have
\be\label{2025June9eqn11}
\begin{split}
& \int_{\Si_t}  \chi_{\leq -10}\big(\frac{r-|x_0|}{\langle t-|x_0|\rangle } \big) r^{4}  |    \slashed \nabla^{2}    \phi  (t, x)|^2 dx \\ 
&\lesssim |q_0|^{-2} |t|^2 \int_{\Si_t}    |\Gamma^{\leq 2} \phi |^2 d x + |t|^4  \int_{\Si_t}   |\square \phi |^2 d x + \int_{\Si_t}  \chi_{\leq -10}\big(\frac{r-|x_0|}{ \langle t-|x_0|\rangle } \big)  r^2 |\p_r (S\phi)|^2 d x.  \\  
\end{split}
\ee

To estimate the last term in the above estimate, we first do integration by parts in $r$ once and then use the Cauchy-Schwartz inequality. As a result, we have
\be\label{2025June16eqn32}
\begin{split}
& \int_{\Si_t}   \chi_{\leq -10}\big(\frac{r-|x_0|}{ \langle t-|x_0|\rangle } \big)  r^2 |\p_r (S\phi)|^2 d x\\ 
&\lesssim \big( \int_{\Si_t}  \chi_{\leq -10}\big(\frac{r-|x_0|}{ \langle t-|x_0|\rangle } \big)  r^4 |\p_r^2 (S\phi )|^2   \big)^{1/2} \| S\phi\|_{L^2}  + |q_0|^{-1}|t|^2   \| \Gamma^{\leq 2}\phi\|_{L^2}^2  . \\
 \end{split}
\ee
Let 
\be
\begin{split}
  {\omega}(t,r) & :=  \chi_{\leq -10}\big(\frac{r-|x_0|}{t-|x_0|} \big) |t-r|^2 r^2  \frac{1}{\big(1-\frac{r^2}{t^2}\big)^2 } =\chi_{\leq -10}\big(\frac{r-|x_0|}{t-|x_0|} \big) \big( \frac{t^2r}{t+r}\big)^2. \\ 
  \end{split}
\ee
From the degenerated weighted  B\"ochner equality  \eqref{2025July25eqn1} in Lemma \ref{WDBI},   we have
\be\label{2025July28eqn52}
\begin{split}
 \int_{\Si_t}  \chi_{\leq -10}\big(\frac{r-|x_0|}{ \langle t-|x_0|\rangle }  r^4 |\p_r^2 (S\phi )|^2 
 &\lesssim |q_0|^{-2} |t|^2\big[ \int_{\Si_t} \big(1-\frac{r^2}{t^2}\big)^2  \omega(t,r)   |\p_r^2 (S\phi )|^2\big]\\ 
 &\lesssim |q_0|^{-2} |t|^2\big[ t^2 \|\Gamma^{\leq 3} \phi\|_{L^2}^2 + t^4\|\square S\phi\|_{L^2}^2  \big].
\end{split}
\ee
After combining the above estimate and the obtained estimate \eqref{2025June16eqn32}, we have 
\be\label{2025July28eqn54}
\begin{split}
  \int_{\Si_t}   \chi_{\leq -10}\big(\frac{r-|x_0|}{ \langle t-|x_0|\rangle } \big)  r^2 |\p_r (S\phi)|^2 d x  
&\lesssim |q_0|^{-1}|t|^2   \| \Gamma^{\leq 3}\phi\|_{L^2}^2 + |q_0|^{-1}|t|^3  \| S\phi\|_{L^2} \|\square S\phi\|_{L^2}\\ 
&\lesssim |q_0|^{-1}|t|^2   \| \Gamma^{\leq 3}\phi\|_{L^2}^2 + |q_0|^{-1}|t|^4 \|\square S\phi\|_{L^2}^2. 
\end{split}
\ee
Thanks to the cutoff function $\chi_{\leq -10}\big(\frac{|x|-|x_0|}{\langle t-|x_0| \rangle} \big)$, we have $t-r\sim q_0$ and $r\sim t$. As a result, similar to  the above obtained estimate  \eqref{2025July28eqn54}, we have 
\be\label{2025July28eqn55}
\begin{split}
  & \| \chi_{\leq -10}\big(\frac{|x|-|x_0|}{\langle t-|x_0| \rangle} \big)    \langle t-|x|\rangle  \p_r \phi(t,x)   \|_{L^2(\R^3)} \\ 
  & \lesssim |q_0| |t|^{-1} \| \chi_{\leq -10}\big(\frac{|x|-|x_0|}{\langle t-|x_0| \rangle} \big)   r  \p_r \phi(t,x)   \|_{L^2(\R^3)} \\ 
  &\lesssim |q_0| |t|^{-1}\big[|q_0|^{-1/2}|t|   \| \Gamma^{\leq 2}\phi\|_{L^2}  + |q_0|^{-1/2}|t|^2 \|\square  \phi\|_{L^2}  \big] \\ 
  & \lesssim  |q_0|^{1/2}    \| \Gamma^{\leq 2}\phi\|_{L^2}  + |q_0|^{1/2}|t|^{ } \|\square  \phi\|_{L^2}.
\end{split}
\ee
After combining the above estimates \eqref{2025June9eqn11}, \eqref{2025July28eqn54} and \eqref{2025July28eqn55} with the weighted Sobolev embedding  \eqref{2025June9eqn1} in Lemma \ref{Sobolevembedding}, we conclude 
\be\label{2025July28eqn56}
\begin{split}
|\phi(t,x_0)|&\lesssim \delta^{-1}|t|^{-1} |q_0|^{-1/2+2\delta}\|\p_r^{\leq 2} \phi(t,x)\|_{L^2(\R^3)}^{\delta}   \big(|q_0|^{1/2}    \| \Gamma^{\leq 2}\phi\|_{L^2}  + |q_0|^{1/2}|t|^{ } \|\square  \phi\|_{L^2}\big)^{(1-3\delta)/2} \\ 
&\quad \times \big( |q_0|^{-1/2}|t| \| \Gamma^{\leq 3}\phi\|_{L^2} + |t|^2    \|\square \phi\|_{L^2} + |q_0|^{-1/2} |t|^{2}  \|\square S\phi\|_{L^2}  \big)^{(1+\delta)/2}\\ 
&\lesssim \delta^{-1} \big[|t|^{-(1-\delta)/2} |q_0|^{-1/2+\delta}\|\Gamma^{\leq 3} \phi(t,x)\|_{L^2 }  +  |t|^{(1+\delta)/2}\|\square \Gamma^{\leq 1} \phi\|_{L^2}\big] . 
\end{split}
\ee
To sum up,     our desired estimate \eqref{finalestphysicspace} holds from \eqref{smallxcase},\eqref{smallxcasetder}, \eqref{2025June14eqn53}, and \eqref{2025July28eqn56}. Hence finishing the proof of Theorem \ref{Thm:main1}. 

  \subsection{Proof of \eqref{finalestphysicspaceexterior} in  Theorem \ref{Thm:main1} }

  Based on the size of $x_0$, we proceed in two steps as follows. 

\vo 

\textbf{Step 1.}\qquad If $x_0\in supp(\chi_{\leq -8}(\cdot/|t|)) $. 

\vo

This case follows exactly same as what we did in previous subsection. The obtained estimates \eqref{smallxcase} and  \eqref{smallxcasetder} are still valid. Therefore, we have
\be
 |\pr \phi(t,x_0)|^2  \chi_{\leq -8}(\frac{|x_0|}{t})  \lesssim     |t|^{-2} \int_{\Si_t} | \pr \Gamma^{\leq 2}   \phi |^2 + \int_{\Si_t} |\square  \Gamma^{\leq 1}   \phi|^2.
\ee

\vo 

\textbf{Step 2.}\qquad If $x_0\in supp(\chi_{> -8}(\cdot/|t|)) $. 

\vo

Choose $\omega(t,|x|) $ to be the one prescribed in \eqref{weightfunctionsobo}. Since $0\leq \omega(t,r)\leq 1$, see \eqref{2025July28eqn1}, we have 
 {\[
\begin{split}
 \int_{\Si_t} \omega(t,r)  \big| \nabb^2 \phi\big|^2  
&\lesssim \underbrace{\int_{\Si_t} \omega(t,r)\big[\big| \nabb^2 \phi\big|^2+   \big(1-     \frac{r^2}{t^2} \big)\big|\nabb\pr_r\phi|^2    \big]}_{\text{controlled by Lemma \ref{WDBI}}}  + \underbrace{\int_{\Sext_t} \big(     \frac{r^2}{t^2}-1 \big)\big|\nabb\pr_r\phi|^2}_{\text{controlled by Lemma \ref{exteriorestimate}}}. 
\end{split}
\]}
From the weighted Bochner identity \eqref{2025July25eqn1} in  Lemma \ref{WDBI}, we have
\[
\int_{\Si_t} \big(\omega(t,|x|)\big)^2  \big[\big| \nabb^2 \phi\big|^2+   \big(1-     \frac{r^2}{t^2} \big)\big|\nabb\pr_r\phi|^2\big] \lesssim |t|^{-2} \|\Gamma^{\leq 2}\phi\|_{L^2}^2 +\| f\|_{L^2}^2.
\]
After combining the above estimate with the   estimate \eqref{2025Aug4eqn81} in the exterior region, we have
\be\label{2025Aug4eqn83}
\begin{split}
  \int_{\Si_t}  \omega(t,|x|) \big| \nabb^2 \phi\big|^2&\lesssim |t|^{-2} \big( \|\Gamma^{\leq 2}\phi(t)\|_{L^2}^2+  \|r^2\Gamma^{\leq 2}\phi(0)\|_{L^2}^2 \big)\\ 
  &\quad  +\| f\|_{L^2}^2  + t^{-2}  \big(\int_0^t \|   u  \pr  f\|_{L^2(\Sext_t)} \big)^2.
\end{split}
\ee
From the above estimate and the weighted Sobolev embedding \eqref{2025June9eqn1} in  Lemma \ref{Sobolevembedding}, we have 
 \be
 \begin{split}
 |\pr \phi(t,x_0)|  \chi_{> -8}(\frac{|x_0|}{t}) & \lesssim \delta^{-1} |t|^{-1/2} |x_0|^{\delta} \|\Gamma^{\leq 3}\phi(t) \|_{L^2}^{(1-\delta)/2}\big[    \big( \|\Gamma^{\leq 3}\phi(t)\|_{L^2} +  \|r^2\Gamma^{\leq 3}\phi(0)\|_{L^2}\big) \\ 
 &\quad   + |t|\| \Gamma^{\leq 1} f\|_{L^2} +    \big(\int_0^t \|   u  \pr  \Gamma^{\leq 1} f\|_{L^2(\Sext_t)} \big)\big]^{(1+\delta) /2}.
 \end{split}
\ee
Hence finishing the proof of our desired estimate \eqref{finalestphysicspaceexterior}.
  \section{Proof of Theorem \ref{Thm:main2}}

As discussed in Remark \ref{geometricmeasure}, the proof of Theorem \ref{Thm:main2} hinges on a geometric observation concerning the localized set $S_{k,l}(t,x)$. For the reader's convenience, we restate its definition below:   \be\label{mainestvol}
\begin{split}
  \forall x \in \R^3, t\in \R/\{0\},  \mu\in\{+,-\},  \qquad & S_{k,l}(t,x):=\big\{\xi: | 1+ \frac{\mu x\cdot\xi}{t|\xi|} |\leq  2^{l}, |\xi|\in  [2^{k-1}, 2^{k}] \big\}. 
\end{split}
\ee

First, we explain the relevance of this set. From the standard Duhamel's formula for the wave equation, it suffices to estimate the associated half-wave. Specifically, we define
\be\label{aug14eqn32}
U(t):= (\p_t - i |\nabla|) \phi, \quad  V(t):= e^{i t |\nabla|} U(t). 
\ee
 
Hence we can recover $\pr \phi$ from $U$ and its complex conjugate as follows, 
\be\label{equality1}
\begin{split}
\p_t \phi  &  = \frac{U(t)+ \overline{U(t)}}{2}, \quad \nabla \phi  = \frac{\nabla\big(U(t)- \overline{U(t)}\big) }{i 2 |\nabla| }. 
 \end{split}
\ee
Moreover, we also have 
\be\label{equality2}
\p_t V(t)   = e^{i t |\nabla| }(\p_t + i  |\nabla|) U(t) = e^{i t|\nabla|} \Box  \phi. \\ 
\ee
For the homogeneous case, $V(t) {=V(0)}$ plays the role of the initial data.  {  Thus   $V(t)$ is only useful in the inhomogenuous case.   }

 On the Fourier side, from \eqref{aug14eqn32}, we have
\[
 U(t,x)=\int_{\R^3 } e^{i x\cdot \xi - i t|\xi|} \widehat{V}(t,\xi) d \xi,\quad    \overline{U}(t,x)=\int_{\R^3 } e^{i x\cdot \xi + i t|\xi|} \widehat{\overline{V}}(t,\xi) d \xi. 
\]

Inspired by stationary phase analysis, we aim to exploit the oscillation of the phase in the radial direction. Since the function $\xi \mapsto |\xi|$ is homogeneous of degree one, we have 
\be\label{aug25eqn2}
 \forall\mu\in\{+,-\},\quad e^{i x\cdot \xi + i t\mu  |\xi| }= \frac{\xi\cdot \nabla_\xi}{i x\cdot \xi + i t\mu  |\xi|}\big( e^{i x\cdot \xi + i  t\mu |\xi|} \big).
\ee
Owing to this  homogeneity property, the radial derivative does not introduce any further singularities in the integration by parts procedure. Consequently, we have
\[
\xi\cdot \nabla_\xi( \frac{1}{i x\cdot \xi + i t\mu  |\xi|})= \frac{-1}{i x\cdot \xi + i t\mu |\xi|}. 
\]
To assess the severity of the potential singularity in the denominator of \eqref{aug25eqn2}, we must analyze the localized set $S_{k,l}(t,x)$ defined in \eqref{mainestvol}. This will be the focus of the following subsection.

\subsection{Measure estimate of the dyadically localized set}

\begin{lemma}\label{mainmeasureestlemma}
For the localized set $S_{k,l}(t,x)$ defined in \eqref{mainestvol}, the following measure estimate holds, 
\be\label{measureestlemma}
|S_{k,l}(t,x)|\lesssim 2^{3k+l}.  
\ee
\end{lemma}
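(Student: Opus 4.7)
The plan is to reduce the measure estimate to a spherical cap estimate by passing to polar coordinates $\xi = \rho \om$, with $\rho = |\xi| > 0$ and $\om \in \mathbb{S}^2$. The key observation is that the constraint
\[
\Big|1+\frac{\mu x\cdot\xi}{t|\xi|}\Big| = \Big|1+\frac{\mu x\cdot\om}{t}\Big|
\]
is homogeneous of degree zero in $\xi$ and hence depends only on the direction $\om$, not on $\rho$. Therefore $S_{k,l}(t,x)$ factorizes in polar coordinates as the product of the radial annulus $\rho\in[2^{k-1},2^k]$ with the angular cap
\[
A_l(t,x):=\Big\{\om\in\mathbb{S}^2 \;:\; \Big|1+\frac{\mu x\cdot\om}{t}\Big|\le 2^l\Big\}.
\]
Using $d\xi = \rho^2\,d\rho\,d\sigma(\om)$, we then have
\[
|S_{k,l}(t,x)| = \Big(\int_{2^{k-1}}^{2^k}\rho^2\,d\rho\Big)\cdot |A_l(t,x)| \les 2^{3k}\,|A_l(t,x)|,
\]
so the entire claim reduces to proving $|A_l(t,x)| \les 2^l$ uniformly in $t$ and $x$.

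For the spherical cap estimate, I would parametrize $\mathbb{S}^2$ by the polar angle $\th\in[0,\pi]$ measured from the unit vector $e := \mu\,\mathrm{sign}(t)\,x/|x|$ when $x\neq 0$; the case $x=0$ is trivial, since then $A_l$ is empty if $2^l<1$ and equals $\mathbb{S}^2$ if $2^l\ge 1$, which already gives $|A_l|\le 4\pi\le 4\pi\cdot 2^l$ in the nontrivial case. Writing $a = |x|/|t| \ge 0$, a short computation reduces the constraint defining $A_l$ to
\[
|1 + a\cos\th|\le 2^l,\qquad\text{i.e.}\qquad \cos\th \in \Big[\tfrac{-1-2^l}{a},\tfrac{-1+2^l}{a}\Big],
\]
an interval of length $2^{l+1}/a$. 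Since $d\sigma = \sin\th\,d\th\,d\vphi$ and $d(\cos\th) = -\sin\th\,d\th$, the spherical measure of $A_l(t,x)$ is exactly $2\pi$ times the length of the intersection of this interval with $[-1,1]$.

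The only point requiring care is the regime in which $a$ is small, where the interval length $2^{l+1}/a$ is not automatically bounded by a multiple of $2^l$; this is the main (minor) obstacle. I would dispatch it by splitting into two subcases: if $2^l < 1/2$, the interval can only intersect $[-1,1]$ when $a\ge 1-2^l > 1/2$, in which case its effective length is at most $4\cdot 2^l$; if instead $2^l\ge 1/2$, the trivial bound $|A_l|\le 4\pi \les 2^l$ already suffices. Combining the two subcases yields $|A_l(t,x)| \les 2^l$ uniformly in $t$ and $x$, and inserting this into the factorization above gives the desired estimate $|S_{k,l}(t,x)|\les 2^{3k+l}$.
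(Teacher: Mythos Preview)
Your proof is correct, and your overall reduction---passing to polar coordinates, exploiting the degree-zero homogeneity of the constraint to factor off the radial annulus, and reducing to the spherical cap estimate $|A_l|\les 2^l$---matches the paper's setup exactly. The paper likewise writes $\beta=|x|/t$, reduces to bounding $\int_{\Theta_l}\sin\theta\,d\theta$ over the angular set $\Theta_l=\{\theta\in[0,\pi]:|1-\beta\cos\theta|\le 2^l\}$, and disposes of the case $2^l\gtrsim 1$ trivially.

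Where you diverge is in the proof of the cap estimate itself, and your argument is cleaner. You observe that the substitution $u=\cos\theta$ (with $du=-\sin\theta\,d\theta$) turns $\int_{\Theta_l}\sin\theta\,d\theta$ into the Lebesgue measure of the interval $\{u\in[-1,1]:|1+au|\le 2^l\}$, and then the dichotomy on whether $2^l<1/2$ forces $a>1/2$ and hence interval length $\le 2^{l+1}/a\le 4\cdot 2^l$. The paper does not make this linearizing substitution; instead it splits $\Theta_l$ into three angular regions $[0,2^{l/2+10}]$, $[2^{l/2+10},2^{-10}]$, $[2^{-10},\pi/2]$ and treats each by hand, using $\sin\theta\le\theta$ for small angles, a contradiction argument plus Taylor's theorem to show the middle region has diameter $\lesssim 2^l/\sin\theta^\star$, and the mean value theorem for the third. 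Your route is shorter and more transparent; the paper's route, while more laborious, makes the geometry of the thin cap (near $\theta\approx\arccos(1/\beta)$) somewhat more explicit.
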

\begin{proof}
Denote $\beta:=|x|/t.$ Note that 
\be\label{aug25eqn1}
1+ \frac{\mu x\cdot\xi}{t|\xi|}  = 1+\mu \beta \cos(\angle(x, \xi  )). 
\ee
The desired estimate \eqref{measureestlemma} is trivial if $l\geq-100$ because
\[
|S_{k,l}|\leq |\{\xi: \xi \in \R^3,  |\xi|\sim 2^k\}|\lesssim 2^{3k}. 
\]
Now, it would be sufficient to consider the case $l\leq -100$.   For this case we have   $|\beta|\geq 1/2$ because otherwise we have $| 1+\mu \beta \cos(\angle(x, \xi))|\geq 2^{-1} $, which implies that $S_{k,l}=\emptyset$ if $l\leq -100.$

Let $\{x/|x|, \mathbf{e}_1, \mathbf{e}_2\}$ be an fixed orthonormal frame. In terms of spherical coordinate,  we have $\xi=|\xi|\big(\sin \theta \cos\phi \mathbf{e}_1 + \sin \theta \sin\phi \mathbf{e}_2 -\mu \cos\theta x/|x|\big)$. Let
\[
\Theta_{l}:=\big\{\theta: |1-  \beta \cos(\theta) |\leq  2^{l}, \theta\in [0,\pi]   \big\}.
\]
Due to the fact that $\beta\geq 0$ and $l\leq -100.$ It's obvious that $\Theta_{l}\subset[0,\pi/2]$.
Therefore, in terms of the spherical coordinate, we have
\[
\int_{S_{k,l}}1  d \xi \lesssim \int_{c 2^{k-2}}^{C 2^{k+2}} \int_{\Theta_{l}} r^2 \sin \theta d r  d \theta\lesssim 2^{3k}  \int_{\Theta_{l}}  \sin \theta  d \theta.
\]
To sum up, to prove our desired estimate \eqref{measureestlemma},  it would be sufficient to prove the following estimate, 
\be\label{2025July28eqn81}
\textup{(Claim)}:\qquad   \int_{ \Theta_{l}} \sin \theta d \theta  \lesssim 2^l.
\ee
  Note that
\[
\int_{ \Theta_{l}\cap [0,2^{l/2+10} ]  } \sin \theta d \theta \lesssim  \int_{  [0,2^{l/2+10} ]  } \sin \theta d \theta = \int_0^{2^{l/2+10}} \sin \theta d \theta   \lesssim 2^l. 
\]
It remains to prove the following estimate 
\[
\int_{ \Theta_{l}\cap [2^{l/2+10}, \pi/2]  } \sin \theta d \theta  \lesssim 2^l
\]
Note that, if $0<x\ll 1$, we have 
\[
\sin x \geq \frac{x}{2}, \quad \Longrightarrow \big|\cos(2x) - \cos(x)\big|  =\int_{x}^{2x}  \sin \theta d \theta \geq \frac{3x^2}{4}. 
\]
If there exists $\theta_1, \theta_2 \in \Theta_{l}\cap [2^{l/2+10}, 2^{-10}]  $, s.t, $\theta_1 \geq 2\theta_2$, then from the monotonicity of $\cos x$ and   the above estimate, we have
\[
\big|\cos(\theta_1) - \cos(\theta_2)\big| = \cos\theta_2 -\cos\theta_1 \geq \cos\theta_2 -\cos(2\theta_2)\geq \frac{3(\theta_2)^2}{4}\geq \frac{3}{4} 2^{l+20}.
\]
On the other hand, since $\theta_1, \theta_2\in \Theta_{l},$ and $|\beta|>1/2$,  we have
\[
\big|\cos(\theta_1) - \cos(\theta_2)\big|= \beta^{-1}\big|(1-\beta\cos\theta_1)-(1-\beta\cos\theta_2) \big|\leq \beta^{-1} 2^{l+1}\leq 2^{l+2}.
\]
Hence we arrive at a  contradiction.

 To sum up,  we conclude
 \[
\forall \theta_1, \theta_2 \in \Theta_{l}\cap [2^{l/2+10}, 2^{-10}],\quad  \theta_1/2\leq \theta_2\leq 2\theta_1. 
\]
 With the above proved fact and the Taylor's theorem,  for any $\theta_1, \theta_2 \in \Theta_{l}\cap [2^{l/2+10}, 2^{-10}]  $, we have
\[
|\sin \theta_1||\theta_1-\theta_2|\leq 2^{4} \beta|\cos\theta_1-\cos\theta_2|= 2^4 \big|(1-\beta\cos\theta_1)-(1-\beta\cos\theta_2) \big| \leq 2^{l+5}. 
\]
Therefore, after fixing an arbitrary  $\theta^\star \in  \Theta_{l}\cap [2^{l/2+10}, 2^{-10}] $, we have 
\[
\int_{ \Theta_{l}\cap [2^{l/2+10}, 2^{-10}]  } \sin \theta d \theta  \lesssim  \int_{\theta^\star -2^{l+5}/\sin \theta^{\star}}^{\theta^\star + 2^{l+5}/\sin \theta^{\star}} \sin \theta^{\star} d \theta \lesssim 2^l. 
\]

Lastly, we estimate the contribution from the set $\Theta_{l}\cap [ 2^{-10}, \pi/2]$. Note that $\sin x$ is bounded from the above and the below in the set $[ 2^{-10}, \pi/2]$. Therefore,  for   any $\theta_1, \theta_2 \in \Theta_{l}\cap [ 2^{-10}, \pi/2]$, by   Lagrange mean value theorem and the fact that $\beta >1/2$, we have
\[
 |\theta_1-\theta_2|\lesssim  \beta|\cos\theta_1-\cos\theta_2|= \big|(1-\beta\cos\theta_1)-(1-\beta\cos\theta_2) \big|\leq 2^{l+5}
\]
 Therefore, again after fixing an arbitrary  $\theta_\star \in  \Theta_{l}\cap [2^{l/2+10}, 2^{-10}] $, we have 
\[
\int_{ \Theta_{l}\cap [ 2^{-10}, \pi/2]  } \sin \theta d \theta  \leq  \int_{\theta_\star -C 2^{l} }^{\theta^\star + C 2^{l} } d \theta \lesssim 2^l. 
\]
To sum up, we have
\[
  \int_{ \Theta_{l}} \sin \theta d \theta  \lesssim 2^l.
\]
Hence finishing the proof of the Claim in \eqref{2025July28eqn81}, which implies further the estimate    \eqref{measureestlemma}. 
\end{proof}
\subsection{Proof of \eqref{2025July23eqn10}}

For $k,k_1,k_2\in\Z,$  we use $ {P}_k $ and $P_{[k_1,k_2]}$ to denote  the Fourier multiplier operator with symbol $\chi_k(\xi)$ and $\chi_{[k_1,k_2]}(x)$ respectively.  Recall \eqref{equality1}. From the partition of unity, we have 
\be\label{2025July29eqn1}
|\pr \phi(t,x)| \lesssim \sum_{k\in \Z} |P_k U(t,x)|. 
\ee 
 {In view of the definition of $V$, see  \eqref{aug14eqn32}, we have  $S V(t)=  e^{i t |\nabla|} SU(t)$\footnote{ The   step relies on the fact that the operator $S$ commutes with $e^{it \nabla}$, i.e., $[S, e^{it \nabla}] = 0$.  This follows immediately from the identity $(t\partial_t - \xi\cdot \nabla_\xi)e^{i t|\xi|} = 0$, obtained by direct computation }. Therefore, setting $V^{S}(t,x):=  e^{i t |\nabla|} SU(t)$, we infer that   
\be\label{aug14eqn31}
\begin{split}
  \xi\cdot\nabla_\xi \widehat{V}(t, \xi)&= -\widehat{SV}(t, \xi) + t\p_t \hat{V}(t,\xi)-3\hat{V}(t,\xi)  \\
  &=-    \widehat{V^S}(t, \xi) + t\p_t \widehat{V}(t, \xi)- 3\widehat{V}(t, \xi)\\ 
  &= -    \widehat{V^S}(t, \xi) + t e^{it |\xi|} \widehat{\square\phi}(t,\xi)- 3\widehat{V}(t, \xi). 
\end{split}
\ee

 From the definition of $V(t)$ in \eqref{aug14eqn32}, we have 
\[
P_k U(t, x) = \int_{\R^3} e^{i x\cdot \xi - i t|\xi|}\widehat{V}(t, \xi)\chi_k(\xi ) d \xi.
\]

Let $t\in [2^{m-1}, 2^m], m \in \Z_{+}$, $\bar{l}:=-m-k$.   {Motivated by  the stationary phase analysis, we use the following inhomogeneous cutoff functions with a chosen threshold $\bar{l}$ to quantify the smallness of the denominator in \eqref{aug25eqn2},}
\be\label{cutoff}
\varphi^l_{\bar{l}}(\xi, x,t):=\left\{\begin{array}{cc}
 \displaystyle{\chi_{\leq -m-k}\big(1- \frac{  x\cdot\xi}{t|\xi|} \big) } & \textup{if\quad} l=\bar{l}, \\
  \displaystyle{\chi_{l}\big(1- \frac{  x\cdot\xi}{t|\xi|} \big) } &  \textup{\,\,\,\, if\quad } l\in (\bar{l},2),\\
   \displaystyle{\chi_{\geq 2}\big(1- \frac{  x\cdot\xi}{t|\xi|} \big) } & \textup{if\quad} l=2, \\
\end{array}\right.\qquad
\ee
Note that  we have 
\beaa
  \sum_{l\in [\bar{l}, 2]\cap \Z}\varphi^l_{\bar{l}}(\xi, x,t) =1.
 \eeaa
 From the above partition of unity,  we decompose $U(t, x)$ as follows, 
\be\label{decomgeneralI}
\begin{split}
P_k(U) (t, x) &=\sum_{l\in [\bar{l}, 2]\cap \Z} J_{l;\bar{l}}(t, x),\qquad \bar{l}:=-m-k, \\\
 J_{l;\bar{l}}(t, x)&=  \int_{\R^3} e^{i x\cdot \xi - i t |\xi|} \widehat{V}(t,\xi)\chi_k(\xi ) \varphi^l_{\bar{l}}(\xi, x,t) d\xi.   \\
\end{split}
\ee
Based on the size of $l$, we proceed in two steps as follows.

\vo 

\textbf{Step 1.}\quad If $l=\bar{l}$. 

\vo 
 
Recall the definition of    $\bar{l}:=-m-k$, and the fact that  $t\in [2^{m-1}, 2^m], m \in \Z_{+}$. Note that, after first using the Cauchy-Schwarz inequality and then using   the measure estimate \eqref{measureestlemma} in  Lemma \ref{mainmeasureestlemma},  {the definition of $V$ in \eqref{aug14eqn32} } and the equality \eqref{equality1},   we have
\be\label{thresholdest}
\begin{split}
 \big|J_{\bar{l};\bar{l}}(t, x)\big|&\lesssim (2^{3k+\bar{l}})^{1/2}\| \widehat{V}(t,\xi) \chi_k(\xi ) \|_{L^2_\xi } = 2^{-m/2+k} \| e^{it |\xi|} \widehat{U}(t,\xi) \chi_k(\xi ) \|_{L^2_\xi }\\ 
 &\lesssim |t|^{-1/2 } 2^{ k} \big( \|\widehat{\p_t\phi}(t, \xi)  \chi_k(\xi )  \|_{L^2_\xi} +   \|\widehat{\nabla \phi}(t, \xi)  \chi_k(\xi )  \|_{L^2_\xi} \big) \\
 &\lesssim  |t|^{-1/2 } 2^{k} \| P_k(\pr \phi)(t,x)\|_{L^2_x} \\ 
  &\lesssim  |t|^{-1/2 } \big[2^{k} \| P_k(\pr \phi)(t,x)\|_{L^2_x}  \mathbf{1}_{k\leq 0} + 2^{k-2k}  \| P_k(\pr \nabla^2 \phi)(t,x)\|_{L^2_x} \mathbf{1}_{k\geq 0} \big] \\ 
  &\lesssim  |t|^{-1/2 } 2^{k-2\max\{k,0\}} \| P_k(\pr \nab^{\leq 2} \phi)(t,x)\|_{L^2_x}   \\ 
 &\lesssim |t|^{-1/2 } 2^{-|k|} \| P_k(\pr\Gamma^{\leq 2}\phi)(t,x)\|_{L^2_x}.
 \end{split}
\ee
 
Moreover, if $|x|\leq t-1$, we can improve the above estimate by using the equality \eqref{aug25eqn2} and doing integration by parts in $\xi$ once along the radial direction. As a result, we have  
\be\label{2025July29eqn11}
J_{\bar{l};\bar{l}}(t, x)= - \int_{\R^3} e^{i x\cdot \xi - i  t |\xi|} (3+ \xi\cdot \nabla_\xi)\big(   \frac{\widehat{V}(t,\xi)\chi_k(\xi )}{i x\cdot \xi - i t  |\xi|}   \big) \varphi^l_{\bar{l}}(\xi, x,t) d\xi. 
\ee
In the above estimate, we used the fact that 
\[
 \xi\cdot\nabla_\xi\big(\varphi^l_{\bar{l}}(\xi, x,t)\big)=0, 
\]
which follows from the direct computation, see \eqref{cutoff}. Note that, due to the assumption $|x|\leq t-1$, we have
\be\label{2025July29eqn31}
  | x\cdot \xi -   t  |\xi||\gtrsim |\xi|(|t|-|x|). 
\ee
From the above estimate, the last equality in \eqref{aug14eqn31}, the Cauchy-Schwarz inequality and then use  the measure estimate \eqref{measureestlemma} in  Lemma \ref{mainmeasureestlemma},   we have
\be\label{2025July29eqn12}
\begin{split}
 \big|J_{l;\bar{l}}(t, x)\big|& \lesssim   (|t|-|x|)^{-1} 2^{-k} (2^{3k+\bar{l}})^{1/2} \big[\|\xi\cdot \nabla_\xi \widehat{V} (t,\xi) \|_{L^2_\xi} +  \|\widehat{V}(t,\xi) \|_{L^2_\xi}\big]\\ 
 &\lesssim (|t|-|x|)^{-1} |t|^{-1/2}\big[ \|  \pr \Gamma^{\leq 1}\phi (t,x)\|_{L^2_x} +t\|\square \phi (t,x)\|_{L^2_x}\big]. 
\end{split}
\ee
After combining  \eqref{thresholdest} and the above estimate \eqref{2025July29eqn12}, we have
\be\label{2025July29eqn13}
   \big|J_{l;\bar{l}}(t, x)\big|  \lesssim  2^{-\delta |k|} |t|^{-1/2}  \big[ \langle |t|-|x|\rangle ^{-1+\delta} \mathbf{1}_{|x|\leq t} +\mathbf{1}_{|x| > t}  \big] \big[ \| \pr \Gamma^{\leq 2}\phi (t,x)\|_{L^2_x}+t\| \square \phi (t,x)\|_{L^2_x}\big].
\ee

\textbf{Step 2.}\quad If $l\in (\bar{l},2]\cap\Z$.

Recall the definition of cutoff functions in \eqref{cutoff}. Note that  
\be\label{aug25eqn3}
\forall \xi\in supp(\varphi^l_{\bar{l}}(\xi, x,t)\chi_k(\xi)), \qquad |x\cdot \xi -   t |\xi||\gtrsim 2^{m+k+l}.
\ee
We first do integration by parts in $\xi$ once. As a result, we yield the same formula as in \eqref{2025July29eqn11}.  From the equality of $  \xi\cdot\nabla_\xi \widehat{V}(t, \xi)$ in \eqref{aug14eqn31}, we first decompose $J_{\bar{l};\bar{l}}(t, x)$ in \eqref{2025July29eqn11} into two pieces as follows, 
\be\label{2025July29eqn22}
\begin{split}
J_{\bar{l};\bar{l}}(t, x) & = J^1_{\bar{l};\bar{l}}(t, x)+ J^2_{\bar{l};\bar{l}}(t, x), \\ 
J^1_{\bar{l};\bar{l}}(t, x)& =  - \int_{\R^3} e^{i x\cdot \xi - i  t  |\xi|}\big[ ( \xi\cdot \nabla_\xi)\big( \frac{  \chi_k(\xi )}{i x\cdot \xi - i t  |\xi|}   \big) \widehat{V}(t,\xi) +   \frac{ \widehat{V^S}(t, \xi) \chi_k(\xi )}{i x\cdot \xi - i t  |\xi|}   \big] \varphi^l_{\bar{l}}(\xi, x,t) d\xi, \\  
J^2_{\bar{l};\bar{l}}(t, x)& =   - t \int_{\R^3} e^{i x\cdot \xi }      \widehat{\square \phi }(t, \xi)  \big( \frac{  \chi_k(\xi )}{i x\cdot \xi - i t |\xi|}   \big)  \varphi^l_{\bar{l}}(\xi, x,t) d\xi. \\ 
\end{split}
\ee
For $J^i_{\bar{l};\bar{l}}(t, x),i\in\{1,2\}$, after first using the Cauchy-Schwarz inequality and then using   the measure estimate \eqref{measureestlemma} in  Lemma \ref{mainmeasureestlemma},  from \eqref{aug25eqn3}, we have
\be\label{2025July29eqn23}
\begin{split}
\sum_{l\in (\bar{l},2]\cap \Z}\big| J^1_{l;\bar{l}}(t, x)\big|
&\lesssim \sum_{l\in (\bar{l},2]\cap \Z} 2^{-m-(k+l)} 2^{(3k+l)/2}\big(\|  \widehat{V^S} (t,\xi)   \|_{L^2}+ \|    \widehat{V}(t,\xi)    \|_{L^2}\big) \\ 
&\lesssim |t|^{-1/2} 2^{-|k|}  \|  \pr \Gamma^{\leq 3}\phi (t,x)\|_{L^2_x}  \\
\sum_{l\in (\bar{l},2]\cap \Z}\big| J^2_{l;\bar{l}}(t, x)\big|
&\lesssim \sum_{l\in (\bar{l},2]\cap \Z} 2^{-m-(k+l)} 2^{(3k+l)/2} |t|\|  \widehat{\square \phi} (t,\xi)   \|_{L^2} \\ 
&\lesssim |t|^{1/2} 2^{-|k|}  \|\square \Gamma^{\leq 2}\phi (t,x)\|_{L^2_x}. 
\end{split}
\ee

Now, for the case $|x|\leq t-1$, we wish to prove an improved decay  rate of the distance with respect to the light cone for $J_{\bar{l};\bar{l}}^1(t, x)$. We do integration by parts in ``$\xi$'' in radial direction again for $J_{\bar{l};\bar{l}}^1(t, x)$ in the formula \eqref{2025July29eqn22}. As a result, we have 
\be\label{2025July29eqn63}
\begin{split}
J^1_{\bar{l};\bar{l}}(t, x)& =  - \int_{\R^3} e^{i x\cdot \xi - i  t  |\xi|} (3+ \xi\cdot \nabla_\xi)\Big[ \frac{1}{i x\cdot \xi - i t |\xi|} \big[  \frac{ \widehat{V^S}(t, \xi) \chi_k(\xi )}{i x\cdot \xi - i t  |\xi|}  \\ 
&\quad +   ( \xi\cdot \nabla_\xi)\big( \frac{  \chi_k(\xi )}{i x\cdot \xi - i t  |\xi|}   \big) \widehat{V}(t,\xi) \big] \Big] \varphi^l_{\bar{l}}(\xi, x,t) d\xi. \\ 
\end{split}
\ee

After first using the Cauchy-Schwarz inequality  and the equality \eqref{aug14eqn31},  then using   the measure estimate \eqref{measureestlemma} in  Lemma \ref{mainmeasureestlemma},  from \eqref{aug25eqn3} and \eqref{2025July29eqn31}, we have  
\be\label{2025July29eqn43}
\begin{split}
\sum_{l\in (\bar{l},2]\cap \Z}\big| J^1_{l;\bar{l}}(t, x)\big|
&\lesssim \sum_{l\in (\bar{l},2]\cap \Z} (t-|x|)^{-1}2^{-k} 2^{-m-(k+l)} 2^{(3k+l)/2}  \big(\|  \widehat{V^S} (t,\xi)   \|_{L^2}\\ 
&\quad+ \|    \widehat{V}(t,\xi)     \|_{L^2} + \| \xi\cdot\nabla_\xi \widehat{V^S} (t,\xi)   \|_{L^2}+ \|  \xi\cdot\nabla_\xi   \widehat{V}(t,\xi)     \|_{L^2} \big) \\ 
&\lesssim |t|^{-1/2}  (t-|x|)^{-1}\big[ \|  \pr \Gamma^{\leq 2}\phi (t,x)\|_{L^2_x} + |t|  \|\square \Gamma^{\leq 1}\phi (t,x)\|_{L^2_x} \big]. \\
\end{split}
\ee
After combining the obtained estimates \eqref{2025July29eqn23} and \eqref{2025July29eqn43}, we conclude
\be\label{2025July29eqn51}
\begin{split}
\sum_{l\in (\bar{l},2]\cap \Z}\big| J_{l;\bar{l}}(t, x)\big|&
\lesssim2^{-\delta |k|} \big[ |t|^{-1/2} \big(\langle |t|-|x|\rangle ^{-1+\delta} \mathbf{1}_{|x|\leq t} +\mathbf{1}_{|x| > t}  \big)    \| \pr \Gamma^{\leq 3}\phi (t,x)\|_{L^2_x}\\ 
&\quad + |t|^{1/2}  \| \square \Gamma^{\leq 2} \phi (t,x)\|_{L^2_x}\big].
\end{split}
\ee
Recall \eqref{2025July29eqn1}  and  \eqref{decomgeneralI}. To sum up, from the obtained estimate  \eqref{2025July29eqn13} and \eqref{2025July29eqn51},   {in view of \eqref{2025July29eqn1} and  \eqref{decomgeneralI}},  we conclude 
\[
  |\pr \phi(t,x)| \lesssim |t|^{-1/2} \big(\langle |t|-|x|\rangle ^{-1+\delta} \mathbf{1}_{|x|\leq t} +\mathbf{1}_{|x| > t}  \big)    \| \pr \Gamma^{\leq 3}\phi (t,x)\|_{L^2_x}+ |t|^{1/2}  \| \square \Gamma^{\leq 2} \phi (t,x)\|_{L^2_x}.
\]
Hence finishing the proof of the desired estimate \eqref{2025July23eqn10}. 
 

\subsection{Proof of \eqref{2025July23eqn11}}

  
For consistency, we maintain the notation established in the previous subsection. The key distinction between the proofs of \eqref{2025July23eqn10} and \eqref{2025July23eqn11} is that we now treat certain terms in $L^\infty_\xi$ space rather than $L^2_\xi$.
 
 Again we use the decomposition \eqref{decomgeneralI} for $P_k(U) (t, x) $. Based on the size of $l$, we proceed in two steps as follows.

\vo 

\textbf{Step 1.}\quad If $l=\bar{l}$. 

\vo 

After using   the measure estimate \eqref{measureestlemma} in  Lemma \ref{mainmeasureestlemma},   {for $t\in[ 2^{m-1}, 2^m]$},   we have
\be\label{2025July29eqn56}
\big|J_{\bar{l};\bar{l}}(t, x)\big|\lesssim 2^{3k+\bar{l}}  \| \widehat{V}(t,\xi)   \|_{L^\infty_\xi }\les { 2^{2k-m} \| \widehat{\pr \phi}(t,\xi)\|_{L^\infty_\xi}}
\lesssim |t|^{-1}2^{2k} \| \widehat{\pr \phi}(t,\xi)\|_{L^\infty_\xi}.
\ee 

\vo 

\textbf{Step 2.}\quad  If $l\in (\bar{l},2]\cap\Z$. 

\vo 

For this case, we first do integration by parts in $\xi$ once and then use the decomposition in \eqref{2025July29eqn22}. For $J^2_{\bar{l};\bar{l}}(t, x)$, see \eqref{2025July29eqn22}, after using the Cauchy-Schwartz inequality and   the measure estimate \eqref{measureestlemma} in  Lemma \ref{mainmeasureestlemma},   we have 
\be\label{2025July29eqn66}
\sum_{l\in (\bar{l},2]\cap \Z}\big| J^2_{l;\bar{l}}(t, x)\big|
 \lesssim \sum_{l\in (\bar{l},2]\cap \Z} 2^{-m-(k+l)} 2^{(3k+l)/2} |t|\|  \widehat{\square \phi} (t,\xi)   \|_{L^2}\lesssim |t|^{1/2} 2^k \|   {\square \phi}     \|_{L^2}. \\  
\ee

For $J^1_{\bar{l};\bar{l}}(t, x)$,  see \eqref{2025July29eqn22}, we do integration by parts in $\xi$ again. As a result, we yield the same formula as in \eqref{2025July29eqn63}. Moreover, we decompose $J^1_{\bar{l};\bar{l}}(t, x)$ in \eqref{2025July29eqn63} into three pieces as follows,
\be\label{2025July30eqn21}
J^1_{\bar{l};\bar{l}}(t, x)  = J^{1;1}_{\bar{l};\bar{l}}(t, x)+ J^{1;2}_{\bar{l};\bar{l}}(t, x)+ J^{1;3}_{\bar{l};\bar{l}}(t, x),
\ee  
where
\be\label{2025July29eqn691}
\begin{split}
J^{1;1}_{\bar{l};\bar{l}}(t, x)& =  - \int_{\R^3} e^{i x\cdot \xi - i  t  |\xi|}\Big[\widehat{V^S}(t, \xi) (3+ \xi\cdot \nabla_\xi)\big( \frac{\chi_k(\xi )}{(i x\cdot \xi - i t |\xi|)^2} \big)  \\ 
&\quad +    (3+ \xi\cdot \nabla_\xi)\big(  \frac{  1}{i x\cdot \xi - i t  |\xi|}  ( \xi\cdot \nabla_\xi)\big( \frac{  \chi_k(\xi )}{i x\cdot \xi - i t  |\xi|}   \big)\big) \widehat{V}(t,\xi) \big] \Big] \varphi^l_{\bar{l}}(\xi, x,t) d\xi, \\
\end{split}
\ee
\be\label{2025July31eqn1}
\begin{split}
J^{1;2}_{\bar{l};\bar{l}}(t, x)& =  - \int_{\R^3} e^{i x\cdot \xi - i  t  |\xi|}\Big[(\xi\cdot \nabla_\xi-t\p_t)\widehat{V^S}(t, \xi)   \big( \frac{\chi_k(\xi )}{(i x\cdot \xi - i t |\xi|)^2} \big)  \\ 
&\quad +    (\xi\cdot \nabla_\xi-t\p_t)\widehat{V}(t,\xi) \big(  \frac{  1}{i x\cdot \xi - i t  |\xi|}  ( \xi\cdot \nabla_\xi)\big( \frac{  \chi_k(\xi )}{i x\cdot \xi - i t  |\xi|}   \big)\big)  \big] \Big] \varphi^l_{\bar{l}}(\xi, x,t) d\xi, \\  
\end{split}
\ee
\be\label{2025July31eqn2}
\begin{split}
J^{1;3}_{\bar{l};\bar{l}}(t, x)& =  - \int_{\R^3} e^{i x\cdot \xi - i  t  |\xi|}\Big[ t\p_t\widehat{V^S}(t, \xi)   \big( \frac{\chi_k(\xi )}{(i x\cdot \xi - i t |\xi|)^2} \big)  \\ 
&\quad +   t\p_t\widehat{V}(t,\xi) \big(  \frac{  1}{i x\cdot \xi - i t  |\xi|}  ( \xi\cdot \nabla_\xi)\big( \frac{  \chi_k(\xi )}{i x\cdot \xi - i t  |\xi|}   \big)\big)  \big] \Big] \varphi^l_{\bar{l}}(\xi, x,t) d\xi. \\  
\end{split}
\ee
After using the Cauchy-Schwartz inequality and   the measure estimate \eqref{measureestlemma} in  Lemma \ref{mainmeasureestlemma},   we have 
\be\label{2025July29eqn73}
\begin{split}
\sum_{l\in (\bar{l},2]\cap \Z}\big| J^{1;3}_{l;\bar{l}}(t, x)\big|&
 \lesssim \sum_{l\in (\bar{l},2]\cap \Z} 2^{-2m-2(k+l)} 2^{(3k+l)/2} |t|\|  \widehat{\square\Gamma^{\leq 1} \phi} (t,\xi)   \|_{L^2}\\ 
 &\lesssim |t|^{1/2} 2^k \|   {\square \Gamma^{\leq 1} \phi}     \|_{L^2}. 
 \end{split}  
\ee
For $J^{1;1}_{\bar{l};\bar{l}}(t, x)$ and $J^{1;2}_{\bar{l};\bar{l}}(t, x)$, we use the full volume of the localized set by putting in $\widehat{V}(t,\xi)$ etc  in  $L^\infty_\xi$. From  the measure estimate \eqref{measureestlemma} in  Lemma \ref{mainmeasureestlemma},   we have
\be\label{2025July29eqn78}
\begin{split}
\sum_{l\in (\bar{l},2]\cap \Z}\big| J^{1;1}_{l;\bar{l}}(t, x)\big| + \big| J^{1;2}_{l;\bar{l}}(t, x)\big| &
 \lesssim \sum_{l\in (\bar{l},2]\cap \Z} 2^{-2m-2(k+l)} 2^{(3k+l)}  \|  \widehat{\pr \Gamma^{\leq 2} \phi} (t,\xi)   \|_{L^\infty_\xi}\\ 
 &\lesssim  |t|^{-1}2^{2k} \| \widehat{\pr  \Gamma^{\leq 2} \phi}(t,\xi)\|_{L^\infty_\xi}.
 \end{split}  
\ee 

Recall the decompositions in \eqref{decomgeneralI}, \eqref{2025July29eqn22}, and \eqref{2025July30eqn21}. To sum up, after combining the obtained estimates \eqref{2025July29eqn56}, \eqref{2025July29eqn66}, \eqref{2025July29eqn73}, and \eqref{2025July29eqn78}, we have
\be\label{2025Aug16eqn1}
  |P_k(U) (t, x)| \lesssim|t|^{-1}2^{2k} \| \widehat{\pr  \Gamma^{\leq 2} \phi}(t,\xi)\|_{L^\infty_\xi}+  |t|^{1/2} 2^k \|   {\square \Gamma^{\leq 1} \phi}     \|_{L^2}.
\ee
Recall \eqref{aug14eqn32}.  In view of \eqref{2025July29eqn1},  our  desired estimate \eqref{2025July23eqn11} follows from  the estimate \eqref{2025Aug16eqn1} obtained above.

  \section{Proof of Theorem \ref{Thm:main3}}  
  To prove our main theorem, we use the standard bootstrap argument. Firstly, $\forall i\in \{1,\cdots, m\},$ we define  
\[
\begin{split}
 E_i(t)&:= \|   \pr \Gamma^{\leq 30} \phi_i\|_{L^2}, \quad 
N^{high}_i(t) := \|   \pr \Gamma^{\leq 30} N_i\|_{L^2}, \quad N^{low}_i(t) :=  \|   \pr \Gamma^{\leq 25} N_i\|_{L^2}.  \\ 
\end{split}
\]
 We  make the following bootstrap assumption
\be\label{bootstrapfirst}
\sup_{t\in [0,T]} \sum_{i\in \{1,\cdots, m\}} E_i(t) +  \langle t \rangle^{3/2-2\delta } N^{high}_i(t)+   \langle t \rangle^{2-9\delta } N^{low}_i(t)\leq \epsilon_1:=\epsilon_0^{3/4}. 
\ee
From the decay estimate \eqref{finalestphysicspace} in Theorem  \ref{Thm:main1}, we have
\be\label{2025June14eqn61} 
\begin{split}
  |\pr\Gamma^{\leq 20} \phi_i(t,x)|&  \lesssim \langle t\rangle^{-(1-\delta)/2} \langle t-r_i\rangle^{-1/2+\delta} \epsilon_1+ \langle t \rangle^{-3/2+10\delta}\epsilon_1,\\ 
  |\pr\Gamma^{\leq 25} \phi_i(t,x)| & \lesssim \langle t\rangle^{-(1-\delta)/2 } \langle t-r_i\rangle^{-1/2+\delta} \epsilon_1+ \langle t \rangle^{-1+3\delta}\epsilon_1.\\ 
  \end{split}
\ee

With the above preparation, we are ready to close the bootstrap argument. We proceed in three steps as follows. 

\vo 

\textbf{Step 1.} The estimate of $N^{high}_i(t)$. 

\vo

Note that, from the separation assumption, i.e., \eqref{2025June14eqn41}, the following estimate always holds, 
\[
 \forall x\in \R^3,  i,j\in \{1,\cdots, m\}, i\neq j, \quad   {\langle t-r_i\rangle}^{-1}    {\langle t-r_j\rangle}^{-1} \lesssim \langle t\rangle^{-1}. 
\]
Let $0< \delta \ll 1$.  From  the no self-interaction assumption,   the above estimate,  the   first decay estimate in \eqref{2025June14eqn61},   { and estimating the nonlinear terms  in such a way that  the wave with most vector fields is taken in  $L^2$ and the other two waves in $L^\infty$},  we have
\be\label{2025June17eqn21}
\sum_{i\in \{1,\cdots, m\}}  \|   \pr \Gamma^{\leq 30} N_i\|_{L^2} \lesssim \epsilon_1^3  \langle t \rangle^{-3/2+2\delta}\lesssim  \langle t \rangle^{-3/2+2\delta}\epsilon_0^2.
\ee

\vo 

\textbf{Step 2.}   The estimate of $E_i(t)$.

\vo

From the obtained estimate \eqref{2025June17eqn21} and the standard energy estimate, we conclude
\be\label{2025July29eqn101}
  \sum_{i\in \{1,\cdots, m\}}  E_i(t)^2 \leq  \sum_{i\in \{1,\cdots, m\}}  E_i(0)^2 + \int_0^t \langle s \rangle^{-3/2+2\delta}\epsilon_1\epsilon_0^2 ds \lesssim \epsilon_0^2.
\ee

\vo 

\textbf{Step 3.} The estimate of $N^{low}_i(t)$.

\vo

The estimates for $N^{low}_i(t)$ and $N^{high}_i(t)$ differ significantly in their treatment of the wave with the most vector fields. For $N^{low}_i(t)$, this wave can be placed in $L^\infty$ due to a strategically chosen gap between the total number of derivatives used in the estimate and the maximum number of derivatives propagated. This is not possible for $N^{high}_i(t)$.

We divide the whole space $\R^n$ into $N+1$-regions as follows,
\[
  \R^3:= \cup_{i=0}^{N} R_i(t), \quad \forall i\in\{1,\cdots, N\},\quad R_i(t):=\{x: |x-r_i|\leq c t \}, \quad R_0:=(\cup_{i=1}^N R_i(t))^c,
\]
where the sufficiently small absolute constant $c$ only depends the fixed parameters $\{c_{i;j}, i\in\{1,\cdots, N\},j\in\{1,2,3\}\}$. In region $R_i(t)$, thanks to the separation condition \eqref{2025June14eqn41}, the decay rates of other waves,   {except  the one  corresponding to  $u_i$},  are improved, see the second estimate in \eqref{2025June14eqn61}. In region  $R_0(t)$, the decay rates of all waves   are improved. Therefore, thanks to the no-self interaction condition, in region $R_i(t)$, after putting any two waves other than  { the one corresponding to   $u_i$} in $L^\infty$ and the other one in $L^2$ regardless how many vector fields they have, from the second estimate in \eqref{2025June14eqn61}, we have
\be\label{2025June17eqn25}
  \sum_{i\in \{1,\cdots, m\}}  \|   \pr \Gamma^{\leq 25} N_i\|_{L^2} \lesssim \epsilon_1^3  \langle t \rangle^{-1+3\delta} \langle t \rangle^{-1+3\delta}\lesssim  \langle t \rangle^{-2+6\delta}\epsilon_0^2.
\ee
 {This improves  the  bootstrap argument and thus  end the proof of the theorem. } The scattering property follows from the corollary of the energy estimate, see \eqref{2025July29eqn101}. 

\section{Proof of Theorem \ref{Thm:main4}}
To prove our main theorem, we use the standard bootstrap argument, which     mainly relies on the decay estimate obtained in \eqref{2025July23eqn11} in Theorem \ref{Thm:main3}.  For $i\in \{1,\cdots, m\},$ we define   
\[
E_i(t):= \|   \pr \Gamma^{\leq 30}\phi_i\|_{L^2_x}, \quad W_i(t):= \|  \widehat{ \pr \Gamma^{\leq 30}\phi_i}\|_{L^\infty_\xi}.
\]
Moreover,  we define the $L^\infty_x$-type norm,
\[
K_i(t):=  \|   \pr \Gamma^{\leq 20}\phi_i\|_{L^\infty_x}.
\]

Let $0< \delta \ll 1$. We  make the following bootstrap assumption
\be\label{Bootstraptheorem2}
\sup_{t\in[0,T]} \sum_{i=1,\cdots, m} E_i(t) +  \langle t \rangle^{-\delta} W_i(t)+ \langle  t\rangle^{1-\delta} K_i(t)\leq \epsilon_1:=\epsilon_0^{3/4}. 
\ee

To close the bootstrap argument, we proceed in three steps as follows.

\vo 

\textbf{Step 1.}\quad {Energy estimate.}

\vo 

Since our nonlinearity is semilinear and cubic, from the $L^2_x-L^\infty_x-L^\infty_x$-type trilinear estimate and the energy estimate \eqref{energyeestmain}, we have 
 \be\label{aug15eqn72}
\forall i\in\{1,\cdots, m\},\quad E_i(t)-E_i(0)\lesssim \int_{0}^t \langle s\rangle^{-2+2\delta} \epsilon_1^3 d s\lesssim \epsilon_0.
\ee
 
\vo 

\textbf{Step 2.}\quad  {$L^\infty_\xi$ estimate}

\vo

From the Duhamel's formula and  the $L^2_x-L^2_x-L^\infty_x$-type trilinear estimate, we have
 \be\label{aug15eqn71}
 \begin{split}
  W_i(t)=  \|  \widehat{ \pr \Gamma^{\leq 30}\phi_i}(t,\xi) \|_{L^\infty_\xi}&\lesssim \epsilon_0 + \int_{0}^t \|  \, ^{(i)} \square\Gamma^{\leq 30}\phi_i \|_{L^1_x}d s \lesssim  \epsilon_0 + \int_{0}^t \langle s\rangle^{-1+\delta}\epsilon_1^3 d s\lesssim \langle t \rangle^{\delta}\epsilon_0.\\ 
 \end{split}
 \ee

\vo 

\textbf{Step 3.}\quad  {$L^\infty_x$-decay estimate}

\vo

Note that, from the $L^2_x-L^\infty_x-L^\infty_x$-type trilinear estimate, we have
\[
\forall t\in [0,T],\qquad   \|  \, ^{(i)} \square\Gamma^{\leq 30}\phi_i(t,x) \|_{L^2_x} \lesssim \langle t\rangle^{-2+2\delta}\epsilon_1^3.
\]
Therefore, from the above estimate, the improved energy estimate \eqref{aug15eqn72}, and the improved $W_a(t)$-estimate \eqref{aug15eqn71}, and  the decay estimate obtained in \eqref{2025July23eqn11} in Theorem \ref{Thm:main3}, we have
\[
K_a(t)\lesssim \langle t\rangle^{-1+\delta}\epsilon_0 + \langle t\rangle^{-3/2+2\delta}\epsilon_0 \lesssim \langle t\rangle^{-1+\delta}\epsilon_0. 
\]
Now, it's clear that the bootstrap assumption in \eqref{Bootstraptheorem2} is improved. The scattering property follows from the corollary of the energy estimate, see \eqref{aug15eqn72}.  Therefore, finishing the proof of Theorem \ref{Thm:main4}.

\end{document}